\documentclass[10pt,oneside]{amsart} %added oneside so comments stay on the right, can remove when finished

\usepackage{graphicx} % Required for inserting images

\usepackage{amsmath,amstext,amscd, amssymb,txfonts, epsfig, color, epstopdf}
\usepackage{pinlabel}
\usepackage[normalem]{ulem}
\usepackage[colorlinks=true, pdfstartview=FitV, linkcolor=blue, citecolor=blue, urlcolor=blue]{hyperref}
 \usepackage[abs]{overpic}
\usepackage{xcolor,varwidth}
\usepackage{enumerate}
\usepackage{tabularx}
\usepackage{tikz}
 \usepackage{enumitem}
 \usepackage{csquotes}
\makeatletter
\def\@tocline#1#2#3#4#5#6#7{\relax
\ifnum #1>\c@tocdepth % then omitf
  \else 
    \par \addpenalty\@secpenalty\addvspace{#2}% 
\begingroup \hyphenpenalty\@M
    \@ifempty{#4}{%
      \@tempdima\csname r@tocindent\number#1\endcsname\relax
 }{%
   \@tempdima#4\relax
 }%
 \parindent\z@ \leftskip#3\relax \advance\leftskip\@tempdima\relax
 \rightskip\@pnumwidth plus4em \parfillskip-\@pnumwidth
 #5\leavevmode\hskip-\@tempdima #6\nobreak\relax
 \ifnum#1<0\hfill\else\dotfill\fi\hbox to\@pnumwidth{\@tocpagenum{#7}}\par
 \nobreak
 \endgroup
  \fi}
\makeatother
\let\oldtocsection=\tocsection
\let\oldtocsubsection=\tocsubsection
\let\oldtocsubsubsection=\tocsubsubsection
\renewcommand{\tocsection}[2]{\hspace{0em}\oldtocsection{#1}{#2}}
\renewcommand{\tocsubsection}[2]{\hspace{1em}\oldtocsubsection{#1}{#2}}
\renewcommand{\tocsubsubsection}[2]{\hspace{2em}\oldtocsubsubsection{#1}{#2}}
\definecolor{cerulean}{rgb}{0,.48,.65} 
\definecolor{magenta}{rgb}{.5,0,.5} 
\definecolor{dred}{rgb}{.5,0,0} 
\definecolor{green}{rgb}{0,.5,0} 
\definecolor{blue}{rgb}{0,0,1} 
\definecolor{black}{rgb}{0,0,0} 
\definecolor{dgreen}{rgb}{0,.3,0} 
\definecolor{vdred}{rgb}{.3,0,0} 
\definecolor{red}{rgb}{1,0,0} 
\definecolor{salmon}{rgb}{0.98,0.50,0.45} 
\definecolor{gray}{rgb}{.5,.5,.5} 
\definecolor{seagreen}{rgb}{0.13,0.70,0.67} 
\definecolor{chartreuse}{rgb}{0.40,0.80,0.00}
\definecolor{cornflower}{rgb}{0.39,0.58,0.93} 
\definecolor{gold}{rgb}{0.80,0.68,0.00}
 
\theoremstyle{plain}
 \newtheorem{MainThm}{Theorem}

\newtheorem{thm}{Theorem}[section]
\newtheorem{lemma}[thm]{Lemma}
\newtheorem{claim}[thm]{Claim}

\newtheorem{cor}[thm]{Corollary}
\newtheorem{prop}[thm]{Proposition}

\newtheorem{Conjecture}[thm]{Conjecture}

\theoremstyle{definition}
\newtheorem{rem}[thm]{Remark}
\newtheorem{defn}[thm]{Definition}

\newtheorem{fact}[thm]{Fact}

\newtheorem{Open questions}[thm]{Open questions}
\newtheorem{Open question}[thm]{Open question}
\newtheorem{Open problems}[thm]{Open problems}
\newtheorem{Open problem}[thm]{Open problem}

\def\ni{\noindent}

\def\Dist{\hbox{\rm Dist}}

\def\CL{\hbox{\rm CL}}

\def\F+L{\hbox{$\textup{F}\!_+\textup{L}$}}

\newcommand{\BS}{\mathrm{BS}}

\def\onto{{\kern3pt\to\kern-8pt\to\kern3pt}}

\def\<{\langle}
\def\>{\rangle}
\def\|{{\ |\ }}

\def\a{\alpha}
\def\b{\beta}

\newcommand{\abs}[1]{\left|#1\right|}
\renewcommand{\ni}{\noindent}

\def\*{^{\star}}

\newcommand{\NN}{\mathbb{N}}

\newcommand{\ZZ}{\mathbb{Z}}
\newcommand{\lcm}{\text{lcm}}
\newcommand{\del}{\partial}

\title{Conjugator Length in  Iterated Cyclic Amalgams of Free Groups}
\author{Conan Gillis}
\date{September 2026}

\begin{document}

\maketitle
\begin{abstract}
Let $\mathcal{A}_0$ be the set of finitely generated free groups on some countable alphabet and and define $\mathcal{A}_r$ inductively as the union of $\mathcal{A}_{r-1}$ with the set of amalgamated free products $G_1\ast_{\ZZ}G_2$, where $G_1,G_2\in \mathcal{A}_{r-1}$. We show, for all $r$, that every element of $\mathcal{A}_r$ has linear conjugator length, quantifying a result of Larsen and extending a result of Kharlampovich and Myasnikov. This is achieved by a linear conjugator length bound on a family of iterated HNN extensions of free groups. Also, in an appendix, we adapt a proof of A. Wei$\ss$ to show that all Generalized Baumslag-Solitar Groups have linear conjugator length. As a consequence, we obtain that one-relator groups with non-trivial center have linear conjugator length, giving evidence for a conjecture of Bridson, Riley, and Sale.
\end{abstract}

\section{Introduction} Much recent work has focused on quantifying the difficulty of Max Dehn's three famous problems for specific finitely presented groups -- the word and conjugacy problems have received particular attention due (in part) to their potential applications in public-key cryptography. These problems can be studied in terms of their ``external" difficulty, e.g. algorithmic complexity, as well as the ``internal" difficulty. The later approach uses a group's intrinsic geometry to quantify the complexity of solutions to the search-variants of Dehn's questions; the word problem for a group $G$, for example, is quantified by its smallest isoperimetric function, otherwise known as its Dehn function (see \cite{Bridson6}). The present paper considers the conjugator length function, which quantifies the conjugacy problem and to which we presently turn.

\begin{defn}
    Let $G=\langle X\mid R\rangle$ be a finitely presented group, and $|g|=\min\{n: g=g_1g_2\cdots g_n\text{ for }g_i\in X^{\pm1}\}$. For each pair of conjugate elements $u,v\in G$, we define $$c(u,v)=\min\{|\gamma|:\gamma u\gamma^{-1}=v\}$$ and the \textit{conjugator length function} $\CL_G$ by $$\CL_G(n)=\max c(u,v),$$ where the maximum is taken over all conjugate pairs $u,v$ such that $|u|+|v|\leq n$. 
\end{defn}

Of course, all three of $|\cdot|,c(\cdot,\cdot)$, and $\CL_G(\cdot)$ depend on the presentation chosen, however it is easily shown that the conjugator length function for $G$ is well defined up to a standard equivalence relation $\simeq$ (see Definition \ref{equivRel}).

Our first result concerns certain iterated HNN extensions of free groups. We adopt, once and for all, the convention that all cyclic groups are infinite, and thus isomorphic to $\ZZ$.
\begin{MainThm}\label{SecondMainTheorem}
    Suppose $G$ has a presentation of the form $\langle \Lambda_0, t_1,\ldots, t_M\mid t_ix_it_i^{-1}=y_i\rangle$ where \begin{enumerate} \item\label{Introxiyirelators}$\Lambda_0$ is a finite set and $x_i,y_i$ are words on $\Lambda_0\cup\{t_1,\ldots, t_{i-1}\}$ not representing the identity,  \item\label{Introlambdafree}  $\langle\Lambda_0\rangle_{G}$ is free with free basis $\Lambda_0$, and \item\label{IntronoDistortion} $\langle x_i\rangle_{G},\langle y_i\rangle_{G}$ are both undistorted in $G$.\end{enumerate} Then $\CL_G(n)\simeq n$.
\end{MainThm}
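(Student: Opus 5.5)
Since $G$ is infinite, the lower bound $\CL_G(n)\succeq n$ is standard: conjugating a fixed nontrivial element by powers of a suitable element forces conjugators of linear length. So the work is the upper bound $\CL_G(n)\preceq n$, which I would prove by induction on $M$. Write $G_i=\langle \Lambda_0,t_1,\ldots,t_i\rangle$. Each $G_i$ is an HNN extension of $G_{i-1}$ along the cyclic subgroups $\langle x_i\rangle,\langle y_i\rangle$, with stable letter $t_i$. The base case $G_0=F(\Lambda_0)$ is free, and free groups have linear conjugator length. For the inductive step I would first show that hypothesis (\ref{IntronoDistortion}) passes to each $G_i$. The retraction-type structure of iterated HNN extensions should give that $G_{i}$ is undistorted in $G_{i+1}$, because a path in the Bass--Serre tree of $G_{i+1}$ which leaves and returns to the vertex $G_i$ crosses edges labelled by cyclic groups. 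Those cyclic groups are undistorted by hypothesis, so shortcuts through $t$-letters can only save a bounded factor. Consequently $\langle x_i\rangle,\langle y_i\rangle$ are undistorted in $G_{i-1}$, and by induction $\CL_{G_{i-1}}(n)\preceq n$.

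Now fix $u,v\in G=G_M$ with $|u|+|v|\le n$, and write $H=G_{M-1}$ and $t=t_M$. I would put $u,v$ into cyclically reduced form with respect to the HNN structure (Britton's lemma / Collins' lemma) and split into two cases.

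\emph{Hyperbolic case.} Here $u$ has positive $t$-length after cyclic reduction. Collins' lemma says a conjugator is, up to a cyclic permutation of the $t$-syllables of $v$, an element of an edge group $\langle x_M\rangle$ or $\langle y_M\rangle$. Cyclic reduction and permutation cost $O(n)$ by undistortion of $H$. The resulting edge-group element $c=x^k$ satisfies $x^k u' x^{-k}=v'$. Expressing $u'$ as $h_0t^{\epsilon_1}h_1\cdots$, this forces a chain of equalities of the form $x^{k}h_0 = h_0' x^{k'}$ in $H$. If $|k|$ were large, the cyclic subgroups $\langle x\rangle$ and $h_0^{-1}\langle x\rangle h_0$ would share long fellow-travelling segments. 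I would argue that either $k$ is bounded linearly in $n$, or $u'$ commutes with a power of $x$, in which case $k$ can be reduced modulo that centralizer. Undistortion converts the bound on $|k|$ into a word-length bound.

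\emph{Elliptic case.} Here $u,v$ are conjugate into $H$, and I expect this case to be the main obstacle. A minimal conjugator may pass through a sequence of edge groups: $u=h_0$, $t^{\pm1}$ conjugates $x^{a}$ to $y^{a}$, and so on. This gives a path in the Bass--Serre tree whose length $\ell$ must be bounded by $O(n)$. Each hop preserves the exponent $a$, and $|x^a|\asymp |a|$ by undistortion, so I would control $\ell$ via the lengths of the intermediate elements. The danger is that $\ell$ could be long while the powers of $x$ and $y$ stay short. This can only happen when the path runs through the same vertex pattern repeatedly, which occurs when $x$ is conjugate in $H$ to $y^{\pm1}$ (a Baumslag--Solitar-like or torus-like loop). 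In that case I would shortcut the loop using the centralizer of $u$. Centralizers of nontrivial elements in $H$ should be controlled; I would prove this inductively, showing that centralizers are virtually $\mathbb{Z}$ or $\mathbb{Z}^2$-like with undistorted generators. This lets one replace a long conjugator by one of linear length. Each intermediate conjugation inside $H$ then uses the inductive linear bound, with inputs of length $O(n)$. The accumulation over $O(n)$ hops must be avoided by the loop-shortening argument, and making that uniform is the step I expect to require the most care.
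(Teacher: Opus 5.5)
Your overall architecture (induction on $M$, Collins' lemma, and a split into the cases where $u$ is hyperbolic or elliptic for the top stable letter) matches the paper's split into radial and non-contractible corridors. However, neither substantive step is carried out, and the justifications you sketch do not work in this generality. The difficulty also sits in the opposite case from the one you expect.

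\textbf{Hyperbolic case.} Everything hinges on bounding $|k|$ in $x^ku'x^{-k}=v'$, i.e.\ on the chain of equations $g_i^{-1}c_i^{k_i}h_i=d_i^{k_{i+1}}$ in $H=G_{M-1}$. Your dichotomy ``either $|k|\le Cn$ or $u'$ commutes with a power of $x$'' is exactly what has to be proved. You would also need the generator of $\{k: x^k\in C(u')\}$ to be $O(n)$. The fellow-travelling step relies on hyperbolicity (the Morse lemma), and $H$ need not be hyperbolic. For example, $\langle a,t_1\mid t_1at_1^{-1}=a\rangle\cong\ZZ^2$ and $\langle a,b,t_1,t_2\mid t_1at_1^{-1}=a,\ t_2at_2^{-1}=a\rangle\cong(\ZZ\times F_2)\ast\ZZ$ satisfy all the hypotheses, so they can occur as $H$. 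There, $\langle x\rangle$ and $h_0'\langle x\rangle$ need not fellow-travel at all. Even when they stay $O(n)$-close, pigeonholing over a ball of radius $O(n)$ only gives an exponential bound on $|k|$.

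The paper gets the linear bound by working in the strips $\Delta_k$ between consecutive radial corridors. Lower-index corridors crossing a strip (``bars'') cut it into pieces with periodic labels $(\alpha_k')^L,(\beta_k')^L$. Three consecutive top bars of lengths $p,q,r$ then give $x_{r_k}^{r-q}=\gamma_1^{-1}x_{r_k}^{q-p}\gamma_1$. The ingredient you are missing is that undistortion of $\langle x_i\rangle$ forces $|p|=|q|$ whenever $x_i^p$ and $x_i^q$ are conjugate. Indeed, if $\gamma x_i^q\gamma^{-1}=x_i^p$ with $|p|>|q|$, iterate to $\gamma^k x_i^{q^k}\gamma^{-k}=x_i^{p^k}$ and compare $|p|^k$ with $2k|\gamma|+|q|^k|x_i|$. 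Hence $|r-q|=|q-p|$, and there are two outcomes:
\begin{enumerate}
\item the bar lengths form a nonconstant arithmetic progression, which bounds the number of bars linearly; or
\item they are constant or alternating, so two loops around the annulus carry the same label, and the sub-annulus between them can be excised, shortening every radial corridor.
\end{enumerate}
When a strip contains no $t$-letters, freeness of $\langle\Lambda_0\rangle$ supplies the periodicity instead.

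\textbf{Elliptic case.} The same fact is what this case needs, and it makes the centralizer analysis unnecessary. Exponents are preserved by the $t$-hops, but not a priori by conjugations inside $H$, which can send $y^a$ to $x^b$. Only in $G$, using $x^b\sim y^b$, do you get $|a|=|b|$. Once every edge-group element in the chain is one of $x^{\pm a},y^{\pm a}$ for a single $a$ (with $|a|=O(n)$, e.g.\ by comparing stable lengths with $u$), any repetition lets you delete the part of the conjugator between the two occurrences. So only boundedly many top-level hops survive, and each intervening conjugation in $H$ is $O(n)$ by induction. The paper does this diagrammatically in Case~1, via annular excision and Lemma~\ref{nonContractibleCorridorsExcision}. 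By contrast, your proposed route rests on a false claim. In $(\ZZ\times F_2)\ast\ZZ$ above, the centralizer of $a$ is $\ZZ\times F_2$, which is neither virtually $\ZZ$ nor virtually $\ZZ^2$.

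Separately, the lower bound $n\preceq\CL_G(n)$ is automatic from the definition of $\preceq$. The reason you give would fail for $G\cong\ZZ^2$, where the trivial conjugator always works.
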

Item \eqref{IntronoDistortion} is essential here: the Baumslag-Gersten group $BG=\langle a,s,t\mid sas^{-1}=a^2, tat^{-1}=s\rangle$ has a massively distorted subgroup, $\langle a\rangle_{BG}$ \cite{Gersten1992,platonov2004isoparametric}, and has conjugator length function  equivalent to a power tower of height $\lfloor \log_2 n\rfloor$ \cite{gillis2025conjugator}.

Theorem \ref{SecondMainTheorem} gives quantification of \cite[Theorem 2]{Larsen_1977} and of the main result of \cite{bezverkhnii2016conjugacy}, which is the main motivation of this paper. To set notation, following \cite{Larsen_1977}, let $\mathcal{A}_0$ be the set of finitely generated free groups with free basis contained in the set of letters $\Lambda=\{\lambda_1,\lambda_2,\ldots\}$, and define $\mathcal{A}_r$ inductively as the union of $\mathcal{A}_{r-1}$ with the set of amalgamated free products $G_1\ast_{C}G_2$, where $G_1,G_2\in \mathcal{A}_{r-1}$ and $C$ is a cyclic group embedding into $G_1$ and $G_2$. We will show (Proposition \ref{iterFundConjPowers}) that every element of $\mathcal{A}_r$ is a retract of a group $\mathcal{F}(G)$ satisfying Theorem \ref{SecondMainTheorem}, whence we obtain:

\begin{cor}\label{AkCor}  For all $r$ and all $G\in \mathcal{A}_r$, $\CL_G(n)\simeq n$.\end{cor}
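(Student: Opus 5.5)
The plan is to combine Theorem \ref{SecondMainTheorem} with the retraction statement announced as Proposition \ref{iterFundConjPowers}, together with the general fact that conjugator length does not increase under passing to retracts. Concretely, let $G\in\mathcal{A}_r$. By Proposition \ref{iterFundConjPowers} there is a group $\mathcal{F}(G)$ with a presentation of the form required in Theorem \ref{SecondMainTheorem}, and $G$ is a retract of it. So we have an inclusion $\iota:G\hookrightarrow \mathcal{F}(G)$ and a homomorphism $\rho:\mathcal{F}(G)\to G$ with $\rho\circ\iota=\mathrm{id}_G$. Theorem \ref{SecondMainTheorem} gives $\CL_{\mathcal{F}(G)}(n)\simeq n$.

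Next I prove the retract lemma: if $H$ is a finitely generated retract of a finitely generated group $K$, then $\CL_H\preceq \CL_K$. Fix finite generating sets, with the generating set of $K$ containing that of $H$; up to $\simeq$ this choice is harmless. Both $\iota$ and $\rho$ are then Lipschitz with respect to word length. In particular $|\iota(h)|_K\le |h|_H$, and $|\rho(k)|_H\le C|k|_K$ with $C$ the maximum length of the images of the generators of $K$.

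Now take $u,v\in H$ conjugate in $H$, with $|u|+|v|\le n$. They are also conjugate in $K$, and $|u|_K+|v|_K\le n$. Hence there is $\gamma\in K$ with $\gamma u\gamma^{-1}=v$ and $|\gamma|_K\le \CL_K(n)$. Applying $\rho$ gives $\rho(\gamma)u\rho(\gamma)^{-1}=v$ in $H$, and $|\rho(\gamma)|_H\le C\,\CL_K(n)$. Therefore $\CL_H(n)\le C\,\CL_K(n)$, which yields $\CL_G(n)\preceq n$.

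For the lower bound $n\preceq \CL_G(n)$ I use the standard argument for groups containing elements of infinite order, or a free group. If $G$ is nontrivial free of rank at least $2$, or contains a nonabelian free subgroup that is undistorted, then conjugating $a$ by $b^m$ requires length about $m$. More simply, when $G$ is not virtually abelian one can check that linear conjugator length is a lower bound, and the convention in Definition \ref{equivRel} likely already treats $n$ as the baseline. The degenerate cases are $G$ trivial or cyclic. There $\CL_G$ is bounded, and I will either interpret ``$\simeq n$'' as in the paper's convention or note that these cases are excluded or handled by that convention.

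The main obstacle is Proposition \ref{iterFundConjPowers} itself, but it is assumed here. So the only real content is the retract lemma, which is routine. The one subtle point is that conjugacy is checked inside $H$ while the conjugator is found in $K$. Retraction is precisely what transfers the conjugator back to $H$ without increasing its length by more than a constant factor.
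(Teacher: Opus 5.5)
Your argument matches the paper's: $G$ is a retract of $\mathcal{F}(G)$ (Remark \ref{CLRetract}), so $\CL_G\preceq\CL_{\mathcal{F}(G)}\simeq n$ by Proposition \ref{iterFundConjPowers} and Theorem \ref{SecondMainTheorem}, and your retract lemma supplies the routine justification the paper leaves implicit. For the lower bound you need none of the free-subgroup or degenerate-case discussion: under Definition \ref{equivRel}, $n\preceq f(n)$ holds for every $f$ because of the $+Cn+C$ term, so only the upper bound carries content.
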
  This Corollary presents an interesting contrast with the computations in  \cite{gillis2025conjugator}, which indicate that iterated cyclic HNN-extensions of free groups need not have linear conjugator length functions. A final note on this result: Kharlampovich and Myasnikov \cite{kharlampovich1998hyperbolic} showed that iterated cyclic amalgams of hyperbolic groups are hyperbolic if only if (among other conditions) all edge-groups are maximal in one of their corresponding vertex-groups -- our result extends this in the case where the base groups are free by showing that maximality is not necessary for the amalgams to have linear conjugator length function, even if hyperbolicity would be lost. Building on this, we make the following conjecture:

\begin{Conjecture}
All iterated cyclic amalgams of hyperbolic groups have linear conjugator length functions.\end{Conjecture}
The techniques of this paper could likely be generalized to answer this conjecture; indeed, replacing freeness with torsion-free hyperbolicity in Condition \ref{Introlambdafree} of Theorem \ref{SecondMainTheorem} and in Subsections \ref{IFGdef} and \ref{IFGRedux}, then showing Theorem \ref{SecondMainTheorem} still holds, would suffice for all torsion-free hyperbolic groups (torsion would add additional considerations). This would require additional tools from hyperbolic group theory, however, as we use freeness in a crucial way here.

In an appendix to this paper we also prove:

\begin{MainThm}\label{GBSMain}
    Let $\Gamma$ be a graph of cyclic groups, and let $G$ be the Generalized Baumslag-Solitar group defined by $\Gamma$. We have $\CL_G(n)\simeq n$.
\end{MainThm}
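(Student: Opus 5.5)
We would prove Theorem \ref{GBSMain} by following the strategy of Wei\ss\ for the conjugacy problem in Baumslag--Solitar and GBS groups, making each step quantitative. Fix a finite graph of groups $\Gamma$ with vertex groups $\langle a_v\rangle$, edge groups embedded via $a_{o(e)}^{p_e}$ and $a_{t(e)}^{q_e}$, a maximal tree $T$, and stable letters $t_e$ for $e\notin T$. The lower bound $\CL_G(n)\succeq n$ holds because $G$ is infinite and has elements of infinite order. So all the work is in the upper bound. We split conjugate pairs $u,v$ into \emph{elliptic} ones (conjugate into a vertex group) and \emph{hyperbolic} ones (acting on the Bass--Serre tree with an axis).

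For hyperbolic elements we use Britton's lemma and Collins' lemma. First we replace $u$ and $v$ by cyclically reduced forms. Reducing a word costs a conjugator of length linear in the word, provided pinches $t_e a^{kp_e} t_e^{-1}\to a^{kq_e}$ are performed on geodesic words. Some care is needed here, since vertex exponents can grow exponentially under pinching. To control this, we work with lengths measured in the Bass--Serre normal form, using that a cyclically reduced $u$ of syllable length $\ell$ determines a translation length $\ell$ in the tree. Collins' lemma then says a conjugator can be taken as a cyclic permutation of syllables times an element $c$ of an edge group $\langle a^{p}\rangle$ satisfying $c\,u'\,c^{-1}=v$. The key estimate is that $|c|$ is linearly bounded. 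For this, write $u'=g_0t_1g_1\cdots$ and look at the \emph{modular homomorphism} $\Delta:G\to\mathbb{Q}^\times$. If $\Delta(u)\neq\pm1$, the equation $c u' c^{-1}=v$ forces $c^{\Delta(u)-1}$ to equal a fixed element read off from $u'$ and $v$, so $|c|$ is bounded by the exponents appearing in $u,v$. Those exponents are at most exponential in $n$. But an element $a^N$ with $N\le C^n$ has word length $O(n)$ in a non-unimodular GBS group, and this is exactly what Wei\ss's argument exploits. If $\Delta(u)=\pm1$, then $c$ commutes with $u'$ up to finite ambiguity. Some power of $a$ then centralises $u'$, and we may take $|c|$ bounded by $|u|+|v|$ after reducing modulo the centraliser.

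For elliptic elements $u=g a^{m}g^{-1}$ and $v=h a_w^{k}h^{-1}$, we first conjugate each into a vertex group by a conjugator of linear length. This comes from the reduction process: a word representing an elliptic element has a prefix and suffix that cancel to leave a vertex element, and this prefix has length at most $|u|$. The problem then becomes: given $a_v^{m}$ and $a_w^{k}$ conjugate, find a short conjugator. Conjugates of vertex powers correspond to paths in $\Gamma$ along which the exponent is multiplied by $q_e/p_e$ at each step, with divisibility constraints. We show a conjugating path can be chosen of the following form: a reduced path in the finite graph, followed by a bounded number of loops traversed $O(\log|m|+\log|k|)$ times. Each loop multiplies the exponent by a fixed ratio, so an exponent of size at most $C^n$ needs at most $O(n)$ loop traversals. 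Hence the conjugator has length $O(n)$.

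The main obstacle is the unimodular/non-unimodular dichotomy in the hyperbolic case. When there are loops with ratio $\pm1$ mixed with non-unimodular ones, the edge-group element $c$ is not determined by a simple equation, and it lives in a subgroup that may be exponentially distorted. We would first try Wei\ss's reduction: pass to the finite-index subgroup where the modulus is positive, and use that the centre (when $\Delta\equiv1$) or the distortion of $\langle a\rangle$ (when $\Delta\not\equiv1$) compensates. In each case the goal is to show $|a^N|_G=O(\log N)$ or $O(N)$, matching how $N$ is bounded.
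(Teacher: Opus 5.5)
Your outline has the same shape as the paper's proof. You split into elliptic and hyperbolic classes. In the hyperbolic case you take a cyclic permutation followed by a vertex-group power whose exponent is linearly or exponentially bounded according to unimodularity; your modular-homomorphism equation for the edge-group element is a reasonable algebraic substitute for the paper's corridor argument. In the elliptic case you make a linear-length reduction to vertex powers and then use logarithmically many bounded conjugators.

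The genuine gap is the elliptic step. You assert that a conjugating path from $a_v^m$ to $a_w^k$ can be taken to be a reduced path followed by a bounded set of loops traversed $O(\log|m|+\log|k|)$ times. That assertion is the entire content of the elliptic case. The only reason you give is that each loop multiplies the exponent by a fixed ratio, so an exponent of size at most $C^n$ needs $O(n)$ traversals. This bounds only the net product of the ratios, not the number of traversals. Loops of ratio $\pm1$ do not change the size of the exponent at all. Moreover, the divisibility constraints can a priori force you to combine loops whose ratios cancel, for instance acquiring a prime factor of some $p_e$ in order to cross $e$ and shedding it later. Nothing in your argument prevents a path with small net ratio from needing unboundedly many traversals, and controlling exactly this is what makes conjugacy in GBS groups hard.

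The paper closes this with Wei\ss's semilinearity theorem.
\begin{itemize}
\item Encode $v^k$ by the vector $E(k)\in\NN^m$ of exponents of the finitely many primes dividing edge indices.
\item The set $R_{v,u}$ of pairs $(E(k),E(\ell))$ with $v^k$ conjugate to $u^\ell$ is semilinear, by Wei\ss\ together with Ginsburg--Spanier closure under intersection and projection. So it is a finite union of sets $c+\sum_j t_jp_j$ with $c=(\gamma,\gamma')$ and each $p_j=(\pi_j,\pi_j')\neq 0$.
\item The base point contributes one conjugator of bounded length.
\item By multiplicativity $N(f+g)=N(f)N(g)$, each period is realized by a single bounded-length conjugator from $u^{N(\gamma'+\pi_j)}$ to $u^{N(\gamma'+\pi_j')}$. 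That conjugator still works after both exponents are multiplied by any $N(h)$.
\item Since the $p_j$ are nonzero vectors of naturals, the number $\sum_j t_j$ of steps is at most the coordinate sum of $E(k)+E(\ell)$. This is $O(\log k+\log\ell)$, hence $O(n)$ because vertex groups are at most exponentially distorted.
\end{itemize}
It is this count by coordinate sums, rather than by the size of a ratio, that handles ratio-one and mutually cancelling loops.

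Separately, the obstacle you flag at the end of the hyperbolic case disappears once you prove the dichotomy the paper establishes first. If $G$ is not unimodular, every vertex group is exponentially distorted: there is a $\BS(n_1,n_2)$ subgroup with $|n_1|\neq|n_2|$, and all vertex groups are commensurable. Hence $|a_v^N|=O(\log N)$ for every $v$, even when $\Delta(u)=\pm1$. If $G$ is unimodular, all vertex groups are undistorted and the exponents stay linear. Distortion depends on $G$, not on $u$, so there is no mixed case. You do need to prove this dichotomy rather than assume it.
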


During the preparation of this paper, we initially showed Theorem \ref{GBSMain} for the case where $\Gamma$ has Betti-number 1 using the techniques developed below, however the author's work with a Large Language Model (Claude Fable) gave a much shorter and more general proof. Combined with Corollary \ref{AkCor} and \cite[Theorem 1]{Pietrowski1974}, Theorem \ref{GBSMain} (even restricted to the Betti-number-equals-one case) implies one of the main motivations of this paper:
\begin{cor}\label{OneRelCor}
    For all one-relator groups $G$ whose center is non-trivial, $\CL_G(n)\simeq n$. 
\end{cor}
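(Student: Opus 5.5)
The plan is to reduce to the two results already stated, using Pietrowski's classification \cite[Theorem 1]{Pietrowski1974} of one-relator groups with non-trivial center. That classification says such a group $G$ is isomorphic to one of two kinds of group. The first kind is a group of the form $\langle a,b\mid a^p=b^q\rangle$, possibly with an additional stable letter. Concretely, $G$ is either an amalgam $\langle a\rangle\ast_{a^p=b^q}\langle b\rangle$ of two infinite cyclic groups, or a one-relator group of the form $\langle a,t\mid ta^pt^{-1}=a^q\rangle$ with $p=\pm q$. The second kind is a group of the form $\langle a_1,\dots,a_k \mid a_1^{\alpha_1}=a_2^{\alpha_2}=\cdots\rangle$, built as an iterated cyclic amalgam along the central element. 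I will rederive exactly which list Pietrowski gives and check that every group in it falls into one of the following two cases.

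\emph{Case 1: $G$ is a Generalized Baumslag-Solitar group.} A one-relator group with non-trivial center splits as a graph of infinite cyclic groups. Typical examples are the torus-knot-type groups $\langle a,b\mid a^p=b^q\rangle$, which are a single edge with two cyclic vertices, and $\BS(p,\pm p)$, which is a single loop. Both are the fundamental group of a graph of cyclic groups with Betti number at most $1$. In this case Theorem \ref{GBSMain} (even its Betti-number-one special case) gives $\CL_G(n)\simeq n$ directly.

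\emph{Case 2: $G$ is an iterated amalgam of free groups over cyclic subgroups.} Here I would exhibit $G$ as an element of $\mathcal{A}_r$ for some $r$ and apply Corollary \ref{AkCor}. For instance, $\langle a\rangle\ast_{a^p=b^q}\langle b\rangle$ lies in $\mathcal{A}_1$, since $\langle a\rangle$ and $\langle b\rangle$ are free groups in $\mathcal{A}_0$ and the amalgamated subgroup is cyclic. Any further amalgamations along the centre are handled by induction on the number of factors, which raises the level $r$ by one at each step.

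Since $\CL$ is an isomorphism invariant up to $\simeq$, this finishes the argument. The main obstacle is bookkeeping rather than geometry. I need to state Pietrowski's list precisely and confirm that every group on it is either a GBS group or lies in some $\mathcal{A}_r$. In particular I must check that the HNN-type cases really are GBS groups: they have a cyclic vertex group with edge maps $a^p\mapsto a^{\pm p}$, so they are. If some case in the list is a hybrid, for example an amalgam whose vertex groups are themselves GBS groups, I would rewrite it as a single graph of cyclic groups by refining the vertex groups, and again invoke Theorem \ref{GBSMain}.
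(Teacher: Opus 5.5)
Your proposal is correct and follows the paper's route: Pietrowski's Theorem 1 presents every such group as a GBS group whose underlying graph is either a path or a cycle. The path case is an iterated cyclic amalgam of copies of $\ZZ$, so it lies in some $\mathcal{A}_r$ and is covered by Corollary \ref{AkCor} (or by Theorem \ref{GBSMain}); the cycle case has Betti number one and is covered by Theorem \ref{GBSMain}. Pietrowski's actual list, namely chains $a_1^{p_1}=a_2^{q_1},\ldots,a_{m-1}^{p_{m-1}}=a_m^{q_{m-1}}$ and the same chain closed up by a stable letter, differs slightly from the forms you guessed, but since everything on it is a GBS group this does not affect your argument.
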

Baumslag and Taylor \cite{BAUMSLAGTAYLOR} showed that these groups have an additional description as a semidirect product $F\rtimes_\varphi\ZZ$, where $F$ is some finitely-generated free group and $\varphi$ has finite Out-order. Our result thus gives partial evidence for a conjecture of Bridson, Riley, and Sale \cite{BridsonRileySaleMMJ} that all free-by-cyclic groups have linear conjugator length function.

\subsection{Acknowledgments} The author thanks Tim Riley for his very helpful advice and comments. This work was generously supported by NSF Grant DGE–2139899.

\subsection{AI Usage} The Claude LLMs Opus and Fable were used to proofread for typographical and mathematical errors and to prove Theorem \ref{GBSMain}. Due to the difference in methods, this proof is reserved to an appendix. The author wishes it to be known that, to the best of his knowledge, the key ideas in the elliptic case of Theorem \ref{GBSMain}'s proof are due to A. Wei$\ss$ and the works cited in \cite{Weiss}. Despite this, he still thinks this result should be recorded in the literature, and is of direct relevance to the body the present paper. Of course, the author has written all of the text by hand and accepts full responsibility for the paper's contents.
\section{Preliminaries}
\subsection{Notation and Conventions}
Our main equivalence relation is:
\begin{defn}\label{equivRel} For non-decreasing functions $f,g:\NN\to \NN$, we write $f(n)\preceq g(n)$ if there exists $C>0$ such that $f(n)\leq Cg(Cn+C)+Cn+C$ for all $n\in \NN$, and $f(n)\simeq g(n)$ if $f(n)\preceq g(n)$ and $g(n)\preceq f(n)$.
    
\end{defn}

Also, since each element of $\mathcal{A}_r$ is countable for all $r$, and $\mathcal{A}_0$ is countable, we see by induction that $\mathcal{A}_r$ is countable for all $r$. For the rest of this paper, and for all $r>0$ and $G\in \mathcal{A}_r\smallsetminus\mathcal{A}_{r-1}$, we may therefore fix a splitting $G=A\ast_CB$ where $C$ is cyclic and $A,B\in \mathcal{A}_{r-1}$, without appeal to set-theoretic techniques. We further assume without loss of generality that $A$ and $B$ are always disjoint before amalgamating, and that all elements of $\mathcal{A}_0$ are generated by a subset of the same alphabet $\Lambda$.

Finally, we recall that in a finitely presented group $G=\langle X\mid R\rangle$ every element $g\in G$ is equal to a (not necessarily unique) sequence $e_1e_2\ldots e_m$ of elements $e_i$ of $X\cup X^{-1}$; we call a sequence of this sort a word on $X$ which represents $g$, and call each term in the sequence a letter. We will frequently elide the difference between words and the elements they represent, saying e.g. ``the order of the word $u$ on $X$ is 5" in place of ``the word $u$ on $X$ represents an element of $G$ which has order 5." The word-length $|g|$ of an element $g$ with respect to the presentation $\langle X\mid R\rangle$ is the length of its shortest representative. Also, sometimes $X$ will contain a finite set of distinguished elements, always denoted $t_1,\ldots, t_M$. If $e_i=t_j$ in some word $e_1e_2\ldots e_m$ on $X$, we will call that letter either a $t$-letter or (for specificity) a $t_j$-letter. We will denote the number of instances of $t_i$ and $t_i^{-1}$ in a word $w$ by $\#_iw$.
\section{Graphs of Groups} \subsection{HNN-Extensions} Introduced by Graham Higman and Hanna and Bernhard Neumann in \cite{HNNOriginal}, HNN extensions are a prototypical special case of fundamental groups of graphs of groups, which we treat in the next subsection.

Given a finitely presented group $G=\langle X\mid R\rangle$ and two subgroups $A,B\leq G$ with an isomorphism $\varphi:A\to B$, the HNN extension $G'$ along $\varphi$ is the group $\langle X,t\mid R, tat^{-1}=\varphi(a)\ \forall a\in A\rangle $; the generator $t$ is called the stable letter. We record here three essential facts that will be of immediate use; for proofs and a more general treatment, we refer the reader to \cite{rotman1999introduction}.

\begin{lemma}[HNN Embedding Lemma]
   Let $G,G'$ be as above. Then $\langle X\rangle_{G'}$ is isomorphic to $G$ by the map induced by the identity on $G$. 
\end{lemma}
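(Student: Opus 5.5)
The plan is to prove the HNN Embedding Lemma with Britton's normal form argument, or more cleanly with a permutation (van der Waerden-style) action. First, the identity on $X$ induces a homomorphism $\iota: G\to G'$. This holds because every relator in $R$ is also a relator of $G'$, so von Dyck's theorem applies. Its image is exactly $\langle X\rangle_{G'}$. What remains is to show $\iota$ is injective.

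To get injectivity, I will construct an action of $G'$ on a set $\Omega$ on which $G$ acts faithfully, compatibly with $\iota$. I choose right transversals $T_A$ for $A$ in $G$ and $T_B$ for $B$ in $G$, each containing $1$. Then I let $\Omega$ be the set of reduced sequences $(g_0, t^{\epsilon_1}, g_1, \ldots, t^{\epsilon_k}, g_k)$. Here $g_k\in G$ is arbitrary. For $i<k$, the element $g_i$ lies in $T_A$ when $\epsilon_{i+1}=1$ and in $T_B$ when $\epsilon_{i+1}=-1$. Finally, no subsequence $t^{\epsilon}, 1, t^{-\epsilon}$ occurs. The action is defined as follows:
\begin{itemize}
\item $G$ acts by left multiplication on the first coordinate, renormalizing via the transversal if $k>0$;
\item $t$ acts by prepending $t$ and then either cancelling or rewriting $a\mapsto \varphi(a)$ through the transversal decomposition;
\item $t^{-1}$ acts by the mirror-image rule.
\end{itemize}

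The main work is checking that the action of $t$ is a bijection, with the $t^{-1}$ rule as its inverse. I also need that the actions of $t$ and $a\in A$ satisfy $t\cdot(a\cdot \omega)=\varphi(a)\cdot(t\cdot\omega)$. Once these hold, the relations $tat^{-1}=\varphi(a)$ and the relations $R$ are respected, so we get a homomorphism $G'\to \mathrm{Sym}(\Omega)$. This case analysis, according to whether the first letter of $\omega$ is $t^{\pm1}$ and whether $g_0$ lies in $A$ or in $B$, is the expected obstacle. It is routine but must be done carefully, since the $A$-coset decomposition $g=a\tau$ with $\tau\in T_A$ must interact correctly with $\varphi$.

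Finally, if $g\in G$ with $\iota(g)=1$, then $g$ acts trivially on $\Omega$. In particular it fixes the length-zero sequence $(1)$. But $g\cdot(1)=(g)$, so $g=1$ and $\iota$ is injective. Hence $\langle X\rangle_{G'}\cong G$ via the map induced by the identity. Alternatively, one may simply cite Britton's Lemma from \cite{rotman1999introduction}: a word on $X$ representing $1$ in $G'$ contains no $t$-letters, so it has no pinch, and therefore it is trivial in $G$.
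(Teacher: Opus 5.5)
The paper gives no proof of this lemma; it quotes it as a standard fact with a pointer to \cite{rotman1999introduction}. A van der Waerden-style permutation argument is a legitimate self-contained alternative, and it yields more than the statement: the full normal form theorem, and hence Britton's Lemma. As you have set it up, however, $\Omega$ does not match either your action rules or the relation $tat^{-1}=\varphi(a)$, so the maps you describe do not send $\Omega$ to itself.

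With $g_k$ free and each $g_i$ ($i<k$) constrained by the \emph{following} letter $t^{\epsilon_{i+1}}$, subgroup elements must be pushed rightward. That requires representatives of left cosets $g_iH$. It also requires the subgroup that passes rightward through $t$, which is $B$ (since $bt=t\varphi^{-1}(b)$), not $A$. Three concrete failures follow.
\begin{enumerate}
\item If $k>0$ and $\epsilon_1=1$, then $g_0\in T_A$. For $g\in G$ the decomposition $gg_0=a\tau$ leaves $a\in A$ to the left of $\tau$, with no slot to absorb it. So ``left multiplication with renormalization'' is undefined.
\item Your $t$-rule applied to $\omega=(a)$, $a\in A$, produces $(\varphi(a),t,1)$. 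Your $\Omega$ requires the first entry to lie in $T_A$, and $\varphi(a)$ in general does not.
\item A pinch $t\,g_i\,t^{-1}$ with $g_i\in T_B\cap A$, $g_i\neq 1$, is not excluded by forbidding only $t^{\epsilon},1,t^{-\epsilon}$.
\end{enumerate}

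The repair is to reverse the indexing; this gives the normal form your $t$-rule actually describes.
\begin{enumerate}
\item Take $g_0\in G$ arbitrary. For $i\geq 1$, require $g_i\in T_A$ if $\epsilon_i=1$ and $g_i\in T_B$ if $\epsilon_i=-1$.
\item Still forbid $t^{\epsilon},1,t^{-\epsilon}$. This now excludes exactly the pinches, since an element of $T_A$ lying in $A$ must be $1$, and likewise for $T_B$.
\item Let $G$ act by $g\cdot(g_0,t^{\epsilon_1},g_1,\ldots)=(gg_0,t^{\epsilon_1},g_1,\ldots)$. No renormalization is needed, so this is visibly a $G$-action and the relators $R$ hold automatically.
\item Let $t$ act by writing $g_0=a\tau$ with $a\in A$, $\tau\in T_A$. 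Return $(\varphi(a)g_1,t^{\epsilon_2},g_2,\ldots)$ if $\tau=1$ and $\epsilon_1=-1$, and $(\varphi(a),t,\tau,t^{\epsilon_1},g_1,\ldots)$ otherwise. Let $t^{-1}$ act by the mirror rule, using $B$, $T_B$ and $\varphi^{-1}$.
\end{enumerate}
The identity $t\cdot(a\cdot\omega)=\varphi(a)\cdot(t\cdot\omega)$ is then immediate from multiplicativity of $\varphi$. The identity $t^{\mp1}\cdot(t^{\pm1}\cdot\omega)=\omega$ is a two-case check, in which reducedness of $\omega$ rules out a second cancellation. Your final step, $g\cdot(1)=(g)$, then gives injectivity.

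Your fallback via Britton's Lemma is circular relative to the paper. The version stated there presupposes $G\le G'$ and concerns only words containing a $t$-letter. The form you would need is that a $t$-free word trivial in $G'$ is trivial in $G$, and that is precisely the embedding lemma. So that alternative is a citation rather than a proof, which is all the paper itself offers.
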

In the sequel, we will simply denote $\langle X\rangle_{G'}$ by $G$, writing $G\leq G'$ to when ambiguity would occur.
\begin{lemma}[Britton's Lemma] Let $G=\langle X\mid R\rangle$ and subgroups $A,B\leq G$ with an isomorphism $\varphi:A\to B$. Let $G'$ be an HNN-extension of $G$ along $\varphi$ with stable letter $t$, and let $w$ be a word on $X\cup\{t\}$. If $w$ represents an element of $G\leq G'$ and contains at least one $t$-letter, then $w$ contains a subword of one of the following forms:
\begin{itemize}
    \item $txt^{-1}$, where $x$ is equal (in $G$) to $a\in A$, or
    \item $t^{-1}yt$, where $y$ is equal (in $G$) to $b\in B$.
\end{itemize}
    
\end{lemma}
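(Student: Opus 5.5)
The plan is to argue by contraposition using a normal form for $G'$, built as a permutation representation in the style of van der Waerden. Call a word $w$ on $X\cup\{t\}$ \emph{reduced} if it contains no subword of either form in the statement. It then suffices to show that a reduced word containing at least one $t$-letter does not represent an element of $G$.

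\textbf{Step 1: normal forms.} Write $w=g_0t^{e_1}g_1\cdots t^{e_n}g_n$ with $g_i\in G$ and $e_i=\pm1$. From $tat^{-1}=\varphi(a)$ we get the two rules we need:
\begin{itemize}
\item $bt=t\varphi^{-1}(b)$ for $b\in B$;
\item $at^{-1}=t^{-1}\varphi(a)$ for $a\in A$.
\end{itemize}
Fix left transversals $T_A$ of $A$ and $T_B$ of $B$ in $G$, each containing $1$. A \emph{normal form} is a sequence $(g_0,t^{e_1},g_1,\ldots,t^{e_n},g_n)$ subject to three conditions:
\begin{itemize}
\item $g_{i-1}\in T_B$ when $e_i=1$, and $g_{i-1}\in T_A$ when $e_i=-1$;
\item $g_n\in G$ is arbitrary;
\item there is no subsequence $t^{e},1,t^{-e}$.
\end{itemize}
Let $\Omega$ be the set of all normal forms.

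\textbf{Step 2: an action of $G'$ on $\Omega$.} Let $g\in G$ act by left multiplication on $g_0$, re-choosing the transversal representative. Concretely, if $e_1=1$, write $gg_0=rb$ with $r\in T_B$ and $b\in B$; then push $b$ past $t$ as $\varphi^{-1}(b)$ and continue rewriting to the right. Since everything after position $0$ is already in normal form, only the last letter $g_n$ actually absorbs the change. The case $e_1=-1$ is symmetric, using $\varphi$ and $T_A$. Next define the action of $t$ on a normal form. If the form begins $1,t^{-1},g_1,\ldots$ with $g_1\in A$, then $t$ cancels the $t^{-1}$, after which $g_1$ acts on the remaining shorter form. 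Otherwise $t$ prepends $1,t$, with $g_0$ re-decomposed through $T_B$ as above. Define $t^{-1}$ symmetrically. Then verify three things: $G$ acts (a homomorphism to $\mathrm{Sym}(\Omega)$); $t$ and $t^{-1}$ are mutually inverse bijections; and $t\cdot a\cdot t^{-1}$ and $\varphi(a)$ agree on every element of $\Omega$. By von Dyck's theorem this gives an action of $G'$ on $\Omega$.

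\textbf{Step 3: conclusion.} Take a reduced word $w$ with $n\ge1$ $t$-letters and apply it to the trivial form $(1)$, reading letters right to left. Reducedness means that whenever $t^{\pm1}$ acts, the current form does not begin with the cancelling configuration, since that would require a pinch $t^{e}yt^{-e}$ in $w$ with $y$ in the appropriate subgroup. So by induction on $n$, $w\cdot(1)$ is a normal form of length exactly $n\ge1$. On the other hand, every element of $G$ sends $(1)$ to a length-$0$ form $(g)$. Hence $w\notin G$.

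The main obstacle is Step 2, specifically checking the relation $t a t^{-1}=\varphi(a)$ on $\Omega$ and checking that $t^{\pm1}$ are bijections. This requires a case split on whether the leading letter of the form is $t^{-1}$ with $g_1\in A$, together with careful tracking of how an element of $A$ or $B$ is pushed through the chain of transversal representatives. I would organize this check by first proving that the normal form of $g\cdot\omega$ differs from $\omega$ only in $g_0$, in the last entry, and in the induced change of each intermediate representative. That reduces the relation check to the two-letter identities $bt=t\varphi^{-1}(b)$ and $at^{-1}=t^{-1}\varphi(a)$.
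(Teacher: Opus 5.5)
Your overall strategy is the standard self-contained proof: van der Waerden's permutation trick on normal forms, followed by the length count of Step 3. The paper gives no proof of its own and defers to Rotman. However, Step 2 carries all the content, and as written it does not define a permutation of $\Omega$. It combines your convention (transversal constraints on $g_0,\dots,g_{n-1}$, last entry free) with rules that belong to the mirror-image convention (first entry free). In that convention one does test whether the leading group element lies in a subgroup, and splits it before prepending.

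\begin{itemize}
\item For your forms, the cancellation test for $t$ is simply that $\omega$ begins $1,t^{-1}$, since an element of $T_A$ lies in $A$ iff it equals $1$. Your extra condition ``$g_1\in A$'' is wrong. For $\omega=(1,t^{-1},g_1)$ with $g_1\notin A$, your rule prepends $1,t$ and outputs $(1,t,1,t^{-1},g_1)\notin\Omega$.
\item No re-decomposition is needed when prepending, because $g_0$ already lies in the transversal dictated by $e_1$.
\item The sentence ``only the last letter $g_n$ actually absorbs the change'' is false for a left action on your forms. Take the free group $F(x,y)$ with $A=B=\langle x\rangle$, $\varphi=\mathrm{id}$ and $y\in T_B$. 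Acting by $x$ on $(1,t,y,t,1)$ replaces the middle entry by the representative of $xy\langle x\rangle\neq y\langle x\rangle$. Your closing remark about ``the induced change of each intermediate representative'' contradicts that sentence.
\end{itemize}

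The repair is to use your convention throughout.
\begin{itemize}
\item Define the $G$-action by $g\cdot(g_0)=(gg_0)$ and, recursively, $g\cdot(g_0,t^{e_1},\omega')=(r,t^{e_1},\varphi^{-e_1}(c)\cdot\omega')$. Here $gg_0=rc$, with $r\in T_B$, $c\in B$ if $e_1=1$, and $r\in T_A$, $c\in A$ if $e_1=-1$.
\item Define $t\cdot(1,t^{-1},\omega'')=\omega''$, and otherwise let $t$ simply prepend $1,t$. Define $t^{-1}$ symmetrically.
\item The $G$-action stays in $\Omega$ because $\varphi^{-e_1}(c)$ lies in exactly the subgroup that decides whether the next entry yields a pinch $t^{e_1}g_1t^{-e_1}$: $A$ if $e_1=1$, $B$ if $e_1=-1$. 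It is a homomorphism by induction on length, using $\varphi^{-e_1}(c'c)=\varphi^{-e_1}(c')\varphi^{-e_1}(c)$.
\item The maps $t^{\pm1}$ are mutually inverse. The output of a cancelling step of $t$ never begins $1,t$, since that would force $t^{-1},1,t$ inside $\omega$; the symmetric statement holds for $t^{-1}$.
\item The relation $tat^{-1}=\varphi(a)$ follows by splitting on whether $\omega$ begins $1,t$. If it does not, $t^{-1}$ prepends $1,t^{-1}$, then $a$ passes through as $\varphi(a)$, then $t$ cancels. If $\omega=(1,t,\omega'')$, both sides equal $1,t$ followed by $a\cdot\omega''$. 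The point is that when $e_2=-1$ we have $g_1\notin A$, hence $ag_1\notin A$, so $t$ does not cancel.
\end{itemize}

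Step 3 then goes through as you describe. Write $w=h_0t^{e_1}h_1\cdots t^{e_n}h_n$ with each $h_i$ a word on $X$. The $G$-action preserves the $t$-exponent sequence. Right after $t^{e_{i+1}}$ is prepended the leading entry is $1$, so after $h_i$ acts it is the transversal representative of $h_i$. Hence $t^{e_i}$ cancels exactly when $t^{e_i}h_it^{e_{i+1}}$ is a pinch.
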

We call a word of these two forms a $t$-pinch.
\begin{cor}\label{BrittonCor}
    Let $G,G'$ be as above, and $w$ a word on $X\cup\{t\}$. If no conjugate of $w$ is an element of $G\leq G'$, then $w$ is conjugate to a cyclically freely reduced word $w'$ on $X\cup\{t\}$ such that $w'$ contains at least one $t$-letter and no cyclic permutation of $w'$ contains any $t$-pinches. 
\end{cor}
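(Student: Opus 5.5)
We take $w'$ to be a word of minimal length among all words on $X\cup\{t\}$ representing an element of $G'$ conjugate to $w$. Such a minimum exists since lengths are non-negative integers and $w$ itself is a candidate. We then check the three required properties in turn, each by contradiction with minimality or with the hypothesis that no conjugate of $w$ lies in $G$.

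\emph{Cyclically freely reduced.} Suppose $w'$ is not cyclically freely reduced. Then either $w'$ contains a subword $ee^{-1}$, or its first and last letters are mutually inverse. In the first case we delete the subword. In the second case we conjugate by that letter, which deletes both letters. Either way we obtain a strictly shorter word representing a conjugate of $w$, contradicting minimality.

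\emph{At least one $t$-letter.} If $w'$ had no $t$-letter, it would be a word on $X$ and hence would represent an element of $G\leq G'$ (by the HNN Embedding Lemma this subgroup is the image of $G$). That element is conjugate to $w$, contradicting the hypothesis.

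\emph{No $t$-pinch in any cyclic permutation.} This is the main step. Suppose some cyclic permutation $w''$ of $w'$ contains a $t$-pinch, say $txt^{-1}$ with $x=_G a\in A$ (the case $t^{-1}yt$ is symmetric). Note that $w''$ is conjugate to $w'$, hence to $w$, and $|w''|=|w'|$. In $G'$ we have $txt^{-1}=\varphi(a)$, so we may replace the subword $txt^{-1}$ by a word $u$ on $X$ representing $\varphi(a)$. This lowers the number of $t$-letters by two, but $u$ might be long, so minimal length alone does not give a contradiction. We therefore refine the choice of $w'$: first minimise the number of $t$-letters among words representing conjugates of $w$, and only then minimise length. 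Pinch removal strictly lowers the $t$-count, so a minimiser in this order has no pinch in any cyclic permutation. Free reduction never increases the $t$-count, and cyclic conjugation by a letter removes a letter without increasing it, so the cyclic reduction argument above still applies to this lexicographic minimiser. The $t$-letter argument is also unaffected.

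The only subtle point is this choice of complexity. Ordering lexicographically by ($t$-count, length) makes every reduction move strictly decreasing, so the minimiser satisfies all three conclusions at once. Britton's Lemma itself is not needed for this corollary beyond justifying the pinch replacement, which holds directly from the defining relations.
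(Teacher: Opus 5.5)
Your proof is correct and is essentially the paper's argument. The paper iterates pinch removal, cyclically conjugating as needed, and relies on each removal strictly lowering the $t$-count while the hypothesis guarantees a $t$-letter survives. Your lexicographic minimiser in ($t$-count, length) is the same termination argument phrased extremally, and it also explicitly covers the cyclic free reduction that the paper leaves implicit.
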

\begin{proof}
    If some conjugate of $w$ has no $t$-pinches in any cyclic conjugate and no $t$-letters, $w$ is conjugate to an element of $G$. Supposing this is not the case, if some cyclic conjugate $w'$ of $w$ has both $t$-letters and $t$-pinches, then replacing the pinch $txt^{-1}$ with $\varphi(x)$ (where $x=a\in A$) or the pinch $t^{-1}yt$ with $\varphi^{-1}(y)$ (where $y= b\in B$), as the case may be, strictly reduces the number of $t$-letters in \textit{any} cyclic conjugate of $w'$ while leaving at least one $t$-letter. Repeating this process, cyclically conjugating as needed, gives $w'$.
\end{proof}

\subsection{Fundamental Groups of Graphs of Groups}\label{FundGrps} This discussion is taken from the classic reference \cite{trees}. While we have explicated certain details that are crucial to our purpose, we claim no originality in this subsection. A graph $\Gamma$ consists of two disjoint sets $V(\Gamma),E(\Gamma)$, along with a map $E(\Gamma)\to V(\Gamma)^2$ denoted by $e\mapsto (\alpha(e),\omega(e))$ and an involution $E(\Gamma)\to E(\Gamma)$ denoted $e\mapsto \overline{e}$ such that $\alpha(\overline{e})=\omega(e)$ and $\omega(\overline{e})=\alpha(e)$. A reduced path in a graph is a sequence of edges $e_1,e_2,\ldots, e_m$ with $\omega(e_i)=\alpha(e_{i+1})$ and $e_i\neq \overline{e_{i+1}}$, and a circuit is a reduced path with $\alpha(e_1)=\omega(e_m)$; a tree is a graph with no circuits. 

Let $\Gamma$ be a finite graph. We associate to each vertex $v\in V(\Gamma)$ a group $G_v$ and to each edge $e\in E(\Gamma)$ with $u=\alpha(e),v=\omega(e)$ a group $H_e$ and two injections $\sigma_e:H_e\hookrightarrow G_u,\tau_e:H_e\hookrightarrow G_v $ such that $\sigma_{\overline{e}}=\tau_e$ and $\tau_{\overline{e}}=\sigma_e$. We encapsulate all of this data in the tuple $$\mathcal{G}=\left ((G_{v_1},\ldots, G_{v_k}),(H_{e_1},\ldots, H_{e_\ell}) ,(\sigma_{e_{1}},\ldots, \sigma_{e_\ell}),(\tau_{e_{1}},\ldots, \tau_{e_\ell})\right ),$$ and call the pair $(\Gamma, \mathcal{G})$ a \textit{graph of groups}. For our purposes, $H_e$ will always be cyclic and we will generally denote the images of the generator of $H_e$ under$\sigma_e$ and $\tau_e$ by $a_e$ and $b_e$ respectively, so $a_{\overline{e}}=b_e$. Also, if $\Gamma'$ is a subgraph of $\Gamma$, then the \textit{restriction} $\mathcal{G}'$ of $\mathcal{G}$ to $\Gamma'$ is the result of removing $G_v,H_e,\sigma_e,\tau_e$ from their respective components of $\mathcal{G}$, for all $v\in V(\Gamma)\smallsetminus V(\Gamma'),e\in E(\Gamma)\smallsetminus E(\Gamma')$.

We abbreviate the set $\{t_e:e\in E(\Gamma)\}$ as $\{t_e\}$, the set $\left\{t_{\overline{e}}t^{-1}_e:e\in E(\Gamma)\right\}$ as $\left\{t_{\overline{e}}t^{-1}_e\right\}$, and the set $\left\{t_ea_et_e^{-1}b_e^{-1}:E(\Gamma)\right\}$ as $\left\{t_ea_et_e^{-1}b_e^{-1}\right\}.$ If each vertex group has the presentation $G_{v}=\langle X_v\mid R_v\rangle$, then the \textit{fundamental groupoid} $\mathcal{F}(\Gamma, \mathcal{G})$ has the presentation $$\mathcal{F}(\Gamma, \mathcal{G})=\left\langle \bigsqcup_{v\in V(\Gamma)} X_v\cup\{t_e\}\ \middle\vert\  \bigsqcup_{v\in V(\Gamma)} R_v\cup\left\{t_{\overline{e}}t_{e}^{-1}\right\}\cup\left\{t_ea_et_e^{-1}b_e^{-1}\right\}  \right\rangle.$$ ($\mathcal{F}(\Gamma, \mathcal{G})$ is, to be clear, a group, however this nomenclature reflects its topological inspiration and is given in \cite{trees}.) For a spanning tree $T$ of $\Gamma$, the \textit{fundamental group with respect to $T$}, denoted $\pi_1(\Gamma,\mathcal{G},T)$, is the quotient of $\mathcal{F}(\Gamma,\mathcal{G})$ by the normal subgroup generated by $\{t_e:e\in E(T)\}$. 

\begin{rem}
    In the event $\Gamma$ is a single edge $e$ between $u$ and $v$, the fundamental group will be the amalgamated free product $G_u\ast_{H_e}G_v$ -- the presentation above will be our default presentation for any such product.
\end{rem}

\begin{rem}\label{fundGroupoidInclusion}Up to isomorphism, the fundamental group does not depend on $T$ \cite[Proposition 5.1.20]{trees}; indeed, if $\rho:\mathcal{F}(\Gamma,\mathcal{G})\twoheadrightarrow  \pi_1(\Gamma,\mathcal{G},T)$ is the above quotient, there exists an injection $i: \pi_1(\Gamma,\mathcal{G},T)\hookrightarrow  \mathcal{F}(\Gamma,\mathcal{G})$ such that $\rho\circ i$ is the identity and, for some other spanning tree $T'$ and corresponding quotient $\rho':\mathcal{F}(\Gamma,\mathcal{G})\twoheadrightarrow  \pi_1(\Gamma,\mathcal{G},T')$, the map $\rho'\circ i$ is an isomorphism. The proof of this same Proposition indicates $i(X_v)\subseteq \mathcal{F}(\Gamma,\mathcal{G})$ is a conjugate of $X_v\subseteq \mathcal{F}(\Gamma,\mathcal{G})$ by a sequence of $t$-letters.\end{rem}

\begin{defn}\label{treeprod}
    
If $\Gamma$ is a tree, $T$ is unique and $E(\Gamma)=E(T)$. We call the fundamental group of $(\Gamma,\mathcal{G})$ the \textit{tree product} of the vertex groups, denote it by $\pi_1(\Gamma,\mathcal{G})$, and (abbreviating $\{a_eb_e^{-1}:e\in E(\Gamma)\}$ by $\{a_eb_e^{-1}\}$) note that it has the presentation $$\pi_1(\Gamma, \mathcal{G})=\left\langle \bigsqcup_{v\in V(\Gamma)} X_v\ \middle\vert\  \bigsqcup_{v\in V(\Gamma)} R_v\cup\{a_eb_e^{-1}\}  \right\rangle.$$ In particular, if $\Gamma$ consists of a single edge $H_e$ and two endpoints $u\neq v$, then $\pi_1(\Gamma, \mathcal{G})$ is the amalgamated free product $G_u\ast_{H_e}G_v$ of $G_u,G_v$ along $H_e$ (if $u=v$ instead, then $\pi_1(\Gamma, \mathcal{G})$ is an HNN extension of $G_u=G_v$ associating $a_e$ to $b_e$). Finally, for any vertex group $G_v$, the obvious map $G_v\to \langle X_v\rangle_{\pi_1(\Gamma,\mathcal{G})}$ is injective, and is called the canonical inclusion.
\end{defn}

We conclude this section with a universal property of amalgamated free products.

\begin{lemma}[\cite{trees}]\label{univAmalgams}
    Let $G=A\ast_C B$, with $x$ and $y$ the generators of $C$ in $A$ and $B$ respectively. Given maps $f_1:A\to H,f_2:B\to H$ such that $f_1(x)=f_2(y)$, there exists a unique map $f:G\to H$ with $f|_A=f_1,f|_B=f_2$.
\end{lemma}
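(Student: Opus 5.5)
The statement to prove is the universal property of $G=A\ast_C B$ (Lemma \ref{univAmalgams}). I would prove it directly from the presentation of the amalgamated free product fixed in the Remark and in Definition \ref{treeprod}, using von Dyck's theorem: a map out of a presented group is the same as an assignment on generators that kills every relator. Write $A=\langle X_A\mid R_A\rangle$ and $B=\langle X_B\mid R_B\rangle$ with $X_A,X_B$ disjoint. Since $\Gamma$ is a single edge between distinct vertices, the tree-product presentation is
$$G=\left\langle X_A\sqcup X_B\ \middle\vert\ R_A\sqcup R_B\cup\{xy^{-1}\}\right\rangle,$$
where $x,y$ are words representing the generators of $C$ in $A$ and $B$.

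\textbf{Existence.} I would define $f$ on the free group $F(X_A\sqcup X_B)$ by sending $a\in X_A$ to $f_1(a)$ and $b\in X_B$ to $f_2(b)$. Three families of relators must map to $1$ in $H$. A relator in $R_A$ maps to $f_1$ of a trivial element of $A$, hence to $1$. Likewise a relator in $R_B$ maps to $1$ because $f_2$ is a homomorphism. Finally $xy^{-1}$ maps to $f_1(x)f_2(y)^{-1}=1$ by hypothesis. So $f$ descends to a homomorphism $G\to H$. Under the canonical inclusions of Definition \ref{treeprod}, $A$ and $B$ are identified with $\langle X_A\rangle_G$ and $\langle X_B\rangle_G$. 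Hence $f|_A$ and $f_1$ agree on the generating set $X_A$, so $f|_A=f_1$; similarly $f|_B=f_2$.

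\textbf{Uniqueness.} $G$ is generated by $X_A\sqcup X_B$, and any homomorphism $f'$ with $f'|_A=f_1$ and $f'|_B=f_2$ agrees with $f$ on these generators, so $f'=f$.

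The only point needing care is the identification of $A$ and $B$ with subgroups of $G$, which the statement implicitly uses when it writes $f|_A$. This is exactly the injectivity of the canonical inclusion in Definition \ref{treeprod}, which I take as given from \cite{trees}. With that, the argument is routine.
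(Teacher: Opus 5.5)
Your von Dyck argument is correct and is exactly the standard verification behind the citation: the paper gives no proof of its own, and in \cite{trees} the amalgam is defined by this universal property and shown to exist via precisely the generators-and-relations presentation you take from Definition \ref{treeprod}. One small refinement: injectivity of the canonical inclusions is not actually needed, since if you read $f|_A$ as $f\circ\iota_A$ both existence and uniqueness go through verbatim; injectivity only matters for literally identifying $A$ with the subgroup $\langle X_A\rangle_G$.
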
 

\begin{cor}\label{rightInvUniv}
    Let $G=A\ast_C B$, with $x$ and $y$ the generators of $C$ in $A$ and $B$ respectively. Given maps $f_1:A\to A',f_2:B\to B'$, suppose there exist $g_1:A'\to A,g_2:B'\to B$ such that $g_i\circ f_i$ is the identity and let $H=A'\ast_{C'}B'$ where the generators of $C'$ in $A'$ and $B'$ are $f_1(x)$ and $f_2(y)$ respectively.  Then there exist maps $f:G\to H$ and $g:H\to G$ such that $g\circ f$ is the identity.
\end{cor}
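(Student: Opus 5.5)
We build $f$ and $g$ directly from the universal property of amalgamated free products (Lemma \ref{univAmalgams}), applied once to $G$ and once to $H$, and then use the uniqueness clause of that lemma to identify $g\circ f$ with the identity.

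\textbf{Constructing $f$.} Let $\iota_{A'}:A'\to H$ and $\iota_{B'}:B'\to H$ be the canonical inclusions (Definition \ref{treeprod}). Consider $F_1=\iota_{A'}\circ f_1:A\to H$ and $F_2=\iota_{B'}\circ f_2:B\to H$. By the definition of $H$, the generator of $C'$ is $f_1(x)$ in $A'$ and $f_2(y)$ in $B'$, and the relation $a_eb_e^{-1}$ in the default presentation identifies these. Hence $F_1(x)=F_2(y)$ in $H$, and Lemma \ref{univAmalgams} gives a unique $f:G\to H$ with $f|_A=F_1$ and $f|_B=F_2$.

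\textbf{Constructing $g$.} Now take $G_1=\iota_A\circ g_1:A'\to G$ and $G_2=\iota_B\circ g_2:B'\to G$. To apply Lemma \ref{univAmalgams} we need $G_1(f_1(x))=G_2(f_2(y))$. Since $g_i\circ f_i$ is the identity, the left side is $\iota_A(x)$ and the right side is $\iota_B(y)$, and these are equal in $G$ because $x$ and $y$ are identified there. So there is a unique $g:H\to G$ with $g|_{A'}=G_1$ and $g|_{B'}=G_2$.

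\textbf{Showing $g\circ f=\mathrm{id}_G$.} Restricted to $A$, the map $g\circ f$ is $\iota_A\circ g_1\circ f_1=\iota_A$, and restricted to $B$ it is $\iota_B$. The identity map $G\to G$ also restricts to $\iota_A$ and $\iota_B$. Applying the uniqueness part of Lemma \ref{univAmalgams} to the pair $(\iota_A,\iota_B)$, which agree on $x$ and $y$, gives $g\circ f=\mathrm{id}_G$.

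The only point needing care is that $H=A'\ast_{C'}B'$ is a genuine cyclic amalgam, i.e.\ that $f_1(x)$ and $f_2(y)$ generate infinite cyclic subgroups. This holds because $f_i$ is injective (it has the left inverse $g_i$), so $f_1(x)$ and $f_2(y)$ are non-torsion elements, as $x$ and $y$ are. Given this, the argument is routine.
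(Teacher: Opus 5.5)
Your proof is correct and follows the paper's argument essentially verbatim. As in the paper, $f$ comes from Lemma \ref{univAmalgams} applied to $f_1,f_2$ composed with the inclusions into $H$, $g$ comes from the same lemma applied to $i_1\circ g_1,\ i_2\circ g_2$, and $g\circ f$ is the identity because it fixes $A$ and $B$ pointwise; your explicit use of the uniqueness clause and your check that $f_1(x),f_2(y)$ have infinite order are small clarifications the paper leaves implicit.
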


\begin{proof}
    The existence of some $f:G\to H$ is given by Lemma \ref{univAmalgams}, since $f_1(x)=f_2(y)$ in $H$. For $g:H\to G$, let $i_1:A\hookrightarrow G,i_2:B\hookrightarrow G$ be the canonical inclusions. In $G$, we have $i_1(g_1(f_1(x)))=x=y=i_2(g_2(f_2(y)))$, hence there exists a map $g:H\to G$ with $g|_{A'}=i_1\circ g_1$ and $g|_{B'}=i_2\circ g_2$. Note that $g\circ f|_A=g\circ f_1$ fixes every element of $A$, and $g\circ f|_B=g\circ f_2$ fixes every element of $B$, hence $g\circ f$ is the identity on $G$ as desired.
\end{proof}
Note in this construction that $f|_A$ equals $f_1$ composed with the inclusion $A'\hookrightarrow H$ and $f|_B$ equals $f_2$ composed with the inclusion $B'\hookrightarrow H$.

\subsection{Iterative Fundamental Groupoids}\label{IFGdef}
In this subsection we construct the iterative fundamental groupoid of $G\in \mathcal{A}_r$, which is a group $\mathcal{F}(G)$ and an injection $f_G:G\hookrightarrow \mathcal{F}(G)$ such that there exists a left inverse $f'_G:\mathcal{F}(G)\twoheadrightarrow G$ to $f_G$.
\begin{defn}\label{defIterGroupoid} We construct $\mathcal{F}(G),f_G$ inductively, based on the splittings of each element of $\mathcal{A}_r$ that we fixed above: for $G\in \mathcal{A}_0$, let $\mathcal{F}(G)=G$ and $f_G=id_G$. For $G=A\ast_CB$, where the generators of $C$ in $A,B\in \mathcal{A}_{r-1}$ are $x$ and $y$ respectively, let $G'=\mathcal{F}(A)\ast_{C'}\mathcal{F}(B)$, where the generators of $C'$ in $A$ and $B$ are $f_A(x)$ and $f_B(y)$; Corollary \ref{rightInvUniv}, when applied to $f_A$ and $f_B$, gives maps $f_0:G\to G'$, $g_0:G'\to G$. Next, let $\mathcal{F}(G)$ be the HNN extension of the free product $\mathcal{F}(A)\ast\mathcal{F}(B)$ with stable letter $t$ conjugating $f_A(x)$ to $f_B(y)$, and let $f_1:G'\hookrightarrow \mathcal{F}(G)$ be the homomorphism defined by $f_1(a)=tat^{-1}$ for $a\in \mathcal{F}(A)$, $f_1(b)=b$ for $b\in \mathcal{F}(B)$ (this exists by Lemma \ref{univAmalgams}). Letting $g_1:\mathcal{F}(G)\to G'$ be the quotient map killing $t$, we can let $f_G=f_1\circ f_0$ and $f_G'=g_0\circ g_1$, completing our definition.\end{defn}

\begin{rem}\label{CLRetract}
    Note that $G$ is a retract of $\mathcal{F}(G)$, whence $\CL_{G}(n)\preceq \CL_{\mathcal{F}(G)}(n)$. 
\end{rem}

\begin{lemma}\label{iterFundGrpPresentation}
    For all $r\geq 0$ and $G\in \mathcal{A}_r$, $\mathcal{F}(G)$ has a presentation of the form $\langle \Lambda_0, t_1,\ldots, t_M\mid t_ix_it_i^{-1}=y_i\rangle$ where $\Lambda_0$ is a finite subset of $\Lambda$ and $x_i,y_i$ are words on $\Lambda_0\cup\{t_1,\ldots, t_{i-1}\}$ not representing the identity. Moreover, $\langle\Lambda_0\rangle_{\mathcal{F}(G)}$ is free with free basis $\Lambda_0$, and each $y_i$ is conjugate to some element of $im(f_G)$.
\end{lemma}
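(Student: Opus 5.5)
We argue by induction on $r$, following the inductive structure of Definition \ref{defIterGroupoid}. For $r=0$, $G$ is free on a finite subset $\Lambda_0\subseteq\Lambda$ and $\mathcal{F}(G)=G$. We take $M=0$. The presentation $\langle\Lambda_0\mid\ \rangle$ has the required form, $\Lambda_0$ is a free basis, and the condition on the $y_i$ is vacuous. The same argument covers any $G\in\mathcal{A}_r$ that already lies in $\mathcal{A}_{r-1}$, since then the fixed splitting and $\mathcal{F}(G)$ come from the earlier stage.

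For the inductive step, let $G=A\ast_CB$ with $A,B\in\mathcal{A}_{r-1}$. By induction, $\mathcal{F}(A)=\langle \Lambda_A,s_1,\ldots,s_{M_A}\mid s_ix_is_i^{-1}=y_i\rangle$ and $\mathcal{F}(B)=\langle \Lambda_B,u_1,\ldots,u_{M_B}\mid u_jx'_ju_j^{-1}=y'_j\rangle$ have presentations of the required form. Since $A$ and $B$ were taken disjoint before amalgamating, we may assume $\Lambda_A\cap\Lambda_B=\emptyset$, renaming letters of $\Lambda$ if necessary. This does not change isomorphism types, and $\Lambda$ is infinite.

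The free product $\mathcal{F}(A)\ast\mathcal{F}(B)$ then has the concatenated presentation. We set $\Lambda_0=\Lambda_A\sqcup\Lambda_B$ and list the stable letters $t_1,\ldots,t_{M_A+M_B}$ as $s_1,\ldots,s_{M_A},u_1,\ldots,u_{M_B}$. Each relator word uses only $\Lambda_0$ and earlier stable letters, because the $x'_j,y'_j$ involve only $\Lambda_B$ and $u$'s of smaller index.

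Finally, $\mathcal{F}(G)$ adds one more stable letter $t=t_M$, with $M=M_A+M_B+1$, and the relation $t\,f_A(x)\,t^{-1}=f_B(y)$. Here $f_A(x)$ and $f_B(y)$ are chosen as words on the earlier generators. They do not represent the identity, because $x$ and $y$ generate the infinite cyclic group $C$ and $f_A,f_B$ are injective.

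\textbf{Freeness of $\langle\Lambda_0\rangle$.} By induction, $\langle\Lambda_A\rangle_{\mathcal{F}(A)}$ and $\langle\Lambda_B\rangle_{\mathcal{F}(B)}$ are free on $\Lambda_A$ and $\Lambda_B$. In the free product these two subgroups generate their free product, which is free on $\Lambda_0$. By the HNN Embedding Lemma, $\mathcal{F}(A)\ast\mathcal{F}(B)$ embeds in $\mathcal{F}(G)$, so $\langle\Lambda_0\rangle_{\mathcal{F}(G)}$ is free with basis $\Lambda_0$.

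\textbf{The conjugacy condition.} This is the only step needing real care: each $y_i$ must be conjugate to an element of $im(f_G)$, where $f_G=f_1\circ f_0$.
\begin{itemize}
\item For the new letter $t_M$, $y_M=f_B(y)$. Since $f_0|_B=f_B$ followed by inclusion and $f_1$ is the identity on $\mathcal{F}(B)$, we get $f_G(y)=f_B(y)=y_M$.
\item For the $u_j$, induction gives $y'_j=g\,f_B(b)\,g^{-1}$ for some $b\in B$ and $g\in\mathcal{F}(B)$, and $f_B(b)=f_G(b)$ by the same computation. So $y'_j$ is conjugate to $f_G(b)$.
\item For the $s_i$, induction gives $y_i=g\,f_A(a)\,g^{-1}$ with $a\in A$ and $g\in\mathcal{F}(A)$. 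Now $f_G(a)=t\,f_A(a)\,t^{-1}$ by the definition of $f_1$, so $y_i=(gt^{-1})\,f_G(a)\,(gt^{-1})^{-1}$.
\end{itemize}
These identities hold in the subgroup $\mathcal{F}(A)\ast\mathcal{F}(B)\leq\mathcal{F}(G)$, hence in $\mathcal{F}(G)$, which completes the induction.

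The main obstacle is this last bookkeeping point: the twist by $t$ in $f_1$ means $im(f_G)$ contains $f_A(A)$ only up to conjugation by $t$. That is exactly why the lemma asserts conjugacy rather than membership.
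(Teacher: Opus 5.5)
Your proposal is correct and follows essentially the same route as the paper. Both argue by induction on $r$, concatenate the presentations of $\mathcal{F}(A)$ and $\mathcal{F}(B)$ and append the new stable letter, deduce freeness of $\langle\Lambda_0\rangle$ from the free product of the two free subgroups together with the HNN embedding, and check the conjugacy condition in the same three cases. Your explicit conjugator $gt^{-1}$ in the $\mathcal{F}(A)$ case, and your explicit requirement that $\Lambda_A\cap\Lambda_B=\emptyset$, simply spell out steps the paper calls ``immediate'' or leaves implicit.
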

\begin{proof}
    We proceed by induction on $r$. The base case is trivial (take $M=0$). If $G=A\ast_CB\in \mathcal{A}_1$, where $A=\langle \Lambda_1\rangle$ and $B=\langle\Lambda_2\rangle$ are free, then $\mathcal{F}(G)$ is the HNN extension of the free group $\langle \Lambda_1\rangle\ast \langle \Lambda_2\rangle$ associating the generator $x=x_1$ of $C$ in $A$ to the generator $y=y_1$ of $C$ in $B$; since $y_1\in im(f_G)$ by construction, the base case is done. 

    For the inductive step, let $G=A\ast_CB\in \mathcal{A}_r$ and suppose $\mathcal{F}(A)=\langle \Lambda_1, t_1,\ldots, t_M\mid t_ix_it_i^{-1}=y_i\ (i\leq M)\rangle$, $\mathcal{F}(B)=\langle \Lambda_2, t_{M+1},\ldots, t_{M'}\mid t_jx_jt_j^{-1}=y_j\ (M+1\leq j\leq M')\rangle$ satisfy the Lemma. Letting $x,y$ be the generators of $C$ in $A$ and $B$ as usual, we see $\mathcal{F}(G)$ is an HNN extension of $\mathcal{F}(A)\ast \mathcal{F}(B)$ with stable letter $t_{M'+1}$ associating $x_{M'+1}=f_A(x)$ to $y_{M'+1}=f_B(y)$. We thus have $$\mathcal{F}(G)=\langle \Lambda_1,\Lambda_2,t_1,\ldots, t_{M'}\mid t_ix_it_i^{-1}=y_i\ (i\leq M'), t_{M'+1}f_A(x)t_{M'+1}^{-1}=f_B(y)\rangle.$$ Setting $\Lambda_0=\Lambda_1\cup\Lambda_2$ establishes the desired presentation. As for $\langle \Lambda_0\rangle_{\mathcal{F}(G)}$ being free, note that $\langle \Lambda_1\rangle_{\mathcal{F}(A)}$ and $\langle \Lambda_2\rangle_{\mathcal{F}(B)}$ are free on the given generating sets, so $\Lambda_0$ is a free basis for a subgroup of $\mathcal{F}(A)\ast \mathcal{F}(B)\leq \mathcal{F}(G)$.

    Finally, to show that $y_i$ is conjugate into $im(f_G)$ for all $i\leq M'+1$, we have three cases: $i\leq M, M<i\leq M'$, and $i=M'+1$. If $i\leq M$, then $y_i$ is conjugate into $im(f_A)$ in $ \mathcal{F}(A)$; this means it is conjugate to an element of $im(f_0)$ in $G'$, since $f_0|_A=f_A$ by definition. Applying $f_1$, we have that $f_1(y_i)=ty_it^{-1}$ is conjugate to an element of $f_1(im(f_0))=im(f_G)$ in $\mathcal{F}(G)$, and the claim is immediate. The other two cases follow by similar and shorter arguments.
\end{proof}
\begin{cor}\label{iterBrittonForm}
    The subgroup $\langle \Lambda_0,t_1,\ldots ,t_i\rangle_{\mathcal{F}(G)}$ is an HNN extension of $\langle \Lambda_0,t_1,\ldots ,t_{i-1}\rangle_{\mathcal{F}(G)}$ with stable letter $t_i$.
\end{cor}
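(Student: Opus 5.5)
We need to show that $H_i:=\langle \Lambda_0,t_1,\ldots ,t_i\rangle_{\mathcal{F}(G)}$ is an HNN extension of $H_{i-1}$ with stable letter $t_i$. We would argue by induction on $r$, following the inductive construction of Definition \ref{defIterGroupoid} and the presentation in Lemma \ref{iterFundGrpPresentation}. The idea is that $\mathcal{F}(G)$ is built as an HNN extension of a free product of two groups that already have this structure, and the subgroup generated by an initial segment of the generators sits inside one of the two factors.

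For $G\in\mathcal{A}_0$ there is nothing to prove. For the base case $r=1$, $\mathcal{F}(G)$ is by definition the HNN extension of the free group $\langle\Lambda_1\rangle\ast\langle\Lambda_2\rangle=\langle\Lambda_0\rangle$ with stable letter $t_1$. For the inductive step, write $G=A\ast_CB$. Then $\mathcal{F}(A)$ has generators $\Lambda_1,t_1,\ldots,t_M$ and $\mathcal{F}(B)$ has generators $\Lambda_2,t_{M+1},\ldots,t_{M'}$, and $\mathcal{F}(G)$ is the HNN extension of $P=\mathcal{F}(A)\ast\mathcal{F}(B)$ with stable letter $t_{M'+1}$. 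The case $i=M'+1$ is then immediate: $H_{M'}=P$ by the HNN Embedding Lemma, and $H_{M'+1}=\mathcal{F}(G)$ is an HNN extension of it by construction.

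The main case is $i\le M'$. Take first $i\le M$. We claim
$$H_i=\langle\Lambda_1,t_1,\ldots,t_i\rangle_{\mathcal{F}(A)}\ast\langle\Lambda_2\rangle_{\mathcal{F}(B)}.$$
This holds because subgroups of free factors generate their free product inside a free product, and $P$ embeds in $\mathcal{F}(G)$. By the inductive hypothesis applied to $A$, the subgroup $K_i:=\langle\Lambda_1,t_1,\ldots,t_i\rangle_{\mathcal{F}(A)}$ is an HNN extension of $K_{i-1}$ along $x_i\mapsto y_i$, where $x_i,y_i\in K_{i-1}$. It remains to show that an HNN extension free-producted with a group $F$ is an HNN extension of the free product with $F$:
$$(K_{i-1}\ast_{t_i})\ast F\;\cong\;(K_{i-1}\ast F)\ast_{t_i},$$
with the same associated cyclic subgroups. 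We would verify this by comparing presentations: both sides have the presentation obtained by adjoining $t_i$ and the relation $t_ix_it_i^{-1}=y_i$ to presentations of $K_{i-1}$ and $F$. The isomorphism with $H_i$ then follows because $K_{i-1}\ast F=H_{i-1}$ by the same free-factor argument. The case $M<i\le M'$ is symmetric. Here the index shift means $H_i=\langle\Lambda_1\rangle\ast\langle\Lambda_2,t_{M+1},\ldots,t_i\rangle$ only when we first add all of $t_1,\ldots,t_M$. In that case $H_i=\mathcal{F}(A)\ast\langle\Lambda_2,t_{M+1},\ldots,t_i\rangle_{\mathcal{F}(B)}$, and the same argument applies with $F=\mathcal{F}(A)$.

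The main obstacle is making the identification of $H_i$ with an abstract group rigorous rather than merely comparing presentations. We need that the subgroup of $\mathcal{F}(G)$ generated by these letters has no relations beyond the listed ones. We would handle this through the chain of injective maps: $K_i\hookrightarrow\mathcal{F}(A)$ by the inductive hypothesis, then $K_i\ast F\hookrightarrow\mathcal{F}(A)\ast\mathcal{F}(B)$ by the standard fact about free products of subgroups, then $P\hookrightarrow\mathcal{F}(G)$ by the HNN Embedding Lemma. We also need the inductive hypothesis in its stronger form: the subgroup generated by $\Lambda_0$ and an initial segment of the $t$'s has exactly the presentation consisting of the relations $t_jx_jt_j^{-1}=y_j$ with $j\le i$. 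We would therefore strengthen the statement to include this presentation claim and prove both together by induction.
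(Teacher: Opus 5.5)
Your argument is correct, but it takes a different route from the paper. The paper's one-line proof works directly from the presentation in Lemma \ref{iterFundGrpPresentation}. It adds the stable letters one at a time. At each step $x_i,y_i$ are nontrivial in the group presented by the first $i-1$ relations, and that group is torsion-free, being an iterated HNN extension of the free group on $\Lambda_0$. Hence $\langle x_i\rangle\cong\langle y_i\rangle\cong\ZZ$, so each step is a genuine HNN extension, and the HNN Embedding Lemma identifies each partial group with the corresponding subgroup of $\mathcal{F}(G)$.

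You instead retrace the recursive construction of Definition \ref{defIterGroupoid}. You use that subgroups of free factors generate their free product, and that $(K_{i-1}\ast_{t_i})\ast F\cong(K_{i-1}\ast F)\ast_{t_i}$. Your route needs neither torsion-freeness nor the nontriviality of $x_i,y_i$, since every associated cyclic subgroup is by construction an injective image of an infinite cyclic edge group. It also yields the extra decomposition $H_i=K_i\ast F$. However, it is tied to the specific construction of $\mathcal{F}(G)$. The paper's argument uses only the shape of the presentation, so it applies verbatim to the group $H$ of Theorem \ref{SecondMainTheorem}, where the paper invokes this corollary again.

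Two small remarks. First, the strengthened inductive hypothesis you propose is unnecessary. Your presentation comparison works with arbitrary presentations of $K_{i-1}$ and $F$, and the presentation claim follows from iterating the HNN statement anyway. Second, in the case $M<i\le M'$ the correct decomposition is the one you end with, $H_i=\mathcal{F}(A)\ast\langle\Lambda_2,t_{M+1},\ldots,t_i\rangle_{\mathcal{F}(B)}$, not the one with $\langle\Lambda_1\rangle$ as the first factor.
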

\begin{proof}
    This follows by induction on $M-i$, the presentation given in Lemma \ref{iterFundGrpPresentation}, the definition of an HNN extension, and the fact that an HNN extension of a torsion-free group is torsion free.
\end{proof}
 \begin{defn} We say a $t_i$-pinch is a word of the form $t_iwt_i^{-1}$, where $w$ represents $x_i^m$ for some $m\in\ZZ$, or of the form $t_i^{-1}wt_i$ where $w$ represents $y_i^m$ for some $m\in \ZZ$. A word $w$ on $\{\Lambda_0,t_1,\ldots, t_i\}$ is cyclically Britton-reduced with respect to $t_i$ if it is cyclically freely reduced and no cyclic permutation has any $t_j$-pinches for $j\leq i$. \end{defn}
 
 \begin{lemma}\label{cannonicalConjugates}
 For all $i\in \{1,\ldots, \ell\}$, if $x_i$ and $y_i$ are not conjugate to an element of $\langle \Lambda_0\rangle_{\mathcal{F}(G)}$, then there exist $j_i, k_i\in \{1,\ldots, \ell\}$ and some reduced words $x_i',y_i'$ on $\{\Lambda_0,t_1,\ldots, t_{j_i}\}$ containing a $t_{j_i}$- and $t_{k_i}$-letter respectively, which are conjugate respectively to $x_i,y_i$ and cyclically Britton-reduced with respect to $t_{j_i}$ or $t_{k_i}$.\end{lemma}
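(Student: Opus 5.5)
We apply Corollary \ref{BrittonCor} repeatedly, descending through the tower of HNN extensions described in Corollary \ref{iterBrittonForm}. We treat $x_i$; the argument for $y_i$ is identical and produces $k_i$.

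The first step fixes the level. Since $x_i$ is a word on $\Lambda_0\cup\{t_1,\ldots,t_{i-1}\}$, it lies in $H_{i-1}:=\langle \Lambda_0,t_1,\ldots,t_{i-1}\rangle_{\mathcal{F}(G)}$. Consider the chain
$$\langle\Lambda_0\rangle = H_0\leq H_1\leq\cdots\leq H_{i-1}.$$
By hypothesis $x_i$ is not conjugate in $\mathcal{F}(G)$ into $H_0$. Let $j=j_i$ be the least index such that $x_i$ is conjugate, within $H_{i-1}$, to an element of $H_j$. Such a $j$ exists because $j=i-1$ qualifies, and $j\geq 1$ because $x_i$ is not conjugate into $H_0$. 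Pick $z\in H_j$ conjugate to $x_i$. By minimality of $j$, no conjugate of $z$ in $H_j$ lies in $H_{j-1}$: a conjugator in $H_j$ is in particular a conjugator in $H_{i-1}$, so such a conjugate would contradict the choice of $j$.

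The second step applies Corollary \ref{BrittonCor} to the HNN extension $H_j$ of $H_{j-1}$ with stable letter $t_j$, which is valid by Corollary \ref{iterBrittonForm}. It gives a cyclically freely reduced word $w$ conjugate to $z$ that contains a $t_j$-letter and no cyclic permutation of which contains a $t_j$-pinch.

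The third step removes $t_{j'}$-pinches for $j'<j$. The word $w$ may still contain such pinches inside its cyclic permutations. Replace each $t_{j'}$-pinch $t_{j'}ut_{j'}^{-1}$ with $u=x_{j'}^m$ by the word $y_{j'}^m$, and treat $t_{j'}^{-1}ut_{j'}$ symmetrically. Then freely and cyclically reduce, and repeat, working downward or by induction on a complexity measure.

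The main obstacle is showing that this process terminates and never creates a $t_j$-pinch or destroys every $t_j$-letter. Both facts come from Britton's lemma applied at the top level $t_j$.

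1. These replacements happen inside $H_{j-1}$-syllables of $w$, so they never change the $t_j$-letters. The number of $t_j$-letters is therefore preserved, and in particular stays positive.
2. Whether a subword $t_j^{\pm1}ut_j^{\mp1}$ is a pinch depends only on the element of $H_{j-1}$ represented by $u$, not on the word. So no $t_j$-pinch can appear.
3. For termination, order words by the tuple $(\#_{j-1}w,\#_{j-2}w,\ldots,\#_1 w)$ lexicographically, plus length. Replacing a $t_{j'}$-pinch removes two $t_{j'}$-letters and inserts $y_{j'}^m$, which involves only $t$-letters of index below $j'$. The tuple therefore strictly decreases, and free and cyclic reductions do not increase it.

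When the process stops, the resulting $x_i'$ is reduced, contains a $t_{j_i}$-letter, and is cyclically Britton-reduced with respect to $t_{j_i}$, as required.

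One subtlety is that inserting $y_{j'}^m$ may lengthen the word. This is harmless here, because the statement asserts only existence and not a length bound.
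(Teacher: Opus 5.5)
Your proposal is correct and follows essentially the paper's route: apply Corollary \ref{BrittonCor} to $H_{j_i}$ as an HNN extension of $H_{j_i-1}$ (Corollary \ref{iterBrittonForm}), then remove lower-index pinches via $t_kx_k^mt_k^{-1}\mapsto y_k^m$ and $t_k^{-1}y_k^mt_k\mapsto x_k^m$. You also supply details the paper leaves implicit: choosing $j_i$ minimal so that Corollary \ref{BrittonCor} applies, checking that the $t_{j_i}$-letters and the absence of $t_{j_i}$-pinches survive the replacements, and proving termination with the lexicographic count $(\#_{j_i-1},\ldots,\#_1)$, which makes precise the paper's remark that the replacements decrease the number of $t$-letters.
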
\begin{proof} All save the final claim hold by Corollaries \ref{BrittonCor} and \ref{iterBrittonForm}. For the final claim, we need to remove $t_k$-pinches for $k\leq j_i,k_i$; this is done for $k=j_i$ also by Corollaries \ref{BrittonCor} and \ref{iterBrittonForm}. Downward induction on $k$ gives the $k<j_i, k_i$ case, using the fact that replacing $t_kx_k^mt_k^{-1}$ or $t_k^{-1}y_k^mt_k$ with $y_k^{m}$ or $x_k^m$ (resp.) decreases the number of $t$-letters.  
\end{proof}
% \subsection{One-Relator Groups} It was shown by Pietrowski in \cite{Pietrowski1974} that all one-relator groups with non-trivial center whose abelianization is not $\ZZ^2$ have a presentation of the form $$\langle a_1,\ldots, a_m\mid a_1^{p_1}=a_2^{q_1},\ldots, a_{m-1}^{p_{m-1}}=a_m^{q_{m-1}}\rangle,$$ and that one-relator groups with non-trivial center and abelianization isomorphic to $\ZZ^2$ have a presentation of the form $$\langle a,a_1,\ldots, a_m\mid a_1^{p_1}=a_2^{q_1},\ldots, a_{m-1}^{p_{m-1}}=a_{m}^{q_{m-1}}, aa_m^{p_m}a^{-1}=a_1^{q_m}\rangle.$$ Note that the first case is a GBS defined on a path and the second is a GBS defined on a cycle; both of these graphs have Betti-number at most 1.
\subsection{Subgroup Distortion}
We conclude this section with the definition of subgroup distortion.
\begin{defn}
    Let $H\leq G$ be two finitely presented groups; the distortion $\Dist_H^G(n)$ of $H$ in $G$ is the largest word-length in $H$ of any element $h\in H$ with $|h|\leq n$ (regardless of the chosen generating sets for $H$ and $G$, this is well defined up to $\simeq$). If $\Dist_H^G(n)\simeq n$, we say $H$ is \textit{undistorted in }$G$.
\end{defn}
We will mainly use the following three facts about subgroup distortion, recorded here for reference:
\begin{fact}\label{QIEmbedd}
    A subgroup $H$ of $G$ is undistorted if and only if the inclusion map $\iota:H\hookrightarrow G$ is a quasi-isometric embedding. In particular, if $H\leq K\leq G$ with $\Dist_H^K(n)\simeq \Dist_K^G(n)\simeq n$, then $\Dist_H^G(n)\simeq n$ because the composition of two quasi-isometric embeddings remains a quasi-isometric embedding.
\end{fact}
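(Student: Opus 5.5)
The plan is to unwind both notions into inequalities between word lengths and compare them directly. Fix finite generating sets $Y$ for $H$ and $X$ for $G$, and write $|\cdot|_H$ and $|\cdot|_G$ for the corresponding word lengths. Use the left-invariant word metrics $d_H(h_1,h_2)=|h_1^{-1}h_2|_H$ and $d_G(g_1,g_2)=|g_1^{-1}g_2|_G$. Since $\iota$ is a homomorphism, $d_G(\iota h_1,\iota h_2)=|\iota(h_1^{-1}h_2)|_G$. So the quasi-isometric embedding condition for $\iota$ is equivalent to the existence of $\lambda\geq 1$, $\epsilon\geq 0$ with
$$\tfrac{1}{\lambda}|h|_H-\epsilon\leq |h|_G\leq \lambda|h|_H+\epsilon \quad\text{for all } h\in H.$$

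First I will observe that the upper inequality always holds. Let $L=\max_{y\in Y}|y|_G$; replacing each letter of a geodesic word on $Y$ by a word on $X$ of length at most $L$ gives $|h|_G\leq L|h|_H$. Consequently, being a quasi-isometric embedding is equivalent to the lower inequality alone, namely $|h|_H\leq \lambda|h|_G+\lambda\epsilon$. In particular it does not matter which generating sets are chosen, because changing them changes each word metric only up to bi-Lipschitz equivalence.

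Next I compare the lower inequality with undistortedness.
\begin{itemize}
\item Suppose the lower inequality holds. Then every $h$ with $|h|_G\leq n$ has $|h|_H\leq \lambda n+\lambda\epsilon$, so $\Dist_H^G(n)\preceq n$.
\item Also $n\preceq \Dist_H^G(n)$ holds trivially, up to the usual finite adjustments, because the inequality $|h|_G\leq L|h|_H$ from the first step shows $H$ contains elements of every large $H$-length with controlled $G$-length. Hence $\Dist_H^G(n)\simeq n$.
\item Conversely, if $\Dist_H^G(n)\preceq n$, there is $C$ with $\Dist_H^G(n)\leq C(Cn+C)+Cn+C$. Applying this with $n=|h|_G$ gives $|h|_H\leq \Dist_H^G(|h|_G)\leq (C^2+C)|h|_G+C^2+C$, which is exactly the lower inequality.
\end{itemize}

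For the \enquote{in particular}, I apply the equivalence twice, to $H\leq K$ and to $K\leq G$. This gives $|h|_H\leq \lambda_1|h|_K+\epsilon_1$ and $|k|_K\leq\lambda_2|k|_G+\epsilon_2$. Substituting $k=h$ yields $|h|_H\leq\lambda_1\lambda_2|h|_G+\lambda_1\epsilon_2+\epsilon_1$. Together with the automatic upper bound from the first step, this shows $H\hookrightarrow G$ is a quasi-isometric embedding, and hence $\Dist_H^G(n)\simeq n$. This is the same as saying that a composition of quasi-isometric embeddings is one.

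There is no real obstacle in this argument. The only care needed is in translating between the $\simeq$ convention of Definition \ref{equivRel} and explicit affine constants, as done in the third bullet above.
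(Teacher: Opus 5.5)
Your proposal is correct and matches the paper, which records this as a standard fact justified only by the remark that a composition of quasi-isometric embeddings is a quasi-isometric embedding; composing the two affine lower bounds $|h|_H\leq\lambda_1|h|_K+\epsilon_1$ and $|k|_K\leq\lambda_2|k|_G+\epsilon_2$, as you do, is exactly that remark made explicit. One small correction: the bound $n\preceq \Dist_H^G(n)$ holds automatically because of the additive $Cn+C$ term in Definition \ref{equivRel}, so $\Dist_H^G(n)\simeq n$ is equivalent to $\Dist_H^G(n)\preceq n$, and you do not need your claim about elements of every large $H$-length, which would in any case require $H$ to be infinite.
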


\begin{fact}\label{AmalgamFactorDist} If $C$ is an undistorted subgroup of two groups $G_1$ and $G_2$, then both $G_1$ and $G_2$ are undistorted in $G_1\ast_C G_2$ by \cite[Theorem 6.8]{dani2024fractional}.
    
\end{fact}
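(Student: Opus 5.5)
The statement is the special case of \cite[Theorem 6.8]{dani2024fractional} that we need. To prove it directly, I would use the action of $G=G_1\ast_C G_2$ on its Bass--Serre tree $T$ and reduce to a statement about excursions. Fix finite generating sets $S_1,S_2$ for $G_1,G_2$ and a generator $c$ of $C$. Let $\lambda$ be a constant such that $|c^m|_{S_j}\le \lambda|m|$ and $|m|\le \lambda|c^m|_{S_j}+\lambda$ for $j=1,2$; this exists because $C$ is undistorted in both factors. By symmetry it suffices to treat $G_1$. So let $w$ be a word of length $n$ on $S_1\cup S_2$ representing some $h\in G_1$. The goal is $|h|_{S_1}\le Kn$ for a constant $K$ independent of $n$.

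\textbf{Step 1 (decomposition into excursions).} Read $w$ letter by letter. The prefixes $w_k$ give a path of vertices $w_k v_1$ in $T$, where $v_1$ is the vertex fixed by $G_1$. Mark the times $k$ at which $w_k\in G_1$. Between consecutive marked times, $w$ is either a single $S_1$-letter or an excursion $w'=s\,w''\,s'$. In an excursion, $s,s'\in S_2^{\pm1}\smallsetminus C$, every proper nontrivial prefix lies outside $G_1$, and $w'$ represents an element of $G_1$. Since $T$ is a tree and the excursion path stays in one component of $T\smallsetminus\{v_1\}$, it leaves and returns along the same edge at $v_1$. Hence the element $h'$ represented by $w'$ stabilises the edge $C$, so $h'\in C$, say $h'=c^{m}$. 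It then suffices to show $|m|\le \mu\,|w'|$ for a uniform constant $\mu$. Summing over the excursions would give $|h|_{S_1}\le n+\lambda\mu n$.

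\textbf{Step 2 (bounding an excursion).} The word $w''$ represents $s^{-1}c^m s'^{-1}\in G_2$. Applying the same decomposition to $w''$ with the roles of $G_1$ and $G_2$ swapped writes $w''$ as a product of $S_2$-letters and sub-excursions into the $G_1$-side. Each sub-excursion represents some $c^{m_i}$. Replacing each by the $S_2$-word for $c^{m_i}$ and using the lower bound $|m|\le\lambda|c^m|_{S_2}+\lambda$ then gives a recursive bound on $|m|$.

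\textbf{Main obstacle.} This naive recursion compounds constants: each level of nesting multiplies by roughly $\lambda^2$, which gives only an exponential bound in the nesting depth. To avoid this I would not recurse on word length. Instead I would prove the uniform statement directly by a coarse-Lipschitz projection. For each $g\in G$ whose geodesic from $v_1$ to $gv_1$ exits $v_1$ through the edge $aC$ (with $a\in G_1$), define $\pi(g)\in aC$, chosen by nearest-point projection in the edge coset. Set $\pi(g)=g$ when $g\in G_1$.

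The key claim is that $|\pi(g)^{-1}\pi(gs)|_{S_1}$ is uniformly bounded for $s\in S_1\cup S_2$. When $g$ and $gs$ exit through the same edge, this should follow from undistortion of $C$ in $G_2$, since the two projections differ by an element of $C$ that is controlled inside the neighbouring vertex space. Given the claim, $|h|_{S_1}=|\pi(w)|_{S_1}\le Kn$ follows immediately by telescoping along the prefixes of $w$.

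I expect verifying the key claim to be the hardest part. In particular, one must check that the choice of $\pi$ inside each edge coset is coherent across the whole branch of $T$ beyond that edge. I would try to achieve this by defining $\pi$ via the $C$-component of a normal form. If that fails, I would fall back on citing \cite[Theorem 6.8]{dani2024fractional}, as the statement does.
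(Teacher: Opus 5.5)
Your first sentence is the whole of the paper's justification. The Fact is quoted from \cite[Theorem 6.8]{dani2024fractional} with no further argument, so insofar as you rest on that citation you agree with the paper. The self-contained route you sketch, however, has a genuine gap, and it sits exactly at the key claim.

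Restrict $\pi$ to $G_2$. For $b\in G_2\smallsetminus C$, the tree geodesic from $v_1$ to $bv_1$ leaves $v_1$ through the edge $C$ (i.e.\ $a=1$ in your notation), so $\pi(b)\in C$. Meanwhile $\pi$ is the identity on $C\subseteq G_1$. So the key claim asserts that $\pi|_{G_2}$ is a coarsely Lipschitz retraction of $G_2$ onto $C$. Globally it asserts that $\pi$ is a coarsely Lipschitz retraction of $G$ onto $G_1$, which is more than the Fact says.

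Undistortion of $C$ in $G_2$ only bounds $|c^m|_{S_2}$ from below, and it supplies no such retraction. Nearest-point projection is not even coarsely well defined in general: in $\ZZ^2$ with standard generators, every $(k,k)$ with $0\le k\le x$ is a nearest point of $\langle(1,1)\rangle$ to $(x,0)$. Even if you grant coarsely $L$-Lipschitz retractions of $G_1$ and $G_2$ onto $C$, defining $\pi$ coherently on a branch means composing them along nested excursions. Without further input, that brings back the factor-per-level compounding you had identified.

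The compounding in your Step 2 is better attacked by normalising than by projecting. Let $\tau_j=\lim_k|c^k|_{S_j}/k>0$. By subadditivity, $\tau_j|m|\le|c^m|_{S_j}$ for every $m$. Take an excursion node into $G_2$ with $\ell_0$ letters of its own and sub-excursions of exponents $m_1,\dots,m_k$. This gives
\[
|m|\le\frac{\ell_0+\sum_i\bigl(|c^{m_i}|_{S_2}-\tau_2|m_i|\bigr)}{\tau_2}+\sum_i|m_i|,
\]
and symmetrically on the $G_1$ side, so the coefficient of $\sum_i|m_i|$ is exactly $1$.

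Suppose the defects $|c^k|_{S_j}-\tau_j|k|$ are uniformly bounded. This holds when $G_1,G_2$ are hyperbolic, in particular free, since then $|c^k|_{S_j}=\tau_j|k|+O(1)$. The nesting tree has at most $|w'|/2$ non-root nodes, so unwinding over it gives $|m|\le K|w'|$ with no compounding.

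For arbitrary $G_1,G_2$ one only knows the defect is $o(|m|)$, and this telescoping no longer closes up as it stands. Lemma \ref{amalgUndist} applies the Fact to $A,B\in\mathcal{A}_{r-1}$, which need not be hyperbolic. So the citation is doing real work there, and that is where your proposal, like the paper, should rest.
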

\begin{fact}\label{UndistortedCyclicsInFree}
    Since all finitely generated free groups are hyperbolic, cyclic subgroups thereof are undistorted.
\end{fact}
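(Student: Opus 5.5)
The plan is to prove the Fact directly from reduced words rather than quoting the general hyperbolic-group result, since in a free group the computation is explicit. Let $F$ be a finitely generated free group with free basis $\Lambda_1$, and let $H=\langle g\rangle\leq F$ be a cyclic subgroup. If $g=1$ there is nothing to prove, so assume $g\neq 1$. By our standing convention $H$ is then infinite cyclic, and we give it the generating set $\{g\}$. Distortion is well defined up to $\simeq$, so this choice of generators is harmless.

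\textbf{Step 1 (normal form).} Write the freely reduced word for $g$ as $g=ucu^{-1}$, where the product is freely reduced as written and $c\neq 1$ is cyclically reduced. This is obtained by repeatedly stripping off a letter and its inverse from the two ends of the reduced word until the first and last letters are no longer mutually inverse.

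\textbf{Step 2 (powers stay reduced).} For every $m\in\ZZ\smallsetminus\{0\}$ we have $g^m=uc^mu^{-1}$. We check that this word is freely reduced. Since $c$ is cyclically reduced, the word $c^m$ (or $c^{-1}$ repeated $|m|$ times) has no cancellation at the junctions between copies. The junctions with $u$ and $u^{-1}$ are the same ones that appear in $ucu^{-1}$, because $c^m$ begins and ends with the same letters as $c^{\pm1}$; and $ucu^{-1}$ was already reduced. It follows that
\[
|m|\;\le\; |m|\,|c|+2|u| \;=\; |g^m|_F \;\le\; |m|\,|g|_F .
\]

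\textbf{Step 3 (conclude).} Suppose $h=g^m$ satisfies $|h|_F\le n$. Then its word length in $H$ is $|m|\le n$, so $\Dist_H^F(n)\le n$. The reverse bound $n\preceq \Dist_H^F(n)$ is automatic for an infinite subgroup, because the elements $g^m$ with $|m|\le n/|g|$ have $F$-length at most $n$ and $H$-length up to $n/|g|$. Hence $\Dist_H^F(n)\simeq n$.

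There is no real obstacle here. The only point needing care is the reducedness check in Step 2 at the junctions with $u$, which is handled by choosing $u$ maximal in Step 1. Alternatively, one could cite the standard theorem that infinite-order elements of hyperbolic groups generate quasi-geodesic cyclic subgroups, which is exactly what the phrasing of the Fact alludes to. Combined with Fact~\ref{QIEmbedd}, the same argument also shows that $H$ is quasi-isometrically embedded.
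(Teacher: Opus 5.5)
Your argument is correct, but it is not the route the paper takes. The paper gives no computation at all. It simply invokes the general theorem that infinite-order elements of a hyperbolic group generate undistorted (quasi-geodesic) cyclic subgroups, together with the hyperbolicity of finitely generated free groups.

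You instead verify the claim directly from free reduction. You write $g=ucu^{-1}$ with $c$ cyclically reduced and check that $uc^mu^{-1}$ is freely reduced for every $m\neq 0$. Your junction check also works for negative $m$, since $uc^{-1}u^{-1}$ is the formal inverse of the reduced word $ucu^{-1}$. This yields the exact formula $|g^m|_F=|m|\,|c|+2|u|$.

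Your route has several advantages. It is self-contained and gives explicit constants. It also proves torsion-freeness along the way, so you do not actually need the ``standing convention'' to know that $H$ is infinite. It is the free-group analogue of the normal-form argument the paper itself uses in Lemma~\ref{amalgUndist}, via Corollary~\ref{normalFormFacts}\eqref{powerLengths}.

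The paper's citation, by contrast, costs nothing and applies verbatim to any hyperbolic group. That is the generality the paper has in mind when it suggests replacing freeness with torsion-free hyperbolicity in Theorem~\ref{SecondMainTheorem}. Your computation does not extend beyond free groups.
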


\section{Diagrams}\label{secDiagr}
 Annular diagrams, also known as Schupp diagrams, are a powerful tool for studying conjugator length in finitely presented groups. We assume the reader is familiar with them, along with van Kampen diagrams, at the level presented in \cite{lyndon2001combinatorial,BRS}. That said, there are several technical aspects of which we make special note.

 First, in a cyclic-HNN-extension $G'$ of $G$ with stable letter $t$, the only relation involving $t$ is of the form $tut^{-1}=v$. Therefore in a reduced annular (or van Kampen) diagram over $G'$, edges labeled $t$ appear only in chains of cells corresponding to that relation, called $t$-corridors. Moreover, in our contexts, the words along each side of the $t$-corridor will equal (and often be identical to) powers of $u$ and $v$, as shown in Figure \ref{fig:tcor}. Accordingly, for all $r\geq 1$ and $G\in \mathcal{A}_r$, diagrams over $\mathcal{F}(G)=\langle \Lambda_0, t_1,\ldots, t_M\mid t_ix_it_i^{-1}=y_i\rangle$ are comprised entirely of $t_i$ corridors for $i=1,\ldots, M$ by Lemma \ref{iterFundGrpPresentation}; one subtlety is that a $t_i$-corridor can begin and end at the boundary of a diagram, or (depending on $G$) on the side of some $t_j$-corridor for $j>i$.

\begin{figure}
    \centering
\tikzset{every picture/.style={line width=0.75pt}} %set default line width to 0.75pt        

\begin{tikzpicture}[x=0.75pt,y=0.75pt,yscale=-1,xscale=1]
%uncomment if require: \path (0,300); %set diagram left start at 0, and has height of 300

%Straight Lines [id:da2653546135902227] 
\draw    (100,100) -- (128,100) ;
\draw [shift={(130,100)}, rotate = 180] [color={rgb, 255:red, 0; green, 0; blue, 0 }  ][line width=0.75]    (10.93,-3.29) .. controls (6.95,-1.4) and (3.31,-0.3) .. (0,0) .. controls (3.31,0.3) and (6.95,1.4) .. (10.93,3.29)   ;
%Straight Lines [id:da6923336345215882] 
\draw    (130,100) -- (178,100) ;
\draw [shift={(180,100)}, rotate = 180] [color={rgb, 255:red, 0; green, 0; blue, 0 }  ][line width=0.75]    (10.93,-3.29) .. controls (6.95,-1.4) and (3.31,-0.3) .. (0,0) .. controls (3.31,0.3) and (6.95,1.4) .. (10.93,3.29)   ;
%Straight Lines [id:da7860598783984063] 
\draw    (180,100) -- (228,100) ;
\draw [shift={(230,100)}, rotate = 180] [color={rgb, 255:red, 0; green, 0; blue, 0 }  ][line width=0.75]    (10.93,-3.29) .. controls (6.95,-1.4) and (3.31,-0.3) .. (0,0) .. controls (3.31,0.3) and (6.95,1.4) .. (10.93,3.29)   ;
%Straight Lines [id:da09832822938310692] 
\draw    (230,100) -- (278,100) ;
\draw [shift={(280,100)}, rotate = 180] [color={rgb, 255:red, 0; green, 0; blue, 0 }  ][line width=0.75]    (10.93,-3.29) .. controls (6.95,-1.4) and (3.31,-0.3) .. (0,0) .. controls (3.31,0.3) and (6.95,1.4) .. (10.93,3.29)   ;
%Straight Lines [id:da210509234895913] 
\draw    (280,100) -- (328,100) ;
\draw [shift={(330,100)}, rotate = 180] [color={rgb, 255:red, 0; green, 0; blue, 0 }  ][line width=0.75]    (10.93,-3.29) .. controls (6.95,-1.4) and (3.31,-0.3) .. (0,0) .. controls (3.31,0.3) and (6.95,1.4) .. (10.93,3.29)   ;
%Straight Lines [id:da5791163914604911] 
\draw    (350,150) -- (349.96,130) ;
%Straight Lines [id:da0426377931542925] 
\draw    (100,100) -- (100,128) ;
\draw [shift={(100,130)}, rotate = 270] [color={rgb, 255:red, 0; green, 0; blue, 0 }  ][line width=0.75]    (10.93,-3.29) .. controls (6.95,-1.4) and (3.31,-0.3) .. (0,0) .. controls (3.31,0.3) and (6.95,1.4) .. (10.93,3.29)   ;
%Straight Lines [id:da6783546585838545] 
\draw    (300,150.1) -- (299.96,130.1) ;
%Straight Lines [id:da733120230242092] 
\draw    (150,100) -- (150,128) ;
\draw [shift={(150,130)}, rotate = 270] [color={rgb, 255:red, 0; green, 0; blue, 0 }  ][line width=0.75]    (10.93,-3.29) .. controls (6.95,-1.4) and (3.31,-0.3) .. (0,0) .. controls (3.31,0.3) and (6.95,1.4) .. (10.93,3.29)   ;
%Straight Lines [id:da8393751891234464] 
\draw    (250,150.19) -- (250,130) ;
%Straight Lines [id:da2280655540131571] 
\draw    (200,100) -- (200,128) ;
\draw [shift={(200,130)}, rotate = 270] [color={rgb, 255:red, 0; green, 0; blue, 0 }  ][line width=0.75]    (10.93,-3.29) .. controls (6.95,-1.4) and (3.31,-0.3) .. (0,0) .. controls (3.31,0.3) and (6.95,1.4) .. (10.93,3.29)   ;
%Straight Lines [id:da7589532292296766] 
\draw    (330,100) -- (350,100) ;
%Straight Lines [id:da23244234864110913] 
\draw    (349.96,149.52) -- (321.96,149.57) ;
\draw [shift={(319.96,149.58)}, rotate = 359.89] [color={rgb, 255:red, 0; green, 0; blue, 0 }  ][line width=0.75]    (10.93,-3.29) .. controls (6.95,-1.4) and (3.31,-0.3) .. (0,0) .. controls (3.31,0.3) and (6.95,1.4) .. (10.93,3.29)   ;
%Straight Lines [id:da9102235473606701] 
\draw    (319.96,149.58) -- (271.96,149.67) ;
\draw [shift={(269.96,149.67)}, rotate = 359.89] [color={rgb, 255:red, 0; green, 0; blue, 0 }  ][line width=0.75]    (10.93,-3.29) .. controls (6.95,-1.4) and (3.31,-0.3) .. (0,0) .. controls (3.31,0.3) and (6.95,1.4) .. (10.93,3.29)   ;
%Straight Lines [id:da8479257875969572] 
\draw    (269.96,149.67) -- (221.96,149.77) ;
\draw [shift={(219.96,149.77)}, rotate = 359.89] [color={rgb, 255:red, 0; green, 0; blue, 0 }  ][line width=0.75]    (10.93,-3.29) .. controls (6.95,-1.4) and (3.31,-0.3) .. (0,0) .. controls (3.31,0.3) and (6.95,1.4) .. (10.93,3.29)   ;
%Straight Lines [id:da8375141501647427] 
\draw    (219.96,149.77) -- (171.96,149.86) ;
\draw [shift={(169.96,149.87)}, rotate = 359.89] [color={rgb, 255:red, 0; green, 0; blue, 0 }  ][line width=0.75]    (10.93,-3.29) .. controls (6.95,-1.4) and (3.31,-0.3) .. (0,0) .. controls (3.31,0.3) and (6.95,1.4) .. (10.93,3.29)   ;
%Straight Lines [id:da277148519420356] 
\draw    (169.96,149.87) -- (121.96,149.96) ;
\draw [shift={(119.96,149.96)}, rotate = 359.89] [color={rgb, 255:red, 0; green, 0; blue, 0 }  ][line width=0.75]    (10.93,-3.29) .. controls (6.95,-1.4) and (3.31,-0.3) .. (0,0) .. controls (3.31,0.3) and (6.95,1.4) .. (10.93,3.29)   ;
%Straight Lines [id:da7671820320868339] 
\draw    (119.96,149.96) -- (99.96,150) ;
%Straight Lines [id:da09762087936947772] 
\draw    (250,100) -- (250,128) ;
\draw [shift={(250,130)}, rotate = 270] [color={rgb, 255:red, 0; green, 0; blue, 0 }  ][line width=0.75]    (10.93,-3.29) .. controls (6.95,-1.4) and (3.31,-0.3) .. (0,0) .. controls (3.31,0.3) and (6.95,1.4) .. (10.93,3.29)   ;
%Straight Lines [id:da9668072536009613] 
\draw    (300,100) -- (300,128) ;
\draw [shift={(300,130)}, rotate = 270] [color={rgb, 255:red, 0; green, 0; blue, 0 }  ][line width=0.75]    (10.93,-3.29) .. controls (6.95,-1.4) and (3.31,-0.3) .. (0,0) .. controls (3.31,0.3) and (6.95,1.4) .. (10.93,3.29)   ;
%Straight Lines [id:da044257031828427884] 
\draw    (350,100) -- (350,128) ;
\draw [shift={(350,130)}, rotate = 270] [color={rgb, 255:red, 0; green, 0; blue, 0 }  ][line width=0.75]    (10.93,-3.29) .. controls (6.95,-1.4) and (3.31,-0.3) .. (0,0) .. controls (3.31,0.3) and (6.95,1.4) .. (10.93,3.29)   ;
%Straight Lines [id:da7543883893141126] 
\draw    (200,150.29) -- (200,130) ;
%Straight Lines [id:da6450990306494833] 
\draw    (150,150.38) -- (150,130) ;
%Straight Lines [id:da05097394120502807] 
\draw    (100,150.48) -- (100,130) ;

% Text Node
\draw (102,118.4) node [anchor=north west][inner sep=0.75pt]  [font=\small]  {$t$};
% Text Node
\draw (152,118.4) node [anchor=north west][inner sep=0.75pt]  [font=\small]  {$t$};
% Text Node
\draw (202,118.4) node [anchor=north west][inner sep=0.75pt]  [font=\small]  {$t$};
% Text Node
\draw (252,118.4) node [anchor=north west][inner sep=0.75pt]  [font=\small]  {$t$};
% Text Node
\draw (302,118.4) node [anchor=north west][inner sep=0.75pt]  [font=\small]  {$t$};
% Text Node
\draw (352,118.4) node [anchor=north west][inner sep=0.75pt]  [font=\small]  {$t\ \ \ \ \cdots $};
% Text Node
\draw (111.25,82.65) node [anchor=north west][inner sep=0.75pt]  [font=\small]  {$v$};
% Text Node
\draw (161,82.4) node [anchor=north west][inner sep=0.75pt]  [font=\small]  {$v$};
% Text Node
\draw (211,82.4) node [anchor=north west][inner sep=0.75pt]  [font=\small]  {$v$};
% Text Node
\draw (261,82.4) node [anchor=north west][inner sep=0.75pt]  [font=\small]  {$v$};
% Text Node
\draw (311,82.4) node [anchor=north west][inner sep=0.75pt]  [font=\small]  {$v$};
% Text Node
\draw (111,152.4) node [anchor=north west][inner sep=0.75pt]  [font=\small]  {$u$};
% Text Node
\draw (161,152.4) node [anchor=north west][inner sep=0.75pt]  [font=\small]  {$u$};
% Text Node
\draw (211,152.4) node [anchor=north west][inner sep=0.75pt]  [font=\small]  {$u$};
% Text Node
\draw (261,152.4) node [anchor=north west][inner sep=0.75pt]  [font=\small]  {$u$};
% Text Node
\draw (311,152.4) node [anchor=north west][inner sep=0.75pt]  [font=\small]  {$u$};
% Text Node
\draw (53.67,118.4) node [anchor=north west][inner sep=0.75pt]  [font=\small]  {$\cdots $};

\end{tikzpicture}

    \caption{$t$-corridor}
    \label{fig:tcor}
\end{figure}

 Second, suppose in an annular diagram $\Omega$ there are two simple, nested, loops $\gamma_1,\gamma_2$ around the inner ``hole" (possibly with non-empty intersection), such that that $\gamma_1,\gamma_2$ do not cross each other anywhere, and that $\gamma_1,\gamma_2$ are labeled by the same words. Then $\gamma_1,\gamma_2$ bound a sub-annulus of $\Omega$ which can be excised, after which ``cutting and stitching" (see \cite{lyndon2001combinatorial} for the precise meaning of this phrase) along $\gamma_1,\gamma_2$ gives a new annular diagram $\Omega'$ with the same words along the boundary components as $\Omega$.
\section{Algebraic Lemmas} In this section we take a purely algebraic, or classically group-theoretic, approach to proving a number of key Lemmas which will be used in our main argument. 
\subsection{Conjugacy in Amalgamated Free Products}

We record three Lemmas that will be of critical use.
\begin{lemma}[Corollary 4.4.1 in \cite{Magnus_Karrass_Solitar}]\label{amalgamNormalForm} Let $G=A\ast_C B$ be an arbitrary amalgamated free product and let $T_A, T_B$ be right transversals of $C$ in $A,B$ respectively. For all $g\in G$, there exist unique $\ell, c\in C, g_1,g_2,\ldots, g_\ell\in T_A\cup T_B$ such that $g=cg_1g_2\cdots g_\ell$ and $g_i,g_{i+1}$ come from different transversals. 
\end{lemma}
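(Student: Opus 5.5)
The plan is to construct the normal form by a reduction argument and then prove uniqueness by induction on $\ell$, using the standard fact that $C$ embeds in $A$ and $B$ (so $G$ has no unexpected collapse). I will assume throughout that each transversal contains $1$ as the representative of the coset $C$ itself. Under that assumption the requirement is $g_i\neq 1$ for every $i$, so the $g_i$ are nontrivial coset representatives.

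\emph{Existence.} Every $g\in G$ is a product $a_1b_1a_2b_2\cdots$ of elements alternating between $A$ and $B$. I will process this product from the right. Write the last factor, say $h\in A$, as $h=c\,r$ with $c\in C$ and $r\in T_A$, which uses only that the $T_A$ are right coset representatives of $C$. Then absorb $c$ into the preceding factor; since $C\le B$ as well, that factor stays in $B$. Repeating this leftward yields $g=c\,g_1\cdots g_\ell$ with each $g_i\in T_A\cup T_B$.

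Two things can go wrong in this process. If some $g_i=1$, I delete it, which merges two neighbours from the same factor; I then re-decompose the merged element and restart. Since the total number of factors strictly drops each time, this terminates. The process also produces alternating factors, because consecutive factors of the original product came from different groups and absorption preserves this.

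\emph{Uniqueness.} This is the main step. I would use van der Waerden's permutation trick. Let $\Omega$ be the set of all tuples $(c,g_1,\dots,g_\ell)$ satisfying the normal-form conditions. For $a\in A$ I define a permutation of $\Omega$ by the rule
\[
a\cdot(c,g_1,\dots,g_\ell)=\text{normal form of } (ac)g_1\cdots g_\ell .
\]
This is computed explicitly as follows. If $g_1\in T_A$, write $acg_1=c'r$ with $r\in T_A$, then replace $(c,g_1)$ by $(c',r)$, deleting $r$ if $r=1$. If $g_1\notin T_A$, write $ac=c'r$ and prepend $r$ when $r\neq 1$. The action of $B$ is defined symmetrically.

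I must then check three things. First, each rule defines an action of $A$ (respectively $B$) on $\Omega$; this is a routine case check that uses uniqueness of the coset decomposition in $A$. Second, the two actions agree on $C$, where both reduce to $c\mapsto c_0c$. By the universal property (Lemma \ref{univAmalgams}, in its general form), these give a homomorphism $G\to\mathrm{Sym}(\Omega)$. Third, the element $c\,g_1\cdots g_\ell$ sends the empty tuple $(1)$ to $(c,g_1,\dots,g_\ell)$, which I would check by induction acting from the right. Hence two normal forms representing the same $g$ produce the same tuple, which gives uniqueness of $\ell$, $c$ and the $g_i$.

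The main obstacle is the case check in the first item: verifying that $(aa')\cdot\omega=a\cdot(a'\cdot\omega)$ when a representative $r$ becomes $1$ and is deleted. That deletion changes which factor leads the tuple. Since this is a classical result, it is acceptable to cite \cite{Magnus_Karrass_Solitar} directly for those details.
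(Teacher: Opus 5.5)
Your proof is correct. The paper gives no argument of its own here; the lemma is quoted from Magnus--Karrass--Solitar. What you have written is the classical proof behind that citation: existence by pushing $C$-factors leftward through an alternating product, and uniqueness by van der Waerden's permutation representation of $G$ on the set $\Omega$ of admissible tuples.

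Your version makes one thing explicit that the printed statement omits: the $g_i$ must not lie in $C$. Without that condition uniqueness is false. If $r_0\in T_A$ represents the coset $C$, then $c=(cr_0^{-1})\,r_0$ gives two forms, one with $\ell=0$ and one with $\ell=1$. Once the condition is phrased as $g_i\notin C$, your normalization $1\in T_A$, $1\in T_B$ costs nothing, because changing the representative of $C$ does not change the set of admissible tuples.

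The case check you single out as the main obstacle can be avoided entirely. Let $X_A$ be the set of admissible sequences $(g_1,\dots,g_\ell)$, including the empty one, whose first term (if any) lies in $T_B$. Send $(c,g_1,\dots,g_\ell)$ to $(cg_1,(g_2,\dots,g_\ell))$ when $g_1\in T_A$, and to $(c,(g_1,\dots,g_\ell))$ otherwise. This is a bijection $\Omega\to A\times X_A$, using only that $T_A$ is a right transversal of $C$ in $A$. Your rule for the action of $a\in A$ is exactly left multiplication on the $A$-coordinate, transported through this bijection, so the action axioms are automatic.

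Finally, you do not need any prior knowledge that $G$ ``does not collapse.'' The argument uses only the universal property of the amalgam, and the embeddings of $A$ and $B$ in $G$ come out of it as a consequence rather than serving as an input.
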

For ease of notation, we write  $||g||=\ell$ and note that $||g||=0$ implies $g\in C$, $||g||=1$ implies $g\in (A\cup B)\smallsetminus C$. The following Corollary follows from the proof of \cite[Corollary 4.4.1]{Magnus_Karrass_Solitar}; we make explicit note of it for future reference.

\begin{cor}\label{normalFormFacts}
    For all $g\in A\ast_C B$, the following hold:
    \begin{enumerate}\item \label{compareLengths}$||g||\leq |g|$;\item\label{powerLengths}  if $||g||$ is even and at least 2, then $||g^k||=|k|\cdot ||g||$; \item\label{cycReducedLengths} $g$ is conjugate to some $g'$ such that either $||g'||\leq 1$ or $||g'||$ is even, and such that $c(g,g')\leq |g|$.\end{enumerate} 
\end{cor}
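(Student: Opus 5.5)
Throughout I take $|\cdot|$ to be word length with respect to the generating set $X_A\sqcup X_B$ of the default presentation of $A\ast_C B$, so that every letter lies in $A$ or in $B$. All three items follow from the reduction process in the proof of \cite[Corollary 4.4.1]{Magnus_Karrass_Solitar}. Recall that process: given any expression $g=h_1h_2\cdots h_m$ with each $h_j\in A\cup B$, one merges adjacent factors from the same factor group. Then, working from the right, one rewrites each factor as $c'\cdot(\text{transversal element})$ and pushes the $C$-part leftward. Each step can only decrease the number of syllables, and the output is the normal form $cg_1\cdots g_\ell$.

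\textbf{Item (\ref{compareLengths}).} Take a geodesic word $e_1\cdots e_{|g|}$ for $g$; each letter is an element of $A$ or of $B$. Applying the reduction above to this expression with $m=|g|$ gives a normal form with at most $|g|$ transversal factors. By uniqueness in Lemma \ref{amalgamNormalForm}, this means $||g||\le |g|$.

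\textbf{Item (\ref{powerLengths}).} Suppose $||g||=\ell\ge 2$ is even, and write $g=cg_1\cdots g_\ell$. Since $\ell$ is even, $g_1$ and $g_\ell$ lie in different factors.
\begin{itemize}
\item For $k>0$, concatenate $k$ copies of this expression. At each junction the factor $g_\ell$ is followed by $c g_1$, where $cg_1$ lies in the factor of $g_1$, which differs from that of $g_\ell$. So no two adjacent syllables come from the same factor, and no syllable lies in $C$.
\item The standard reduced-form theorem (the alternating-product version of Lemma \ref{amalgamNormalForm}) then says this alternating product has normal-form length exactly the number of syllables. Equivalently, the right-to-left transversal rewriting never merges syllables, so $||g^k||=k\ell$.
\item For $k<0$, apply the same argument to $g^{-1}=g_\ell^{-1}\cdots g_1^{-1}c^{-1}$. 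Its syllables again alternate between the factors, and $c^{-1}$ can be absorbed into $g_1^{-1}$ without changing which factor it lies in.
\item For $k=0$ the statement is trivial.
\end{itemize}

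\textbf{Item (\ref{cycReducedLengths}).} If $||g||\le1$ or $||g||$ is even, take $g'=g$. Otherwise $\ell=||g||\ge3$ is odd, so $g_1$ and $g_\ell$ lie in the same factor. Conjugate by $g_\ell$:
\[
g_\ell\, g\, g_\ell^{-1}=(g_\ell c g_1)g_2\cdots g_{\ell-1}.
\]
The element $g_\ell c g_1$ lies in a single factor. Either it lies in $C$, in which case it merges into $g_2$ and the syllable count drops by 2 and stays odd, or it is a single non-$C$ syllable and $||\cdot||=\ell-1$ is even. Iterate: at each step conjugate by the current last transversal letter. The length drops by at least 1 each time, and the process stops once the length is even or at most 1.

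The main obstacle is bounding the length of the total conjugator, since the transversal letters $g_j$ have no a priori bound on their word length. To handle this, do not conjugate by normal-form syllables. Instead, take a geodesic word $w$ for $g$ and group it into maximal syllables $w=s_1s_2\cdots s_m$, each $s_j$ a maximal subword in a single factor, so that $\sum|s_j|=|g|$.
\begin{itemize}
\item Cyclically reduce at the word level. While $m$ is odd and $m\ge3$, conjugate by the last syllable $s_m$. This fuses $s_m s_1$ into one syllable, and if the fused syllable lies in $C$, it fuses further with $s_2$.
\item The accumulated conjugator is a product of distinct terminal subwords of $w$. Its total length is at most the sum of the lengths of the removed pieces, which is at most $|g|$ because each letter of $w$ is moved at most once.
\item Compare with normal form: the syllable count of an alternating expression with no syllables in $C$ equals $||\cdot||$. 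So the final $g'$ satisfies either $||g'||\le1$ or $||g'||$ even, and $c(g,g')\le |g|$.
\end{itemize}

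Before relying on the last bullet, I would check that each fused syllable either lies outside $C$ or is absorbed into a neighbour, so that the syllable count really does equal $||g'||$.
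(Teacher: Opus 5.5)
Items (1) and (2) are fine. They are the standard normal-form argument that the paper does not write out, citing instead the proof of \cite[Corollary 4.4.1]{Magnus_Karrass_Solitar}.

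The gap is in item (3). You split a geodesic word $w$ into maximal single-alphabet subwords $s_1\cdots s_m$ and at the end equate the syllable count with $||g'||$, checking only that \emph{fused} syllables avoid $C$. But an original syllable $s_j$ of a geodesic word can itself represent an element of $C$, because an element of $C$ may be far shorter in one factor's generators than in the other's. If such a syllable sits at an end of the expression, it merges with only one neighbour, which flips the parity of the count.

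Concretely, take $A=F(a,x)$ and $B=F(b,y)$ amalgamated along $a=b^5$, and let $w=ayxy$. This word is geodesic: a word of length $3$ for $g$ would be an alternating product of three letters outside $C$, and uniqueness of reduced forms would force its first letter to be $b^5yb^{5j}$ for some $j$. The alphabet-syllables of $w$ are $a,y,x,y$, so $m=4$ and your procedure stops with $g'=g$. Yet $a\in C$, and $g=(ay)\,x\,y$ with $ay\in B\smallsetminus C$, so $||g'||=3$. The conclusion of item (3) therefore fails for the $g'$ your procedure produces.

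The repair is to regroup before cyclically reducing. While some interior $s_j$ represents an element of $C$, replace $s_{j-1}s_js_{j+1}$ by its concatenation; while an end syllable does, concatenate it with its single neighbour. This terminates with $w=S_1\cdots S_\ell$, where each $S_j$ is a contiguous subword of $w$ representing an element of $A\smallsetminus C$ or $B\smallsetminus C$, and consecutive $S_j$ lie in different factors. By the reduced form theorem, $\ell=||g||$ (or $\ell=1$ and $||g||\le 1$).

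Your cyclic reduction then works verbatim on the $S_j$. If $\ell\ge 3$ is odd, conjugate by $S_\ell$. Either $S_\ell S_1\notin C$, and you get $\ell-1$ alternating syllables outside $C$. Or $S_\ell S_1\in C$, in which case $S_\ell S_1S_2$ is a single syllable lying outside $C$ in the factor of $S_2$. You then have $\ell-2$ syllables whose first and last again lie in the same factor, so you continue by conjugating by $S_{\ell-1}$.

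Only the first syllable is ever fused, so every conjugator used is an original $S_j$. The accumulated conjugator is therefore a terminal subword $S_j\cdots S_\ell$ of $w$, which gives $c(g,g')\le |g|$.
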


\begin{lemma}[Theorem 4.6 in \cite{Magnus_Karrass_Solitar}]\label{amalgamConjugacyClassification} 
    Let $G=A\ast_CB$ be an arbitrary amalgamated free product, and suppose $g\in G$ is cyclically reduced and has $||g||\in\{0,1\}\cup\{2,4,6,\ldots\}$. Call $A,B$ the \textit{vertex subgroups of $G$}. Then:
    \begin{enumerate}
        \item\label{conjugateToEdgeGroup} If $g$ is conjugate to some $c\in C$, then $||g||\leq 1$ and there exists some sequence $c_1,c_2,\ldots, c_m=c$ of elements of  $cC$ such that (taking $c_0=g$) $c_i$ and $c_{i+1}$ are conjugate in either $A$ or $B$, and the groups in which consecutive $c_i$'s are conjugate alternate.
        \item\label{conjugateToVertexSubgroup} If $g$ is conjugate to some $h\in (A\cup B)\smallsetminus C$, but not conjugate to any element of $C$, then $g$ is in the same vertex subgroup as $h$, and conjugate to $h$ in that subgroup.
        \item\label{conjugateToNeither} If $g$ is conjugate to $h=p_1p_2\cdots p_r$, where the $p_i$'s are from alternating vertex subgroups and $p_1,p_r$ come from different vertex subgroups, then $g$ is conjugate to $h$ by cyclically permuting the $p_i$'s and then conjugating by an element of $C$. In particular, if $g$ is conjugate to $h$ with $||h||\geq 2$ even, then $||g||=||h||$.
    \end{enumerate}
\end{lemma}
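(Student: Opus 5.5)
The plan is to argue directly from the normal form of Lemma \ref{amalgamNormalForm} by induction on the length $||w||$ of a conjugator $w$ with $wgw^{-1}=h$. Write $w=w_1w_2\cdots w_k$ as an alternating product of elements of $(A\cup B)\smallsetminus C$. The central computation is to understand the normal form of $w_k g w_k^{-1}$ for a single syllable $w_k$, say $w_k\in A\smallsetminus C$, in each of the three regimes $||g||=0$, $||g||=1$, and $||g||\geq 2$ even.

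\emph{Case $||g||\geq 2$ even (item \eqref{conjugateToNeither}).} Write $g=p_1\cdots p_r$ with $p_1,p_r$ in different factors. Then one of $p_1$ or $p_r$ lies in $B$; say $p_r\in B$, so $p_1\in A$.
\begin{itemize}
\item The product $w_k p_1$ stays in $A$. Either it lies in $C$, or it is a single $A$-syllable.
\item The product $p_r w_k^{-1}$ never reduces, since $p_r\in B$ and $w_k^{-1}\in A$.
\end{itemize}
Hence $||w_kgw_k^{-1}||\geq r$, with equality exactly when $w_kp_1\in C$. In that case $w_kgw_k^{-1}$ equals, up to multiplication by an element of $C$, a cyclic permutation of the $p_i$'s. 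Otherwise the length grows to $r+1$, which is odd and $\geq 3$. Iterating this over the syllables of $w$, I would show the following: if $||wgw^{-1}||$ ever becomes $>r$, it cannot later return to an even length $\geq 2$ equal to that of a cyclically reduced $h$ unless the outer syllables of $w$ cancel against each other. Those cancellations amount precisely to cyclic permutation followed by conjugation by an element of $C$. The cleanest bookkeeping is: $wgw^{-1}$ is cyclically reduced with even length only if all but the last step is absorbed into cyclic permutations. This also yields $||g||=||h||$.

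\emph{Case $||g||\leq 1$ (items \eqref{conjugateToEdgeGroup} and \eqref{conjugateToVertexSubgroup}).} First, by the previous case, $g$ with $||g||\geq 2$ even cannot be conjugate to $h$ with $||h||\leq 1$. So conjugates of $C$ or of $(A\cup B)\smallsetminus C$ have $||g||\leq 1$. Now suppose $g\in A\smallsetminus C$ is not conjugate into $C$, and consider $wgw^{-1}$ with $w$ reduced.
\begin{itemize}
\item The innermost syllable $w_k$ must lie in $A$ for any reduction to happen, giving $w_kgw_k^{-1}\in A$.
\item This element is not in $C$, by hypothesis. So the next syllable, which lies in $B\smallsetminus C$, produces a word of length $3$. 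This word can never collapse again, since every further conjugation only adds syllables at both ends or cancels symmetrically.
\end{itemize}
Thus $h\in A\cup B$ forces $k\leq 1$ and $w\in A$, which gives \eqref{conjugateToVertexSubgroup}. For \eqref{conjugateToEdgeGroup}, run the same induction. Each time the partial conjugate lands in $C$, the next syllable may come from either factor, and the sequence of intermediate values in $C$ is the required chain $c_1,\ldots,c_m$. Consecutive syllables alternate factors, so the groups in which consecutive $c_i$ are conjugate alternate. Any intermediate value outside $C$ again grows irrecoverably.

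\emph{Main obstacle.} I expect the hard part to be the careful length bookkeeping in the even case: proving that once $w_kgw_k^{-1}$ has odd length $\geq 3$, further conjugation cannot produce a cyclically reduced element of even length. I would handle this by checking that a conjugate of a non-cyclically-reduced element of odd length $\geq 3$ is always of the form $u(\text{cyc.\ reduced})u^{-1}$ with $||u||\geq 1$. From there one reads off the cyclic permutation directly.
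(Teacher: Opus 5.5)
The paper gives no proof of this lemma; it quotes it from \cite{Magnus_Karrass_Solitar} (Theorem 4.6) and uses it as a black box. Your proposal proves it directly, and the argument is correct. It is essentially the classical argument behind the citation: induction on the number of syllables of a reduced conjugator $w=w_1\cdots w_k$, with two observations. First, conjugating a cyclically reduced element of even length $r\geq 2$ by one syllable can only interact with the end syllable lying in that syllable's factor. That interaction either lands in $C$, giving a cyclic permutation conjugated by an element of $C$, or the length becomes $r+1$. Second, an element of length one outside $C$ is either conjugated inside its own factor or grows to length $3$. The citation buys brevity; your route makes the lemma self-contained and shows exactly where reducedness of $w$ enters.

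A few spots should be tightened, though none is a real gap.

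\emph{The ``main obstacle'' is immediate.} The detour through writing elements as $u(\text{cyclically reduced})u^{-1}$ is unnecessary. Put $s_1=w_k, s_2=w_{k-1},\ldots$ and $g_j=s_jg_{j-1}s_j^{-1}$. Suppose $g_j\notin C$ and both end syllables of $g_j$ lie in the factor of $s_j$; this is what happens after the first unabsorbed step in the even case, and for a length-one $g_j$. Then $s_{j+1}$ lies in the other factor, so nothing cancels at either end. The length rises by exactly $2$, and the new ends lie in the factor of $s_{j+1}$. By alternation this persists, so the final length is odd and at least $3$. For the same reason, drop phrases such as ``unless the outer syllables of $w$ cancel against each other'' and ``cancels symmetrically'': $w$ is reduced, so no such cancellation ever occurs.

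\emph{Item (1) needs every intermediate value in $C$.} You need that every $g_j$ with $1\le j<k$ lies in $C$; otherwise consecutive members of your chain would not be conjugate inside a single factor. This is the same observation: a length-one $g_j$ necessarily lies in the factor of $s_j$, so it grows at the next step unless it is in $C$.

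\emph{Item (3) when $||g||\le 1$.} Here, and to phrase the conclusion in terms of the syllables of $h$ as the lemma does, run the even case starting from $h$ with conjugator $w^{-1}$. Your remark that even length $\geq 2$ cannot be conjugate to length $\leq 1$ already does this implicitly.
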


\begin{cor}\label{finiteOrderElements}
Let $G$ be as above and let $g\in G$ be a finite order element. Then $g$ is conjugate to some $g'$ with $||g'||\leq 1$, so $g$ is conjugate to an element of $A$ or $B$.
\end{cor}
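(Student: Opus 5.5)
Conjugate $g$ into a cyclically reduced form and apply Lemma \ref{amalgamConjugacyClassification}. By Corollary \ref{normalFormFacts}\eqref{cycReducedLengths}, $g$ is conjugate to some $g'$ with either $||g'||\leq 1$ or $||g'||$ even. Among all conjugates of $g$ with normal form length in $\{0,1\}\cup\{2,4,\ldots\}$, choose $g'$ cyclically reduced and of minimal $||\cdot||$. If $||g'||\leq 1$, then $g'\in A\cup B$ and we are done. It remains to rule out $||g'||\geq 2$ even.

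Suppose $||g'||=2k$ with $k\geq 1$. Since $g$ has finite order, so does $g'$, say $(g')^N=1$ with $N\geq 1$. By Corollary \ref{normalFormFacts}\eqref{powerLengths}, $||(g')^N||=N\cdot 2k\geq 2$. But $(g')^N=1\in C$, so $||(g')^N||=0$, a contradiction. Hence $||g'||\leq 1$, and $g'\in A\cup B$ (possibly in $C$).

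The only real content is the power-length formula in Corollary \ref{normalFormFacts}\eqref{powerLengths}. It holds because, for cyclically reduced $g'$ whose normal form begins and ends in different factors, concatenating copies of $g'$ produces no cancellation between the last syllable of one copy and the first syllable of the next. That is exactly why we must first pass to a conjugate with even $||\cdot||$, which Corollary \ref{normalFormFacts}\eqref{cycReducedLengths} supplies. I expect no serious obstacle beyond checking that this corollary applies to the chosen conjugate.

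Alternatively, one can invoke Lemma \ref{amalgamConjugacyClassification}\eqref{conjugateToNeither}. A conjugate $h$ of $g$ with $||h||\geq 2$ even has every power again of even length $\geq 2$, so no power of $h$ is trivial. This gives the same conclusion.
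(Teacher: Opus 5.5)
Your proof is correct and is essentially the paper's argument. You pass to the conjugate supplied by Corollary~\ref{normalFormFacts}\eqref{cycReducedLengths}, and if its length were even and at least $2$, Corollary~\ref{normalFormFacts}\eqref{powerLengths} would give $||(g')^N||\geq 2$, contradicting $(g')^N=1\in C$; the extra minimality and cyclic-reducedness choices are harmless but unnecessary, since a normal form of even length $\geq 2$ already begins and ends in different factors.
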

\begin{proof}
    This follows immediately from Corollary \ref{normalFormFacts}(\ref{powerLengths},\ref{cycReducedLengths}) and Lemma \ref{amalgamConjugacyClassification}\eqref{conjugateToNeither}.
\end{proof}
%For the following lemma, we adopt the notational convention that $ \mathcal{A}_{-1}=\mathcal{A}_0$, and observe that every element of $\mathcal{A}_0$ is (trivially) an amalgamated free product of two elements of $\mathcal{A}_{-1}$ along a cyclic subgroup.
\begin{lemma}\label{distinctPowerConj} For all $r\geq 0$ and $G\in \mathcal{A}_r$, $G$ is torsion free and no powers of a non-trivial element $g\in G$ are conjugate to each other unless their exponents have the same magnitude.

    % \begin{enumerate}
    
    % \item\label{Claim1}For all $u, v\in G$, if there exists $p_0,q_0$ such that $u^{p_0}$ and $v^{q_0}$ are conjugate, then there exists $p,q$ such that $u^p$ is conjugate to $v^q$ and $|q|\leq \lambda|u|$, where $\lambda$ is a constant depending only on $v$. 
    % \item\label{Claim2} No powers of $C$ are conjugate to each other.
    % \end{enumerate}
\end{lemma}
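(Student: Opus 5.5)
We argue by induction on $r$, using the fixed splitting $G=A\ast_C B$ with $A,B\in\mathcal{A}_{r-1}$ and $C=\langle x\rangle=\langle y\rangle$ infinite cyclic. The base case $r=0$ is standard. Free groups are torsion free. If $g\neq 1$ in a free group and $g^p$ is conjugate to $g^q$, then abelianizing is not enough, so we instead compare cyclically reduced lengths: if $g$ is conjugate to a cyclically reduced word $w\neq 1$, then $g^k$ is conjugate to the cyclically reduced word $w^k$, which has length $|k|\,|w|$. Conjugate elements have equal cyclically reduced length, so $|p|=|q|$.

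For the inductive step, torsion-freeness comes from Corollary \ref{finiteOrderElements}. A finite-order element of $G$ is conjugate into $A$ or $B$, and these are torsion free by induction, so the element is trivial.

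For the power statement, let $g\neq 1$ and suppose $g^p$ is conjugate to $g^q$ with $|p|\neq|q|$. Since $G$ is torsion free we may assume $p,q\neq 0$. By Corollary \ref{normalFormFacts}\eqref{cycReducedLengths} we replace $g$ by a conjugate with $\|g\|\le 1$ or $\|g\|$ even and at least $2$. Note that conjugating $g$ preserves the hypothesis.

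\emph{Hyperbolic case}, $\|g\|\ge 2$ even. By Corollary \ref{normalFormFacts}\eqref{powerLengths}, $\|g^k\|=|k|\,\|g\|$, and this is even and at least $2$. We need $g^p$ to be cyclically reduced in the sense of Lemma \ref{amalgamConjugacyClassification}. This holds because $g$ is cyclically reduced with even length, so its first and last syllables lie in different factors, and the same is true of $g^p$. Lemma \ref{amalgamConjugacyClassification}\eqref{conjugateToNeither} then gives $\|g^p\|=\|g^q\|$, hence $|p|=|q|$.

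\emph{Elliptic case}, $g\in A$ (the case $g\in B$ is symmetric). Suppose first that $g^p$ is not conjugate into $C$. Then by Lemma \ref{amalgamConjugacyClassification}\eqref{conjugateToVertexSubgroup}, $g^p$ and $g^q$ are conjugate within $A$, and induction in $A$ gives $|p|=|q|$. Otherwise $g^p$ is conjugate to some $c=x^m$, and then $g^q$ is also conjugate into $C$. We now reduce to proving the following claim.

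\textbf{Claim.} If $x^m$ and $x^{n}$ are conjugate in $G$, then $|m|=|n|$.

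Granting the claim, write $g^p\sim x^m$ in $G$. I would like to move this conjugacy into $A$. One route is to apply Lemma \ref{amalgamConjugacyClassification}\eqref{conjugateToEdgeGroup} to $g^p$ itself: since $g^p\in A\smallsetminus C$ or $g^p\in C$, the first step of the chain is a conjugacy in $A$, so $g^p\sim_A x^{m'}$ with $x^{m'}\sim_G x^m$. Similarly $g^q\sim_A x^{n'}$. From $g^p\sim_G g^q$ we get $x^{m'}\sim_G x^{n'}$, so the claim gives $|m'|=|n'|$.

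To finish, we need $|p|=|q|$ from $g^p\sim_A x^{m'}$ and $g^q\sim_A x^{\pm m'}$. Conjugating in $A$, we may assume $g^p=x^{m'}$. Then $g^q$ is conjugate in $A$ to $(g^p)^{\pm1}=g^{\pm p}$, and induction in $A$ gives $|q|=|p|$.

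\emph{Proof of the claim.} This is the main obstacle. Lemma \ref{amalgamConjugacyClassification}\eqref{conjugateToEdgeGroup} gives a chain $x^m=c_0,c_1,\dots,c_k=x^n$ in $C$, with consecutive terms conjugate in $A$ or in $B$. Each $c_i$ is a power of $x$ (or of $y$, which equals $x$). Inductively, conjugacy of $x^a$ and $x^b$ in $A$ forces $|a|=|b|$, and likewise in $B$. So $|\text{exponent}|$ is constant along the chain, giving $|m|=|n|$.

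The delicate point is checking that the chain lemma applies with $g=x^m\in C$ ($\|g\|=0$), and that all intermediate elements are genuinely powers of the single generator. Both hold since $C$ is cyclic. A secondary check is that the conjugacy in the elliptic case genuinely starts in the vertex group containing $g$, which is what the alternating chain provides.
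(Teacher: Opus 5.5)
Your proof is correct and follows essentially the paper's argument. The shared steps are induction on $r$, torsion-freeness via Corollary~\ref{finiteOrderElements}, the reduction via Corollary~\ref{normalFormFacts}, the split into an even-length case and an elliptic case, and the chain of Lemma~\ref{amalgamConjugacyClassification}\eqref{conjugateToEdgeGroup} combined with induction to show that powers of the edge generator are conjugate only up to sign.

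The one difference is in the final edge-group subcase. The paper avoids pulling conjugacies back into $A$: from $g^\ell\sim g^k\sim\gamma\in C$ it gets $\gamma^\ell\sim g^{k\ell}\sim g^{k^2}\sim\gamma^k$ and applies the edge-group claim directly. You instead use the first step of the chain to obtain $g^p\sim_A x^{m'}$ and $g^q\sim_A x^{\pm m'}$, then invoke the inductive hypothesis in $A$ once more. Both routes are valid.
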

\begin{proof}
    % We prove both claims simultaneously by induction on $k$.

    We proceed by induction on $r$. For the base case, $G$ is free and our claims follow straightforwardly. For $r\geq 1$,  let $g\in G=A\ast_CB$ with $C$ cyclic, $A,B\in \mathcal{A}_{r-1}$. Our first claim holds by Corollary \ref{finiteOrderElements} because $A$ and $B$ are torsion free. For our second claim, without loss of generality we may replace $g$ with $g'$ as given in Corollary \ref{normalFormFacts}\eqref{cycReducedLengths}. If $||g||>1$, then our claim follows from Corollary \ref{normalFormFacts}\eqref{powerLengths} and Lemma \ref{amalgamConjugacyClassification}\eqref{conjugateToNeither}. Alternatively, suppose that $||g||\leq 1$, so $g\in A\cup B$ (without loss of generality $g\in A$), and that $g^\ell$ is conjugate to $g^k$ in $G$. If neither $g^\ell$ nor $g^k$ is conjugate to an element of $C$, then $g^\ell$ and $g^k$ are conjugate in $A$ by Lemma \ref{amalgamConjugacyClassification}\eqref{conjugateToVertexSubgroup}, so $|\ell|=|k|$ by induction. Thus one, and hence both, of $g^\ell$ and $g^k$ are conjugate to some $\gamma\in C$. If $c$ is the generator of $C$ with $\gamma=c^m$, then Lemma \ref{amalgamConjugacyClassification}\eqref{conjugateToEdgeGroup} implies $\gamma$ can only be conjugate to itself and $c^{-m}$ by induction. Yet, $g^\ell$ being conjugate (in $G$) to $g^k$ means that $g^{k\ell}$ is conjugate to $g^{k^2}$. But $\gamma^{\ell}$ is conjugate to $g^{k\ell}$ and $\gamma^k$ is conjugate to $g^{k^2}$, hence $|k|=|\ell|$ and we are done.

\end{proof}

\subsection{Cyclic Subgroup Distortion in $\mathcal{F}(G)$}
Fix some $G\in \mathcal{A}_r$ and recall we have some presentation $\mathcal{F}(G)=\langle \Lambda_0, t_1,\ldots, t_M\mid t_ix_it_i^{-1}=y_i\rangle$.
\begin{lemma}\label{amalgUndist}
    For all $g\in G$, $\langle g\rangle_{G}$ is undistorted.
\end{lemma}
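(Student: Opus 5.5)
We argue by induction on $r$, with $G\in\mathcal{A}_r$. When $r=0$, $G$ is a finitely generated free group and Fact \ref{UndistortedCyclicsInFree} applies directly. For the inductive step, write $G=A\ast_C B$ using the fixed splitting, where $A,B\in\mathcal{A}_{r-1}$ and $C=\langle x\rangle_A=\langle y\rangle_B$. By the induction hypothesis, $C$ is undistorted in both $A$ and $B$. Fact \ref{AmalgamFactorDist} then shows that $A$ and $B$ are undistorted in $G$. Take $g\in G$, which we may assume non-trivial. Conjugation by a fixed element is a quasi-isometry of $G$, so distortion of $\langle g\rangle$ is invariant under conjugation. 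Corollary \ref{normalFormFacts}\eqref{cycReducedLengths} therefore lets us assume that either $||g||\leq 1$ or $||g||$ is even and at least $2$.

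\emph{Case $||g||\leq 1$.} Here $g$ lies in $A$ or in $B$; say $g\in A$. The induction hypothesis says $\langle g\rangle$ is undistorted in $A$, and we have shown $A$ is undistorted in $G$. Fact \ref{QIEmbedd} composes these to give that $\langle g\rangle$ is undistorted in $G$.

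\emph{Case $||g||=\ell\geq 2$ even.} Corollary \ref{normalFormFacts}\eqref{powerLengths} gives $||g^k||=|k|\ell$. Corollary \ref{normalFormFacts}\eqref{compareLengths} gives $||g^k||\leq|g^k|$. Together these yield $|k|\leq |k|\ell\leq |g^k|_G$, which is exactly the linear lower bound needed for the map $k\mapsto g^k$ to be a quasi-isometric embedding. The reverse inequality $|g^k|\leq |k||g|$ is trivial.

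The main point to check is \eqref{compareLengths}: the inequality $||h||\leq|h|$ has to hold for word length taken with respect to the generating set $X_A\sqcup X_B$ of the default presentation of $G$. This works because each generator lies in $A$ or in $B$, so multiplying by one generator changes the normal-form length by at most one. The remaining care is bookkeeping. Distortion is preserved under conjugation, since $|hg^kh^{-1}|$ and $|g^k|$ differ by at most $2|h|$. We also need that ``undistorted'' in Fact \ref{AmalgamFactorDist} refers to the induction hypothesis applied to the specific cyclic subgroups $\langle x\rangle\leq A$ and $\langle y\rangle\leq B$. With these checks, the constants depend on $g$ but the growth is linear, as required.
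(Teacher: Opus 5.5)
Your proof is correct and follows the paper's argument essentially step for step. Both use induction on $r$, pass to a conjugate with $||g||\leq 1$ or $||g||$ even via Corollary \ref{normalFormFacts}\eqref{cycReducedLengths}, and compose quasi-isometric embeddings through Facts \ref{QIEmbedd} and \ref{AmalgamFactorDist} when $g\in A\cup B$. In the even case both use the chain $|k|\,||g||=||g^k||\leq|g^k|\leq|k||g|$. Your additional checks only make explicit what the paper leaves implicit: that distortion is invariant under conjugation, and why $||h||\leq|h|$ holds for the generating set $X_A\sqcup X_B$.
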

\begin{proof}
    We proceed by induction on $r$. If $r=0$, this is Fact \ref{UndistortedCyclicsInFree}. For $r\geq 1$, without loss of generality we may replace $g$ with $g'$ as given in Corollary 
    \ref{normalFormFacts}\eqref{cycReducedLengths}. If $||g||\leq 1$ then $g\in A\cup B$ and our claim follows from $\langle g\rangle$ and $C$ is (by induction) undistorted in $A$ and $B$, and from Facts \ref{QIEmbedd} and \ref{AmalgamFactorDist}. If $||g||>1$ is even, we have $|q|||g||=||g^q|| \leq |g^q|\leq |q||g|$, with the equality holding by Corollary \ref{normalFormFacts}\eqref{powerLengths}, the first inequality holding by Corollary \ref{normalFormFacts}\eqref{compareLengths}, and the second inequality holding by definition. Our Lemma follows. 
\end{proof}

\begin{cor}\label{iterUndist}
    For all $i=1,\ldots, M$, $\langle x_i\rangle_{\mathcal{F}(G)},\langle y_i\rangle_{\mathcal{F}(G)}$ are undistorted.
\end{cor}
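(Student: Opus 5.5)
We argue by induction on $r$, following the inductive construction of $\mathcal{F}(G)$ in Definition \ref{defIterGroupoid} and the presentation from Lemma \ref{iterFundGrpPresentation}. The key observation is that $\mathcal{F}(G)$ is itself an element of some $\mathcal{A}_{r'}$ up to the HNN structure. More usefully, each $y_i$ is conjugate into $im(f_G)$ and $f_G$ has a left inverse $f'_G$. A cyclic subgroup $\langle z\rangle$ is undistorted if and only if $|z^q|\geq \epsilon|q|$ for some $\epsilon>0$, and conjugation changes $|z^q|$ by at most $2|\gamma|$. So it suffices to show that $\langle f_G(g)\rangle_{\mathcal{F}(G)}$ is undistorted for every $g\in G$.

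For that, we use the retraction. Since $f'_G\circ f_G=id_G$ and $f'_G$ is a homomorphism between finitely generated groups, it is Lipschitz: $|f'_G(w)|_G\leq K|w|_{\mathcal{F}(G)}$ for some constant $K$. Hence
$$|q|\,\epsilon_g\leq |g^q|_G=|f'_G(f_G(g)^q)|_G\leq K|f_G(g)^q|_{\mathcal{F}(G)},$$
where the first inequality is Lemma \ref{amalgUndist}. The reverse inequality $|f_G(g)^q|\leq |q|\,|f_G(g)|$ is trivial. This handles every $y_i$, and also $y_{M'+1}=f_B(y)$ directly.

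For the $x_i$, we use the relation $t_ix_it_i^{-1}=y_i$: $x_i$ is conjugate to $y_i$ in $\mathcal{F}(G)$, so $\langle x_i\rangle$ is undistorted by the conjugation remark above.

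The main obstacle is that Lemma \ref{iterFundGrpPresentation} asserts the conjugacy of $y_i$ into $im(f_G)$ only for the final group $G$. We must check that this holds with $G$ being the top-level group, so that the Lipschitz retraction argument applies in $\mathcal{F}(G)$ rather than in some intermediate $\mathcal{F}(A)$. The statement does give exactly this, for all $i\leq M$. One subtlety remains: the preimage element in $G$ must be nontrivial, so that Lemma \ref{amalgUndist} gives a positive $\epsilon$. This holds because $y_i$ is not the identity and $f_G$ is injective.
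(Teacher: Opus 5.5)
Your proposal is correct and is essentially the paper's proof: conjugate into $im(f_G)$ via Lemma \ref{iterFundGrpPresentation}, pull back along the Lipschitz retraction $f'_G$, and apply Lemma \ref{amalgUndist} in $G$. The paper applies this to $x_i$ directly, while you treat $y_i$ first and pass to $x_i$ through $t_i$, which amounts to the same argument. Your opening framing is never used and should be dropped: you announce an induction on $r$ that you never carry out, and the claim that $\mathcal{F}(G)$ lies in some $\mathcal{A}_{r'}$ is not justified.
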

\begin{proof}
    Let $h\in im(f_G)$ be the element to which $x_i$ is conjugate by Lemma \ref{iterFundGrpPresentation}; it suffices to show $\langle h\rangle_{\mathcal{F}(G)}$ is undistorted. Let $w_k$ be the minimal length word for $h^k$; we know $|w_k|\leq |k||w_1|.$ For the lower bound, let $u_k$ be the minimal length word for $f_G'(h)\in G$, so by Lemma \ref{amalgUndist} $C|k|\leq |u_k|\leq |k||u_1|$ for some constant $C>0$. We see $f_G'(w_k)$, as a word, has length at least $|u_k|$, whence $|w_k|>C'|k|$ for some constant $C'>0$, and the first claim is shown. 

\end{proof}

\subsection{Iterated Fundamental Groupoids Redux}\label{IFGRedux} Finally, we finish detailing the structure of $\mathcal{F}(G)$.
\begin{prop}\label{iterFundConjPowers}
    For all $r>0$ and $G\in \mathcal{A}_r$, $\mathcal{F}(G)$ has a presentation of the form $\langle \Lambda_0, t_1,\ldots, t_M\mid t_ix_it_i^{-1}=y_i\rangle$ where \begin{enumerate} \item$\Lambda_0$ is a finite set and $x_i,y_i$ are words on $\Lambda_0\cup\{t_1,\ldots, t_{i-1}\}$ not representing the identity, \item $\langle x_i\rangle_{\mathcal{F}(G)},\langle y_i\rangle_{\mathcal{F}(G)}$ are both undistorted in $\mathcal{F}(G)$ \item  $\langle\Lambda_0\rangle_{\mathcal{F}(G)}$ is free with free basis $\Lambda_0$, and \item no distinct powers of the same $x_i$ or $y_i$ are conjugate to each other in $\mathcal{F}(G)$ unless their exponents have the same magnitude.\end{enumerate}
\end{prop}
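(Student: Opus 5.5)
Items (1) and (3) are exactly the content of Lemma \ref{iterFundGrpPresentation}: the presentation $\langle \Lambda_0, t_1,\ldots, t_M\mid t_ix_it_i^{-1}=y_i\rangle$ exists with $\Lambda_0\subseteq\Lambda$ finite, each $x_i,y_i$ a non-trivial word on $\Lambda_0\cup\{t_1,\ldots,t_{i-1}\}$, and $\langle\Lambda_0\rangle_{\mathcal{F}(G)}$ free on $\Lambda_0$. Item (2) is Corollary \ref{iterUndist}. So the only new content is item (4), and I will fix the presentation from Lemma \ref{iterFundGrpPresentation}.

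For (4), I reduce to Lemma \ref{distinctPowerConj} via the retraction $f'_G$. By Lemma \ref{iterFundGrpPresentation}, each $y_i$ is conjugate in $\mathcal{F}(G)$ to some $h\in im(f_G)$. Since $x_i$ is conjugate to $y_i$ by $t_i$, the same $h$ serves for $x_i$. Powers of $x_i$ or $y_i$ are therefore conjugate exactly when the corresponding powers of $h$ are, so it suffices to treat $h=f_G(g)$ with $g\in G$. Because $f_G$ is injective and $x_i\neq 1$, we have $g\neq1$.

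Now suppose $h^k$ and $h^\ell$ are conjugate in $\mathcal{F}(G)$, say $\gamma h^k\gamma^{-1}=h^\ell$. Applying the homomorphism $f'_G$ and using $f'_G\circ f_G=\mathrm{id}_G$ gives $f'_G(\gamma)g^kf'_G(\gamma)^{-1}=g^\ell$ in $G$. Since $G\in\mathcal{A}_r$ and $g\neq 1$, Lemma \ref{distinctPowerConj} forces $|k|=|\ell|$. This proves (4).

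The only step needing care is confirming that the conjugating element $h$ in Lemma \ref{iterFundGrpPresentation} lies in $im(f_G)$ for the same $G$, rather than for the factor groups $A$ or $B$ at intermediate stages of the induction. The last paragraph of the proof of Lemma \ref{iterFundGrpPresentation} establishes precisely this by pushing forward along $f_1\circ f_0$, so I expect no real obstacle; the proposition is essentially an assembly of the earlier results.
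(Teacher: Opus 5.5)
Your proof is correct. Items (1)--(3) are handled exactly as in the paper, but for item (4) you take a different route. You pull the conjugacy back along the retraction $f'_G$ to $G$ and invoke Lemma~\ref{distinctPowerConj}, which rests on the normal form and conjugacy theorem for amalgams together with an induction on $r$.

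The paper instead derives (4) directly from (2) by a growth argument. If $\gamma x_i^q\gamma^{-1}=x_i^p$ with $|p|>|q|>0$, iterating gives $\gamma^k x_i^{q^k}\gamma^{-k}=x_i^{p^k}$. Undistortedness of $\langle x_i\rangle$ then yields $|p|^k/C\le 2k|\gamma|+|q|^k|x_i|$, which fails for large $k$.

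The trade-off is generality versus dependencies. The paper's argument uses nothing about $\mathcal{F}(G)$ beyond item (2), so it applies verbatim to any group $H$ satisfying hypotheses (1)--(3) of Theorem~\ref{SecondMainTheorem}. This is exactly how property (4) is obtained for general $H$ at the start of that theorem's proof, where it is needed. Your argument depends on $\mathcal{F}(G)$ retracting onto a group in $\mathcal{A}_r$ with each $y_i$ conjugate into $im(f_G)$. So it proves the proposition, but it would not supply (4) for Theorem~\ref{SecondMainTheorem} in general. In exchange, your route is independent of item (2), and hence of the distortion input (Lemma~\ref{amalgUndist} and Fact~\ref{AmalgamFactorDist}). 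It is also the argument that actually puts Lemma~\ref{distinctPowerConj} to use.
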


\begin{proof}
     All statements in this Proposition save the last are proved in Lemma \ref{iterFundGrpPresentation} and Corollary \ref{iterUndist}. For this item, suppose $\gamma x_i^q\gamma^{-1}=x_i^p$ for some $i$, with $|p|>|q|>0$. Then $\gamma^k x_i^{q^k}\gamma^{-k}=x_i^{p^k}$, and by the second item there exists a constant $C>0$ such that $$|p|^k/C=\abs{p^k}/C\leq \abs{x^{p^k}}_{\mathcal{F}(G)}\leq \abs{\gamma^k x_i^{q^k}\gamma^{-k}}_{\mathcal{F}(G)}\leq 2k|\gamma|_{\mathcal{F}(G)}+|q|^k|x_i|_{\mathcal{F}(G)}.$$ For large $k, |p|^k$ must be greater than $2k|\gamma|_{\mathcal{F}(G)}+|q|^k|x_i|_{\mathcal{F}(G)}$, thus giving a contradiction and implying $|p|=|q|$.

     % the first of which also gives (for all $i\in\{1,\ldots, \ell\}$) some $g_i\in im(f_G)$ to which $y_i$ (and hence $x_i$) is conjugate. If $y_i^p$ is conjugate to $y_i^q$ in $\mathcal{F}(G)$ for $p\neq q$, then $f'_G(g_i^p)=f'_G(g_i)^p$ is conjugate to $f'_G(g_i^q)=f_G'(g_i)^q$ in $G$, so $|p|=|q|$ by Lemma \ref{distinctPowerConj}. The same holds if distinct powers of $x_i$ are conjugate, so we are done.

\end{proof}

 As observed in Remark \ref{CLRetract}, to prove $\CL_G$ is linear as in Corollary \ref{AkCor}, it suffices to bound $\CL_{\mathcal{F}(G)}(n)$. Proposition \ref{iterFundConjPowers} shows that Theorem \ref{SecondMainTheorem} applies to $\mathcal{F}(G)$, and so to that Theorem we now turn.
\section{Proof of Theorem \ref{SecondMainTheorem}}

By the same proof as Lemma \ref{iterFundConjPowers}, a group $H$ satisfying the hypotheses of Theorem \ref{SecondMainTheorem} in fact has the property that no distinct powers of the same $x_i$ or $y_i$ are conjugate to each other in $H$ unless their exponents have the same magnitude. So, to summarize, we have $H=\langle \Lambda_0,t_1,\ldots, t_M\mid t_ix_it_i^{-1}=y_i\rangle$ with the following properties: \begin{enumerate} \item\label{xiyirelators}$\Lambda_0$ is a finite set and $x_i,y_i$ are words on $\Lambda_0\cup\{t_1,\ldots, t_{i-1}\}$ not representing the identity,  \item\label{lambdafree}  $\langle\Lambda_0\rangle_{H}$ is free with free basis $\Lambda_0$, \item\label{noDistortion} $\langle x_i\rangle_{H},\langle y_i\rangle_{H}$ are both undistorted in $H$, and \item \label{distinctpowers}no distinct powers of the same $x_i$ or $y_i$ are conjugate to each other in $H$ unless their exponents have the same magnitude.\end{enumerate}
Recall the existence and construction of cyclically Britton-reduced conjugates $x_i',y_i'$ of $x_i,y_i$ given in Lemma \ref{cannonicalConjugates} -- this construction relied only on the presentation of $\mathcal{F}(G)$, and thus goes through for $H$ without change. (If one of $x_i,y_i$ are conjugate to a cyclically reduced element of $\langle \Lambda_0\rangle,$ define $x_i'$ or $y_i'$ to be that element). Also, for ease of notation let $H_j=\langle \Lambda_0,t_1,\ldots,t_j\rangle_H$ and note (as in Corollary \ref{iterBrittonForm}) that $H_j$ is an HNN extension of $H_{j-1}$ with stable letter $t_j$. Throughout this section, we assume without loss of generality that $x_i$ and $y_i$ are represented by minimal length words in the generators of $H_{i-1}$.

 \begin{defn}
     Consider any $t_i$-corridor in a van Kampen or annular diagram over $H$. An augmented corridor of length $\ell$ is constructed as follows. Let $\alpha_i$, $\beta_i$ be minimal length (in $H_{i-1}$) conjugators taking $x_i$ to $x_i'$ and $y_i$ to $y_i'$ respectively. Then, in the van Kampen diagrams $D_1,D_2$ for $\alpha_i x_i\alpha_i^{-1}(x_i'^{-1})$ and $\beta_iy_i\beta_i^{-1}(y_i')^{-1}$, the individual boundary components labeled by $\alpha_i,x_i'$ and $\beta_i,y_i'$ respectively are each simple paths (the $x_i',y_i'$ components being simple follows from $x_i',y_i'$ being cyclically or cyclically Britton-reduced, as the case may be). Glue two copies of $D_1$ to each other along the boundary components labeled $x_i'$ to obtain a diagram $D_1'$, constructing $D_2'$ from $D_2$ analogously. Then, to a single cell $C$ labeled $t_i x_it_i^{-1}y_i^{-1}$, glue $D_1'$ and $D_2'$ along the boundary components labeled $x_i$ and $y_i$ respectively, obtaining diagram $R_i$. We call the union of $C$ with the copies of $D_1$ and $D_2$ glued to them a ``rung." Finally, glue $\ell$ copies of $R_i$ together along the boundary components labeled $\beta_i^{-1}\beta_it_i\alpha_i^{-1}\alpha_i$. The resulting diagram is called a corridor-lozenge, and the union of all rungs is the augmented corridor. 

 \end{defn} 
\begin{figure}
    \centering

\tikzset{every picture/.style={line width=0.75pt}} %set default line width to 0.75pt        

\begin{tikzpicture}[x=0.8pt,y=0.8pt,yscale=-1,xscale=1]
%uncomment if require: \path (0,366); %set diagram left start at 0, and has height of 366

%Shape: Arc [id:dp37919943202059936] 
\draw  [draw opacity=0] (440.14,225.67) .. controls (415.29,245.96) and (361.88,260) .. (300,260) .. controls (237.77,260) and (184.1,245.8) .. (159.43,225.32) -- (300,200) -- cycle ; \draw   (440.14,225.67) .. controls (415.29,245.96) and (361.88,260) .. (300,260) .. controls (237.77,260) and (184.1,245.8) .. (159.43,225.32) ;  
%Straight Lines [id:da5452983899586896] 
\draw    (303.65,260.1) -- (298.9,259.92) ;
\draw [shift={(296.9,259.85)}, rotate = 2.12] [color={rgb, 255:red, 0; green, 0; blue, 0 }  ][line width=0.75]    (10.93,-3.29) .. controls (6.95,-1.4) and (3.31,-0.3) .. (0,0) .. controls (3.31,0.3) and (6.95,1.4) .. (10.93,3.29)   ;
%Straight Lines [id:da9095463466011044] 
\draw    (253,257.35) -- (249.23,256.86) ;
\draw [shift={(247.25,256.6)}, rotate = 7.43] [color={rgb, 255:red, 0; green, 0; blue, 0 }  ][line width=0.75]    (10.93,-3.29) .. controls (6.95,-1.4) and (3.31,-0.3) .. (0,0) .. controls (3.31,0.3) and (6.95,1.4) .. (10.93,3.29)   ;
%Straight Lines [id:da3738580921394574] 
\draw    (181.25,238.35) -- (176.06,235.94) ;
\draw [shift={(174.25,235.1)}, rotate = 24.9] [color={rgb, 255:red, 0; green, 0; blue, 0 }  ][line width=0.75]    (10.93,-3.29) .. controls (6.95,-1.4) and (3.31,-0.3) .. (0,0) .. controls (3.31,0.3) and (6.95,1.4) .. (10.93,3.29)   ;
%Straight Lines [id:da6691846007043956] 
\draw    (350.5,256.35) -- (346.46,257.19) ;
\draw [shift={(344.5,257.6)}, rotate = 348.23] [color={rgb, 255:red, 0; green, 0; blue, 0 }  ][line width=0.75]    (10.93,-3.29) .. controls (6.95,-1.4) and (3.31,-0.3) .. (0,0) .. controls (3.31,0.3) and (6.95,1.4) .. (10.93,3.29)   ;
%Straight Lines [id:da5410268012675442] 
\draw    (408.75,242.6) -- (405.61,243.86) ;
\draw [shift={(403.75,244.6)}, rotate = 338.2] [color={rgb, 255:red, 0; green, 0; blue, 0 }  ][line width=0.75]    (10.93,-3.29) .. controls (6.95,-1.4) and (3.31,-0.3) .. (0,0) .. controls (3.31,0.3) and (6.95,1.4) .. (10.93,3.29)   ;
%Straight Lines [id:da2207052380273924] 
\draw    (375,254.75) -- (375,215) ;
%Straight Lines [id:da7944406427541871] 
\draw    (325,260) -- (325,215) ;
%Straight Lines [id:da15212423543758813] 
\draw    (275,260) -- (275,215) ;
%Straight Lines [id:da4502971285062687] 
\draw    (225,254.75) -- (225,215) ;
%Straight Lines [id:da8340470256206224] 
\draw    (375,245) -- (375,232) ;
\draw [shift={(375,230)}, rotate = 90] [color={rgb, 255:red, 0; green, 0; blue, 0 }  ][line width=0.75]    (10.93,-3.29) .. controls (6.95,-1.4) and (3.31,-0.3) .. (0,0) .. controls (3.31,0.3) and (6.95,1.4) .. (10.93,3.29)   ;
%Straight Lines [id:da04972939682615951] 
\draw    (325,260) -- (325,232) ;
\draw [shift={(325,230)}, rotate = 90] [color={rgb, 255:red, 0; green, 0; blue, 0 }  ][line width=0.75]    (10.93,-3.29) .. controls (6.95,-1.4) and (3.31,-0.3) .. (0,0) .. controls (3.31,0.3) and (6.95,1.4) .. (10.93,3.29)   ;
%Straight Lines [id:da6626048938748701] 
\draw    (275,250) -- (275,232) ;
\draw [shift={(275,230)}, rotate = 90] [color={rgb, 255:red, 0; green, 0; blue, 0 }  ][line width=0.75]    (10.93,-3.29) .. controls (6.95,-1.4) and (3.31,-0.3) .. (0,0) .. controls (3.31,0.3) and (6.95,1.4) .. (10.93,3.29)   ;
%Straight Lines [id:da08553761476761035] 
\draw    (225,250) -- (225,232) ;
\draw [shift={(225,230)}, rotate = 90] [color={rgb, 255:red, 0; green, 0; blue, 0 }  ][line width=0.75]    (10.93,-3.29) .. controls (6.95,-1.4) and (3.31,-0.3) .. (0,0) .. controls (3.31,0.3) and (6.95,1.4) .. (10.93,3.29)   ;
%Straight Lines [id:da6464966194267039] 
\draw    (145,214.5) -- (455,214.5) ;
%Straight Lines [id:da40827811313940965] 
\draw    (145,185) -- (145,214.7) ;
%Straight Lines [id:da14200473368532118] 
\draw    (145,190) -- (145,203) ;
\draw [shift={(145,205)}, rotate = 270] [color={rgb, 255:red, 0; green, 0; blue, 0 }  ][line width=0.75]    (10.93,-3.29) .. controls (6.95,-1.4) and (3.31,-0.3) .. (0,0) .. controls (3.31,0.3) and (6.95,1.4) .. (10.93,3.29)   ;
%Straight Lines [id:da1332077472597355] 
\draw    (225,185) -- (225,214.7) ;
%Straight Lines [id:da8979110274825698] 
\draw    (225,190) -- (225,203) ;
\draw [shift={(225,205)}, rotate = 270] [color={rgb, 255:red, 0; green, 0; blue, 0 }  ][line width=0.75]    (10.93,-3.29) .. controls (6.95,-1.4) and (3.31,-0.3) .. (0,0) .. controls (3.31,0.3) and (6.95,1.4) .. (10.93,3.29)   ;
%Straight Lines [id:da1945071530182796] 
\draw    (275,185.3) -- (275,215) ;
%Straight Lines [id:da5000025739161482] 
\draw    (275,190.5) -- (275,203.5) ;
\draw [shift={(275,205.5)}, rotate = 270] [color={rgb, 255:red, 0; green, 0; blue, 0 }  ][line width=0.75]    (10.93,-3.29) .. controls (6.95,-1.4) and (3.31,-0.3) .. (0,0) .. controls (3.31,0.3) and (6.95,1.4) .. (10.93,3.29)   ;
%Straight Lines [id:da2684896030116316] 
\draw    (325,185.3) -- (325,215) ;
%Straight Lines [id:da10382417777061059] 
\draw    (325,190.5) -- (325,203.5) ;
\draw [shift={(325,205.5)}, rotate = 270] [color={rgb, 255:red, 0; green, 0; blue, 0 }  ][line width=0.75]    (10.93,-3.29) .. controls (6.95,-1.4) and (3.31,-0.3) .. (0,0) .. controls (3.31,0.3) and (6.95,1.4) .. (10.93,3.29)   ;
%Straight Lines [id:da41296386516989936] 
\draw    (375,185.3) -- (375,215) ;
%Straight Lines [id:da46272701647788217] 
\draw    (375,190.5) -- (375,203.5) ;
\draw [shift={(375,205.5)}, rotate = 270] [color={rgb, 255:red, 0; green, 0; blue, 0 }  ][line width=0.75]    (10.93,-3.29) .. controls (6.95,-1.4) and (3.31,-0.3) .. (0,0) .. controls (3.31,0.3) and (6.95,1.4) .. (10.93,3.29)   ;
%Straight Lines [id:da3182044483534575] 
\draw    (455,185.3) -- (455,215) ;
%Straight Lines [id:da5092746629224171] 
\draw    (455,190.5) -- (455,203.5) ;
\draw [shift={(455,205.5)}, rotate = 270] [color={rgb, 255:red, 0; green, 0; blue, 0 }  ][line width=0.75]    (10.93,-3.29) .. controls (6.95,-1.4) and (3.31,-0.3) .. (0,0) .. controls (3.31,0.3) and (6.95,1.4) .. (10.93,3.29)   ;
%Straight Lines [id:da45356912624326295] 
\draw    (145,214.5) -- (159.43,225.32) ;
\draw [shift={(159.43,225.32)}, rotate = 216.85] [color={rgb, 255:red, 0; green, 0; blue, 0 }  ][line width=0.75]    (0,5.59) -- (0,-5.59)   ;
%Straight Lines [id:da33667262568253276] 
\draw    (440.14,225.67) -- (455,215) ;
\draw [shift={(440.14,225.67)}, rotate = 144.32] [color={rgb, 255:red, 0; green, 0; blue, 0 }  ][line width=0.75]    (0,5.59) -- (0,-5.59)   ;
%Straight Lines [id:da3844469987208321] 
\draw    (159.43,225.32) -- (146.6,215.7) ;
\draw [shift={(145,214.5)}, rotate = 36.85] [color={rgb, 255:red, 0; green, 0; blue, 0 }  ][line width=0.75]    (10.93,-3.29) .. controls (6.95,-1.4) and (3.31,-0.3) .. (0,0) .. controls (3.31,0.3) and (6.95,1.4) .. (10.93,3.29)   ;
%Straight Lines [id:da6990525758509858] 
\draw    (440.14,225.67) -- (453.38,216.17) ;
\draw [shift={(455,215)}, rotate = 144.32] [color={rgb, 255:red, 0; green, 0; blue, 0 }  ][line width=0.75]    (10.93,-3.29) .. controls (6.95,-1.4) and (3.31,-0.3) .. (0,0) .. controls (3.31,0.3) and (6.95,1.4) .. (10.93,3.29)   ;

%Shape: Arc [id:dp9147879925574375] 
\draw  [draw opacity=0] (440.14,173.83) .. controls (415.29,153.54) and (361.88,139.5) .. (300,139.5) .. controls (237.77,139.5) and (184.1,153.7) .. (159.43,174.18) -- (300,199.5) -- cycle ; \draw   (440.14,173.83) .. controls (415.29,153.54) and (361.88,139.5) .. (300,139.5) .. controls (237.77,139.5) and (184.1,153.7) .. (159.43,174.18) ;  
%Straight Lines [id:da9215676361537847] 
\draw    (303.65,139.4) -- (298.9,139.58) ;
\draw [shift={(296.9,139.65)}, rotate = 357.88] [color={rgb, 255:red, 0; green, 0; blue, 0 }  ][line width=0.75]    (10.93,-3.29) .. controls (6.95,-1.4) and (3.31,-0.3) .. (0,0) .. controls (3.31,0.3) and (6.95,1.4) .. (10.93,3.29)   ;
%Straight Lines [id:da39720110779455575] 
\draw    (253,142.15) -- (249.23,142.64) ;
\draw [shift={(247.25,142.9)}, rotate = 352.57] [color={rgb, 255:red, 0; green, 0; blue, 0 }  ][line width=0.75]    (10.93,-3.29) .. controls (6.95,-1.4) and (3.31,-0.3) .. (0,0) .. controls (3.31,0.3) and (6.95,1.4) .. (10.93,3.29)   ;
%Straight Lines [id:da4916207697388614] 
\draw    (181.25,161.15) -- (176.06,163.56) ;
\draw [shift={(174.25,164.4)}, rotate = 335.1] [color={rgb, 255:red, 0; green, 0; blue, 0 }  ][line width=0.75]    (10.93,-3.29) .. controls (6.95,-1.4) and (3.31,-0.3) .. (0,0) .. controls (3.31,0.3) and (6.95,1.4) .. (10.93,3.29)   ;
%Straight Lines [id:da5254988136790071] 
\draw    (350.5,143.15) -- (346.46,142.31) ;
\draw [shift={(344.5,141.9)}, rotate = 11.77] [color={rgb, 255:red, 0; green, 0; blue, 0 }  ][line width=0.75]    (10.93,-3.29) .. controls (6.95,-1.4) and (3.31,-0.3) .. (0,0) .. controls (3.31,0.3) and (6.95,1.4) .. (10.93,3.29)   ;
%Straight Lines [id:da6676936581176159] 
\draw    (408.75,156.9) -- (405.61,155.64) ;
\draw [shift={(403.75,154.9)}, rotate = 21.8] [color={rgb, 255:red, 0; green, 0; blue, 0 }  ][line width=0.75]    (10.93,-3.29) .. controls (6.95,-1.4) and (3.31,-0.3) .. (0,0) .. controls (3.31,0.3) and (6.95,1.4) .. (10.93,3.29)   ;
%Straight Lines [id:da9740128317825817] 
\draw    (375,144.75) -- (375,184.5) ;
%Straight Lines [id:da45458266420471904] 
\draw    (325,139.5) -- (325,184.5) ;
%Straight Lines [id:da15793040838478034] 
\draw    (275,139.5) -- (275,184.5) ;
%Straight Lines [id:da03216258991417764] 
\draw    (225,144.75) -- (225,184.5) ;
%Straight Lines [id:da7664166889769813] 
\draw    (375,154.5) -- (375,167.5) ;
\draw [shift={(375,169.5)}, rotate = 270] [color={rgb, 255:red, 0; green, 0; blue, 0 }  ][line width=0.75]    (10.93,-3.29) .. controls (6.95,-1.4) and (3.31,-0.3) .. (0,0) .. controls (3.31,0.3) and (6.95,1.4) .. (10.93,3.29)   ;
%Straight Lines [id:da9200037643803934] 
\draw    (325,139.5) -- (325,167.5) ;
\draw [shift={(325,169.5)}, rotate = 270] [color={rgb, 255:red, 0; green, 0; blue, 0 }  ][line width=0.75]    (10.93,-3.29) .. controls (6.95,-1.4) and (3.31,-0.3) .. (0,0) .. controls (3.31,0.3) and (6.95,1.4) .. (10.93,3.29)   ;
%Straight Lines [id:da9145054622724733] 
\draw    (275,149.5) -- (275,167.5) ;
\draw [shift={(275,169.5)}, rotate = 270] [color={rgb, 255:red, 0; green, 0; blue, 0 }  ][line width=0.75]    (10.93,-3.29) .. controls (6.95,-1.4) and (3.31,-0.3) .. (0,0) .. controls (3.31,0.3) and (6.95,1.4) .. (10.93,3.29)   ;
%Straight Lines [id:da7183664786556592] 
\draw    (225,149.5) -- (225,167.5) ;
\draw [shift={(225,169.5)}, rotate = 270] [color={rgb, 255:red, 0; green, 0; blue, 0 }  ][line width=0.75]    (10.93,-3.29) .. controls (6.95,-1.4) and (3.31,-0.3) .. (0,0) .. controls (3.31,0.3) and (6.95,1.4) .. (10.93,3.29)   ;
%Straight Lines [id:da5280834167965159] 
\draw    (145,185) -- (159.43,174.18) ;
\draw [shift={(159.43,174.18)}, rotate = 143.15] [color={rgb, 255:red, 0; green, 0; blue, 0 }  ][line width=0.75]    (0,5.59) -- (0,-5.59)   ;
%Straight Lines [id:da425316171560604] 
\draw    (440.14,173.83) -- (455,184.5) ;
\draw [shift={(440.14,173.83)}, rotate = 215.68] [color={rgb, 255:red, 0; green, 0; blue, 0 }  ][line width=0.75]    (0,5.59) -- (0,-5.59)   ;
%Straight Lines [id:da9845420622947827] 
\draw    (159.43,174.18) -- (146.6,183.8) ;
\draw [shift={(145,185)}, rotate = 323.15] [color={rgb, 255:red, 0; green, 0; blue, 0 }  ][line width=0.75]    (10.93,-3.29) .. controls (6.95,-1.4) and (3.31,-0.3) .. (0,0) .. controls (3.31,0.3) and (6.95,1.4) .. (10.93,3.29)   ;
%Straight Lines [id:da19111692798772417] 
\draw    (440.14,173.83) -- (453.38,183.33) ;
\draw [shift={(455,184.5)}, rotate = 215.68] [color={rgb, 255:red, 0; green, 0; blue, 0 }  ][line width=0.75]    (10.93,-3.29) .. controls (6.95,-1.4) and (3.31,-0.3) .. (0,0) .. controls (3.31,0.3) and (6.95,1.4) .. (10.93,3.29)   ;
%Straight Lines [id:da5957165770298839] 
\draw    (145,184.35) -- (455,184.35) ;
%Shape: Arc [id:dp43816637341940456] 
\draw  [draw opacity=0] (145,185) .. controls (145,185) and (145,185) .. (145,185) .. controls (145,129.77) and (214.4,85) .. (300,85) .. controls (385.6,85) and (455,129.77) .. (455,185) -- (300,185) -- cycle ; \draw   (145,185) .. controls (145,185) and (145,185) .. (145,185) .. controls (145,129.77) and (214.4,85) .. (300,85) .. controls (385.6,85) and (455,129.77) .. (455,185) ;  
%Shape: Arc [id:dp42690181024561313] 
\draw  [draw opacity=0] (455,215) .. controls (455,215) and (455,215) .. (455,215) .. controls (455,270.23) and (385.6,315) .. (300,315) .. controls (214.4,315) and (145,270.23) .. (145,215) -- (300,215) -- cycle ; \draw   (455,215) .. controls (455,215) and (455,215) .. (455,215) .. controls (455,270.23) and (385.6,315) .. (300,315) .. controls (214.4,315) and (145,270.23) .. (145,215) ;  

% Text Node
\draw (152,192.4) node [anchor=north west][inner sep=0.75pt]  [font=\scriptsize]  {$t_{i}$};
% Text Node
\draw (232,192.4) node [anchor=north west][inner sep=0.75pt]  [font=\scriptsize]  {$t_{i}$};
% Text Node
\draw (282,192.9) node [anchor=north west][inner sep=0.75pt]  [font=\scriptsize]  {$t_{i}$};
% Text Node
\draw (332,192.9) node [anchor=north west][inner sep=0.75pt]  [font=\scriptsize]  {$t_{i}$};
% Text Node
\draw (382,192.9) node [anchor=north west][inner sep=0.75pt]  [font=\scriptsize]  {$t_{i}$};
% Text Node
\draw (462,192.9) node [anchor=north west][inner sep=0.75pt]  [font=\scriptsize]  {$t_{i}$};
% Text Node
\draw (296,217.4) node [anchor=north west][inner sep=0.75pt]  [font=\scriptsize]  {$x_{i}$};
% Text Node
\draw (346,217.4) node [anchor=north west][inner sep=0.75pt]  [font=\scriptsize]  {$x_{i}$};
% Text Node
\draw (401,217.4) node [anchor=north west][inner sep=0.75pt]  [font=\scriptsize]  {$x_{i}$};
% Text Node
\draw (191,217.4) node [anchor=north west][inner sep=0.75pt]  [font=\scriptsize]  {$x_{i}$};
% Text Node
\draw (246,217.4) node [anchor=north west][inner sep=0.75pt]  [font=\scriptsize]  {$x_{i}$};
% Text Node
\draw (298.9,263.25) node [anchor=north west][inner sep=0.75pt]  [font=\scriptsize]  {$x_{i} '$};
% Text Node
\draw (346.5,261) node [anchor=north west][inner sep=0.75pt]  [font=\scriptsize]  {$x_{i} '$};
% Text Node
\draw (405.75,248) node [anchor=north west][inner sep=0.75pt]  [font=\scriptsize]  {$x_{i} '$};
% Text Node
\draw (192,247.4) node [anchor=north west][inner sep=0.75pt]  [font=\scriptsize]  {$x_{i} '$};
% Text Node
\draw (245,262.4) node [anchor=north west][inner sep=0.75pt]  [font=\scriptsize]  {$x_{i} '$};
% Text Node
\draw (211,232.9) node [anchor=north west][inner sep=0.75pt]  [font=\scriptsize]  {$\alpha $};
% Text Node
\draw (261,232.9) node [anchor=north west][inner sep=0.75pt]  [font=\scriptsize]  {$\alpha $};
% Text Node
\draw (311,232.9) node [anchor=north west][inner sep=0.75pt]  [font=\scriptsize]  {$\alpha $};
% Text Node
\draw (362,232.9) node [anchor=north west][inner sep=0.75pt]  [font=\scriptsize]  {$\alpha $};
% Text Node
\draw (149,222.4) node [anchor=north west][inner sep=0.75pt]  [font=\scriptsize]  {$\alpha $};
% Text Node
\draw (442,222.4) node [anchor=north west][inner sep=0.75pt]  [font=\scriptsize]  {$\alpha $};
% Text Node
\draw (298.9,126.05) node [anchor=north west][inner sep=0.75pt]  [font=\scriptsize]  {$y_{i} '$};
% Text Node
\draw (346.5,128.3) node [anchor=north west][inner sep=0.75pt]  [font=\scriptsize]  {$y_{i} '$};
% Text Node
\draw (405.75,141.3) node [anchor=north west][inner sep=0.75pt]  [font=\scriptsize]  {$y_{i} '$};
% Text Node
\draw (191,141.05) node [anchor=north west][inner sep=0.75pt]  [font=\scriptsize]  {$y_{i} '$};
% Text Node
\draw (245,126.9) node [anchor=north west][inner sep=0.75pt]  [font=\scriptsize]  {$y_{i} '$};
% Text Node
\draw (146,167.05) node [anchor=north west][inner sep=0.75pt]  [font=\scriptsize]  {$\beta $};
% Text Node
\draw (442,167.05) node [anchor=north west][inner sep=0.75pt]  [font=\scriptsize]  {$\beta $};
% Text Node
\draw (246,171.9) node [anchor=north west][inner sep=0.75pt]  [font=\scriptsize]  {$y_{i}$};
% Text Node
\draw (191,171.9) node [anchor=north west][inner sep=0.75pt]  [font=\scriptsize]  {$y_{i}$};
% Text Node
\draw (401,171.9) node [anchor=north west][inner sep=0.75pt]  [font=\scriptsize]  {$y_{i}$};
% Text Node
\draw (346,171.9) node [anchor=north west][inner sep=0.75pt]  [font=\scriptsize]  {$y_{i}$};
% Text Node
\draw (296,171.9) node [anchor=north west][inner sep=0.75pt]  [font=\scriptsize]  {$y_{i}$};
% Text Node
\draw (211,157.4) node [anchor=north west][inner sep=0.75pt]  [font=\scriptsize]  {$\beta $};
% Text Node
\draw (261,157.4) node [anchor=north west][inner sep=0.75pt]  [font=\scriptsize]  {$\beta $};
% Text Node
\draw (311,157.4) node [anchor=north west][inner sep=0.75pt]  [font=\scriptsize]  {$\beta $};
% Text Node
\draw (362,157.4) node [anchor=north west][inner sep=0.75pt]  [font=\scriptsize]  {$\beta $};
% Text Node
\draw (296,63.05) node [anchor=north west][inner sep=0.75pt]  [font=\scriptsize]  {$y_{i}^{5}$};
% Text Node
\draw (296,317.4) node [anchor=north west][inner sep=0.75pt]  [font=\scriptsize]  {$x_{i}^{5}$};

\end{tikzpicture}
    \caption{An augmented $t_i$-corridor of length 5, with the rest of the lozenge shown in less detail}
    \label{fig:augmentedCor}
\end{figure}
\begin{lemma}\label{augmentedDiagram}
    For every annular or van Kampen diagram $\Omega$, there exists a diagram $\Omega^+$ whose boundary component(s) is (are) labeled by the same word(s) as $\Omega$'s, and such that every $t_i$-corridor $T$ which either begins and ends on $\del\Omega^+$ or forms a loop is contained in an augmented corridor $T^+$. This diagram is not necessarily reduced.
\end{lemma}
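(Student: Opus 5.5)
The plan is to build $\Omega^+$ from $\Omega$ by a local surgery along each relevant corridor, leaving the boundary words unchanged. Fix a $t_i$-corridor $T$ of length $\ell$ in $\Omega$ that either begins and ends on $\del\Omega$ or forms a loop. Its two sides are labeled by words freely equal to $x_i^{\ell}$ and $y_i^{\ell}$; up to free reduction we may take them to be literally $x_i^\ell$ and $y_i^\ell$, as the paper notes for its contexts. The corridor-lozenge $L$ of length $\ell$ has the following features:
\begin{itemize}
\item its outer boundary reads $x_i^\ell$ on one side and $y_i^\ell$ on the other;
\item its ends are the $t_i$-edges flanked by the $\alpha_i^{-1}\alpha_i$ and $\beta_i^{-1}\beta_i$ spurs;
\item each rung is a cell $t_ix_it_i^{-1}y_i^{-1}$ with the doubled diagrams $D_1'$ and $D_2'$ attached along $x_i$ and $y_i$.
\end{itemize}
The key observation is that the doubled diagram $D_1'$ has boundary $x_i$ read against $\alpha_i^{-1}x_i'\alpha_i$ and then back, so it is a (non-reduced) diagram whose outer boundary is just $x_i$ composed with a cancelling pair. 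Hence $R_i$, and then $L$, has the same outer boundary label (after free cancellation of the spurs $\alpha_i^{-1}\alpha_i$, $\beta_i^{-1}\beta_i$) as the original corridor $T$.

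The replacement step: cut $\Omega$ along the two sides of $T$, remove $T$, and glue in $L$. I would implement this by inserting, along each side of $T$, a thin ``zipper'' annulus or disc realizing the free cancellation of $\alpha_i^{-1}\alpha_i$ and $\beta_i^{-1}\beta_i$ (degenerate 0-cells, i.e.\ spurs, which are allowed since $\Omega^+$ need not be reduced). In the case where $T$ ends on the boundary, the ends of $L$ are single $t_i$-edges with dangling spurs folded back onto themselves, so the boundary word of $\Omega^+$ is unchanged. In the loop case, the first and last rungs are glued together along $\beta_i^{-1}\beta_it_i\alpha_i^{-1}\alpha_i$ exactly as consecutive rungs are, closing $L$ into an annulus whose two boundary circles read $x_i^\ell$ and $y_i^\ell$, matching the sides of the removed annular corridor. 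Planarity (respectively annularity) is preserved because we only replace a disc or annulus by another with identical boundary label.

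Next, iterate over all such corridors. Distinct corridors in $\Omega$ that are of the specified type do not share $t_i$-cells, but a $t_j$-corridor for $j<i$ may pass through the side region of $T$, since sides of $T$ carry $t_j$-letters. I would perform the surgeries in decreasing order of $i$. Inserting the $D_1'$, $D_2'$ pieces only adds material inside the augmented corridor without altering any other corridor's cells, so previously augmented corridors stay contained in their augmentations. Newly created $t_j$-corridors inside the $D$-pieces end on sides of $t_i$-cells, not on $\del\Omega^+$, and are not loops unless already so. Thus they need no augmentation, and the process terminates after finitely many steps.

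The main obstacle I expect is the bookkeeping that the side labels of $T$ are genuinely the words $x_i^\ell$, $y_i^\ell$ rather than merely equal to them. Where they differ only by free reduction, I would insert a van Kampen diagram over the free group on the generators (only spurs and folds) between $T$'s side and the lozenge. A second concern is ensuring that inserting spurs keeps $\Omega^+$ a legitimate (singular) diagram. I would handle this by treating the augmented corridor as a map onto the original corridor region that collapses each doubled $D_k'$ onto its $x_i$- or $y_i$-edge path.
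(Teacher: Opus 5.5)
Your construction of the replacement piece is the paper's. You take the corridor-lozenge of length $\ell$. You fold its end segments $\alpha_i^{-1}\alpha_i$ and $\beta_i^{-1}\beta_i$ together when $T$ runs from $\partial\Omega$ to $\partial\Omega$ (your ``zipper''), and glue its two ends to each other when $T$ is a loop. You then put the result in place of $T$. The paper glues on van Kampen diagrams $\Delta_x,\Delta_y$ to match the side paths of $T$; you instead substitute directly for the open strip of $T$, whose abstract boundary word is literally $t_ix_i^{\ell}t_i^{-1}y_i^{-\ell}$. That is fine.

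The problem is your iteration paragraph. Its key assertion is that $t_j$-corridors ($j<i$) created inside the inserted pieces end on $t_i$-cells and are not new loops; this is false.
\begin{itemize}
\item The outer $x_i$-side of the lozenge carries exactly the $t_j$-edges at which corridors of $\Omega$ used to terminate on $T$. Behind that side lies the second copy of $D_1$. Inside it, such an edge may be joined by a $t_j$-corridor to another edge of that side, or to an $\alpha_i$-side and thence, across the glued interface $\beta_i^{-1}\beta_it_i\alpha_i^{-1}\alpha_i$, into the next copy of $R_i$. So two corridors of $\Omega$, each running from $\partial\Omega$ to $T$, can fuse into one corridor of $\Omega^+$ running from boundary to boundary.
\item If some $t_j$-corridor of $D_1$ joins its two $\alpha_i$-sides, its copies in successive copies of $R_i$ link up across the interfaces. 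They close into a new $t_j$-loop, directly in the loop case and through the folded ends in the boundary case.
\end{itemize}
Such corridors lie in no augmented corridor. Augmenting them later, as your decreasing-index scheme would require, replaces cells inside rungs you have already built. So ``previously augmented corridors stay contained in their augmentations'' also fails.

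The paper makes no such claim. Its proof only substitutes the lozenge for a given corridor $T$ of $\Omega$, and the lemma is only ever invoked for specified corridors (e.g.\ the radial corridors $T_k$ in the proof of Theorem \ref{SecondMainTheorem}). Distinct corridors of $\Omega$ have disjoint sets of cells, so you can substitute lozenges for all the relevant corridors of $\Omega$ simultaneously. Delete the claims about newly created corridors, and read the conclusion as applying to the corridors of the original diagram.
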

\begin{proof}
    Let $S$ be the corridor-lozenge of the same length $\ell_T$ as $T$. If $T$ begins and ends on the boundary of $\Omega$, fold the boundary components of $S$ which are labeled $\alpha_i^{-1}\alpha_i$ and $\beta_i^{-1}\beta_i$, found along the first and last copies of $R_i$, together; otherwise, leave these edges unfolded. Let $p_x$ be the path along the side of $T$ labeled by a power of $x_i$ such that all self-loops are removed, and let $\Delta_x$ be a van Kampen diagram between $x_i^{\ell_T}$ and the label of $p_x$; define $p_y$ and $\Delta_y$ analogously. The boundary component of $S$ labeled $x_i^{\ell_T}$ is simple by construction, so we may glue $\Delta_x$ that side without violating planarity, at least after possible inserting a van Kampen diagram to make $x_i^{\ell_T}$ precisely equal to the label of $p_x$; doing the same with $\Delta_y$ gives a diagram $S'$ whose boundary label(s) is (are) precisely the same of $T$'s. Inserting this $S'$ in place of $T$ in $\Omega$ completes the proof.
\end{proof}
\begin{defn}\label{sides}Let $T^+$ be an augmented $t_i$-corridor $T^+$ beginning and ending on $\del\Omega^+$. We define the \textit{sides} of $T^+$ to be the boundary components of $T^+$ that are not adjacent to the first or last rung; the sides of an augmented $t_i$-corridor forming a loop are the two components of its boundary. In either case, if $\ell_T>2$ they are entirely labeled by $(x_i')^{\ell_T-2}$ and $(y_i')^{\ell_T-2}$.
\end{defn}
\begin{lemma}\label{nonContractibleCorridorsExcision}
    Let $\Omega$ be an annular diagram over $H_j$ such that the boundary words $u,v$ are cyclically Britton-reduced with respect to $t_j$, and suppose $\Omega$ contains at most two  non-contractible $t_j$-corridors $T,S$ with the same number of cells. Then the number of cells in $T\cup S$ is bounded above by $C\lambda$, where $C>0$ is some constant depending on $H_j$ and $\lambda$ is the minimum of the lengths of $u$ and $v$.
\end{lemma}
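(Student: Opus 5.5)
Let $T$ be a non-contractible $t_j$-corridor in $\Omega$ with $\ell$ cells. The plan is to use the undistortedness of $\langle x_j\rangle_H$ and $\langle y_j\rangle_H$ (property \eqref{noDistortion}), together with the fact that $u$ and $v$ admit no $t_j$-pinches, to bound $\ell$ linearly in $\lambda$. Since $T$ is non-contractible, it forms a loop around the hole of $\Omega$. One side of $T$ is labeled by a word equal in $H_{j-1}$ to $x_j^{\pm\ell}$, the other by a word equal to $y_j^{\pm\ell}$, and each side is a closed curve freely homotopic to the boundary components.

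First I would cut $\Omega$ along the sides of $T$ (and of $S$, if present). This produces sub-annular diagrams, which would ideally lie over $H_{j-1}$, since every $t_j$-corridor other than $T$ and $S$ is contractible. So the first real step is to use the cyclic Britton-reducedness of $u$ and $v$ with respect to $t_j$ to rule out contractible $t_j$-corridors with both ends on the same boundary component. Such a corridor would cut off a subdiagram exhibiting a $t_j$-pinch in a cyclic permutation of $u$ or $v$. Corridors with one end on $u$ and the other on $v$ are non-contractible in the sense relevant here, and so are among $T$ and $S$ by hypothesis.

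It follows that the annulus between $u$ and the nearer side of $T$ is an annular diagram over $H_{j-1}$, and it shows that $u$ (or $u$ with a piece removed, if $S$ meets $u$) is conjugate in $H_{j-1}$ to $x_j^{\pm\ell}$ or $y_j^{\pm\ell}$. When no $t_j$-letters appear in $u$, this gives $u\sim x_j^{\pm\ell}$ in $H_{j-1}$.

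The key quantitative step is to pass from ``$u$ is conjugate to $x_j^{\ell}$'' to ``$\ell\le C|u|$''. Conjugacy does not preserve length, so I would replace $x_j$ by its cyclically Britton-reduced conjugate $x_j'$ from Lemma \ref{cannonicalConjugates}. I would then argue, inductively down the tower $H_{j-1}\supset H_{j-2}\supset\cdots$ or via the augmented corridors of Lemma \ref{augmentedDiagram}, that a cyclically Britton-reduced word conjugate to $(x_j')^{\ell}$ has length at least $c\ell$. The cases are as follows.

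\begin{itemize}
\item If $x_j'$ contains a $t_k$-letter, then $(x_j')^\ell$ is cyclically Britton-reduced and has at least $\ell$ $t_k$-letters. By the conjugacy theorem for HNN extensions (Collins' lemma), any cyclically reduced conjugate has the same number of $t_k$-letters. Hence $|u|\ge \ell$.
\item If $x_j'$ lies in the free group $\langle\Lambda_0\rangle$, then $u$ is a cyclic permutation of $(x_j')^{\pm\ell}$, up to conjugacy by powers of lower stable letters. The same Collins-type argument combined with freeness gives $|u|\ge\ell$.
\end{itemize}

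Property \eqref{distinctpowers} ensures that the exponent $\ell$ is determined by the conjugacy class, so the two sides of $T$ cannot carry different powers.

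When $S$ is also present, $T$ and $S$ have the same length $\ell$, and the region between them is again an annulus over $H_{j-1}$. I would apply the argument above to the outer annuli to bound $\ell$, so that $|T\cup S|=2\ell\le 2C\lambda$. Applying the argument to both $u$ and $v$ gives the minimum $\lambda$.

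I expect the main obstacle to be the Collins-type length comparison in the case where $x_j'$ is elliptic at some levels of the tower. There, cyclic reduction at level $k$ may shorten words through $t_k$-pinches, and controlling this requires the undistortedness of all $\langle x_k\rangle$ and $\langle y_k\rangle$ to bound the loss linearly.
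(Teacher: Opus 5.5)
\textbf{There is a genuine gap in your key quantitative step.} Collins' lemma for $H_k$, viewed as an HNN extension of $H_{k-1}$ with stable letter $t_k$, says that cyclically reduced words conjugate \emph{in $H_k$} have the same number of $t_k$-letters. You only know that $u$ and $(x_j')^{\pm\ell}$ are conjugate in $H_{j-1}$. For $k<m<j$, such a conjugation can pass through $\langle x_m\rangle$ and $\langle y_m\rangle$, and there the $t_k$-count changes.

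Here is an example. Take $\Lambda_0=\{a,b,c\}$, $x_1=a$, $y_1=b$, $x_2=(t_1c)^2$, $y_2=c$. Then $c^q$ and $(t_1c)^{2q}$ are both cyclically Britton-reduced and conjugate in $H_2$, but they contain $0$ and $2q$ letters $t_1$ respectively. Now take $x_3=x_3'=t_1c$ and $u=c^q$. In an annular diagram over $H_3$ from $u$ to $y_3^{2q}$, a $t_2$-annulus with $q$ cells separates $u$ from the $t_3$-annulus $T$, and $T$ has $\ell=2q$ cells. Here $\#_1u=0$ and $|u|=q<\ell$. So both the invariance claim and the inequality $|u|\ge\ell$ in your first bullet fail.

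Your second bullet has the same defect. A conjugate of $(x_j')^{\pm\ell}$ with $x_j'$ free can be $x_m^{q}$ with $x_m$ containing $t$-letters, and ``Collins plus freeness'' gives no length comparison. The situation you defer as the ``main obstacle'' is conjugacy in $H_{j-1}$ routed through lower edge groups, geometrically lower-index annuli between $u$ and $T$. That is precisely the content of the lemma, and your proposal leaves it unproved. Invoking undistortedness to ``bound the loss'' in cyclic reduction does not, by itself, say how short a conjugate of $x_m^q$ can be.

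Two smaller points. Property \eqref{distinctpowers} is not what gives the two sides of $T$ the same exponent; the relator does that. Property \eqref{distinctpowers} is what limits the number of intermediate annuli. Also, no $t_j$-corridor can run from $u$ to $v$ once $T$ exists, since it would have to cross $T$.

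\textbf{How the paper closes the gap.} It proceeds by induction on $j$. Augmenting $T$ gives a loop $\gamma$ labelled $(x_j')^e$; let $j_1$ be the largest index of a $t$-edge between $u$ and $\gamma$.
\begin{enumerate}
\item If some $t_{j_1}$-corridor runs from $u$ to $\gamma$, then by cyclic Britton-reducedness every copy of $x_j'$ on $\gamma$ carries one, and your counting argument is valid.
\item Otherwise there are non-contractible $t_{j_1}$-corridors. By \eqref{distinctpowers} and excision there are at most two, and by the inductive hypothesis each has length $e'\le C_1\lambda$. Augmenting them gives a loop labelled $(x_{j_1}')^{e'}$, and the same argument is applied between that loop and $\gamma$.
\end{enumerate}
This descends in index until either a count applies or the remaining region lies over $\langle\Lambda_0\rangle$.

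\textbf{A shorter alternative using your own plan.} Your stated plan of using \eqref{noDistortion} can be made to work directly, bypassing Collins entirely, via stable length $\tau(g)=\lim_n|g^n|_H/n$. It is a conjugacy invariant, satisfies $\tau(g^k)=|k|\tau(g)$ and $\tau(g)\le|g|$, and undistortedness gives $\tau(x_j),\tau(y_j)>0$. In a reduced diagram the side of a corridor with $e$ cells is an $e$-th power, so the annulus between $u$ and $T$ shows that $u$ is conjugate to $x_j^{\pm e}$ or $y_j^{\pm e}$. Hence $e\le|u|/\min\{\tau(x_j),\tau(y_j)\}$. The same bound holds for $v$, using whichever of $T,S$ is adjacent to $v$; both have $e$ cells.
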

\begin{proof}
For the entirety of this proof, suppose that $u$ has length $\lambda$, that $T$ is closer to $u$ than $S$, and that the side of $T$ labeled with a power of $x_j$ is facing the boundary component labeled $u$. After replacing $T,S$ with $T^+, S^+$, as constructed in Lemma \ref{augmentedDiagram}, we may take loops $\gamma,\rho$ along the sides of $T^+,S^+$ labeled by $(x_j')^e$, where $e$ is the number of cells in $T$.
    We proceed by induction on $j$. For $j=1$, $(x_j')^e$ and $u$ are cyclically reduced and conjugate words over $\Lambda_0$, hence $e$ equals the length of $u$ divided by the length of $x_j'$. For the inductive step, let $\Omega_1$ be the subdiagram contained between $u$ and $T$, so if $\Omega_1$ has no $t$-edges at all, then $u$ is a cyclic permutation of $(x_j')^e$ and we proceed as in the base case. Otherwise, let $j_1<j$ be the largest index such that there is a $t_{j_1}$-edge in the subdiagram. If there is a $t_{j_1}$ corridor running from $u$ to $\gamma$, then every subword labeled $x_j'$ along $\gamma$ has such a $t_{j_1}$-corridor, since $x_j'$ is cyclically Britton-reduced with respect to $j$; the desired bound follows immediately.

    If there exists no $t_{j_1}$-corridor running from $u$ to $\gamma$, there instead exists at most two non-contractible $t_{j_1}$-corridors $T_1, S_1$ between $u$, and $\gamma$. We claim there are at most two such corridors, each with the same number of cells. Indeed, then the powers of $x_j'$ and $y_j'$ along the sides of any two such corridors must have the same magnitude, by property \eqref{distinctpowers} of $H$, and by annular excision we may remove a subdiagram containing at least one $t_j$-corridor if there are more two (all the corridors must have the same number of cells by \eqref{distinctpowers}, and at least one pair must have the same sign along their sides).
    
    Now, by induction, $T_1, S_1$ have at most $e'\leq C_1\lambda$ cells. Replace them with $T_1^+, S_1^+$. Supposing without loss of generality that that the side of $T_1^+$ labeled by some power $x_{j_1}^{e'}$ is facing $\gamma$, by annular insertion we create a path $\gamma'$ labeled $(x_{j_1}')^{e'}$ looping around $\gamma$. Let $j_2<j_1$ be the largest index such that there is a $t_{j_2}$ edge in the subdiagram $\Omega_2$ between $\gamma$ and $\gamma'$ (like before, if $\Omega_2$ has no $t$-edges at all, then $(x_{j_1}')^{e'}$ is a cyclic permutation of $(x_j')^e$ and we proceed again as in the base case). If there is a $t_{j_2}$ corridor running from $\gamma$ to $\gamma'$, our bound on $e$ follows as in the previous paragraph. Otherwise, we may continue considering $t$-corridors with lower and lower indices, augmenting corridors at each step, until we obtain a loop $\gamma''$, labeled by (say) $(x_{j_k}')^{e''}$ where $e''<C_k\lambda_1$, such that the diagram $\Omega_k$ contained between $\gamma$ and $\gamma''$ is either a diagram over $\langle\Lambda_0\rangle_{H_i}$ or has some $t_{j_k}$-corridor running between $\gamma$ and $\gamma''$. In both cases, the desired bound follows as above, so we are done.
\end{proof}

\begin{proof}[Proof of Theorem \ref{SecondMainTheorem}]
We proceed by induction on the number $M$ of $t$-letters in the presentation of $H$. Let $u,v$ be conjugate words over the generators of $H$ of total length at most $n$. By repeated application of property \eqref{noDistortion} of $H$, at the cost of a constant-factor increase in length and conjugating by a linear-length conjugator, we may assume both $u$ and $v$ are cyclically Britton-reduced with respect to some $t_i$. Without loss of generality, assume $i$ is minimal, so one (and hence both) of $u$ and $v$ contains a $t_i$-letter if $i>0$. Let $\Omega$ be a reduced annular diagram for $u$ and $v$, and $j$ the maximal index of any $t_j$-edge appearing in $\Omega$. Note that, for $j'\leq j$, there is no $t_{j'}$-corridor going to and from the same boundary component of $\Omega$. We have two cases.

For the first case, suppose there exists some non-contractible $t_j$-corridor $T$ for $j>i$. As in Lemma \ref{nonContractibleCorridorsExcision}, by annular excision we may assume there are at most two $t_j$-corridors, and so apply the same lemma to see that they have at most $C\lambda$ many cells, where $\lambda$ is the minimal length of $u$ or $v$. The diagrams between $T$ and $u$, $T$ and $S$, and $S$ and $v$ are all diagrams over $H_{j-1}$, and the claim then follows by induction. 

For the second case, which will comprise the rest of this proof, suppose there exists a $t_j$-corridor $T$ running from one boundary component of $\Omega$ to the other, so $j=i$. Enumerate all $t$-corridors (for any $t_{j'},j'\leq j$) running between the two boundary components as $T_1,T_2,\ldots, T_m$, and assume $T=T_{m+1}=T_1$. If one of these $t$-corridors has less than $3$ cells, we are done. Otherwise, replace $\Omega$ by $\Omega^+$ given by Lemma \ref{augmentedDiagram} and delete the first and last rungs of each $T_k^+$ from $\Omega^+$, resulting in an increase in the length of each boundary component by a constant factor. The only (original) parts remaining of the boundary of $T_k^+$ are its sides, which are not adjacent to the boundary of $\Omega^+$.  

 Let $\ell_k$ be the number of remaining cells in $T_k^+$, and let $\Delta_k$ be the subdiagram contained strictly between $T_k^+$ and $T_{k+1}^+$, so $\Delta_k$ is a van Kampen diagram for $\alpha_k^{\ell_k} u_k\beta_k^{-\ell_{k+1}}v_k^{-1}$, where $\alpha_k,\beta_k\in\{x_1',y_1',\ldots, x_M',y_M'\}$ depend on the orientation and index of $t$-edges in $T_k,T_{k+1}$ and $u_k,v_k$ are subwords of of $u,v$ respectively. Call the segments of $\del\Delta$ labeled $u_k,v_k$ the upper and lower boundary segments, and those labeled $\a_k^{\ell_k},\b_k^{-\ell_{k+1}}$ the left and right contours, respectively. The words upper and lower reflect how we will refer to direction when ``navigating" in $\Delta_k$; when enumerating things such as corridors, higher ones (i.e. ones closer to $u_k$) will be numbered first. Our aim is to bound $\min_k\ell_k$. Let $r_k$ be the largest index such that $t_{r_k}$ appears in either $\alpha_k$ or $\beta_k$, or zero if these words contain no $t$-letters. Then $\Delta_k$ is a van Kampen diagram over $H_{r_k}$, and (if $r_k>0$) every $t_{r_k}$-edge in $\Delta_k$ is part of a $t_{r_k}$-corridor. 
 
 When $r_k>0$, there are no $t_{r_k}$-corridors going to and from $u_k$ or to and from $v_k$ because they are cyclically Britton-reduced. Every $t_{r_k}$-corridor thus has at least one endpoint in $\alpha_k^{\ell_k}$ or $\beta_k^{-\ell_{k+1}}$. If no $t_{r_k}$-corridor runs between the left and right contours, we see $\ell_k+\ell_{k+1}$ is bounded above by $|u_k|+|v_k|\leq |u|+|v|$, and we are done. Otherwise, we call all such $t_{r_k}$-corridors ``bars," and all other $t_{r_k}$-corridors ``boundary corridors." Let $L_k=\lcm(\#_{{r_k}}\alpha_k,\#_{r_k}\beta_k)$ and $\a_k'$ be the cyclic permutation of $\a_k$ starting from the first $t_{r_k}$-edge of the left contour which is \textit{not} is part of a boundary corridor, and define $\b_k'$ analogously. For any segment $S$ of the left contour labeled $(\alpha_k')^{L_k}$ such that every $t_{r_k}$-corridors with an endpoint in $S$ is a bar, there exists a segment $S'$ of the right contour labeled ${\b_k'}^{L_k}$ such that every $t_{r_k}$-corridor with an endpoint in $S\cup S'$ has both endpoints in that set. Supposing instead that $r_k=0$,  we recall that $\alpha_k,\beta_k$ are cyclically freely reduced because they are cyclically Britton-reduced with respect to $t_i$, and define $L_k=\lcm(|\alpha_k|,|\beta_k|)$. Let $\alpha_k'$ be the cyclic permutation of $\alpha_k$ starting from the first letter (reading from $u$ to $v$ along the left contour) of $\alpha_k^{\ell_k}$ which is part of both contours; we define $\beta_k'$ analogously. 
 
 Define $L=\lcm_kL_k$. In the rest of this proof we assume $\min_k\ell_k>|u|+|v|+2L$. If $r_k>0$, the previous paragraph shows there is a bar starting at the first edge of each consecutive left contour segment which is labeled $(\alpha_k')^{L_k}$ and has no boundary corridors, and running to the first edge of each consecutive right contour segment which is labeled $(\b_k')^{L_k}$ and has no boundary corridors. In this case, call the first bar running from the first edge of an $(\alpha_k')^{L}$ segment to the first edge of a $(\b_k')^L$ segment a ``top bar." . Alternatively, if $r_k=0$, then $\Delta_k$ is a diagram over $\langle \Lambda_0\rangle_H$, so by property \eqref{lambdafree} and appealing to cancellation in free groups we see $(\alpha_k')^{L_k}=(\beta_k')^{-L_k}$, so $(\alpha_k')^{L}=(\beta_k')^{-L}$. Call an edge in the left contour of $\Delta_k$ a top edge if it is the first letter of both an $(\alpha_k')^{L}$ contour segment and a $(\beta_k')^{L}$ contour segment; consecutive top edges are top edges of consecutive instances of these words.

 We have two subcases:
 \begin{enumerate}\begin{figure}
         \centering

\tikzset{every picture/.style={line width=0.75pt}} %set default line width to 0.75pt        

\begin{tikzpicture}[x=0.75pt,y=0.75pt,yscale=-1,xscale=1]
%uncomment if require: \path (0,412); %set diagram left start at 0, and has height of 412

%Straight Lines [id:da9510800455724133] 
\draw    (150,50) -- (500,50) ;
%Straight Lines [id:da5446456968239016] 
\draw    (150,350) -- (500,350) ;
%Shape: Arc [id:dp9787791547327369] 
\draw  [draw opacity=0] (210,50) .. controls (210,50) and (210,50) .. (210,50) .. controls (210,77.61) and (183.14,100) .. (150,100) -- (150,50) -- cycle ; \draw   (210,50) .. controls (210,50) and (210,50) .. (210,50) .. controls (210,77.61) and (183.14,100) .. (150,100) ;  
%Shape: Arc [id:dp5069136456025221] 
\draw  [draw opacity=0] (240,50) .. controls (240,88.66) and (199.71,120) .. (150,120) -- (150,50) -- cycle ; \draw   (240,50) .. controls (240,88.66) and (199.71,120) .. (150,120) ;  
%Straight Lines [id:da816373916003121] 
\draw    (150,150) -- (500,100) ;
%Straight Lines [id:da0007246482462345272] 
\draw    (150,170) -- (500,120) ;
%Straight Lines [id:da3500461648196882] 
\draw    (150,300) -- (500,250) ;
%Straight Lines [id:da762510923113903] 
\draw    (150,320) -- (500,270) ;
%Shape: Arc [id:dp2936347730239611] 
\draw  [draw opacity=0] (439.67,349.86) .. controls (439.67,349.86) and (439.67,349.86) .. (439.67,349.86) .. controls (439.54,322.25) and (466.3,299.73) .. (499.44,299.58) -- (499.67,349.58) -- cycle ; \draw   (439.67,349.86) .. controls (439.67,349.86) and (439.67,349.86) .. (439.67,349.86) .. controls (439.54,322.25) and (466.3,299.73) .. (499.44,299.58) ;  
%Shape: Arc [id:dp7744187165167039] 
\draw  [draw opacity=0] (409.67,350) .. controls (409.67,350) and (409.67,350) .. (409.67,350) .. controls (409.49,311.34) and (449.64,279.81) .. (499.34,279.58) -- (499.67,349.58) -- cycle ; \draw   (409.67,350) .. controls (409.67,350) and (409.67,350) .. (409.67,350) .. controls (409.49,311.34) and (449.64,279.81) .. (499.34,279.58) ;  
%Straight Lines [id:da44494972630698937] 
\draw  [dash pattern={on 0.84pt off 2.51pt}]  (140,170) -- (140,250) ;
\draw [shift={(140,250)}, rotate = 270] [color={rgb, 255:red, 0; green, 0; blue, 0 }  ][line width=0.75]    (0,5.59) -- (0,-5.59)   ;
\draw [shift={(140,170)}, rotate = 270] [color={rgb, 255:red, 0; green, 0; blue, 0 }  ][line width=0.75]    (0,5.59) -- (0,-5.59)   ;
%Straight Lines [id:da8033341914780194] 
\draw  [dash pattern={on 0.84pt off 2.51pt}]  (510,120) -- (510,200) ;
\draw [shift={(510,200)}, rotate = 270] [color={rgb, 255:red, 0; green, 0; blue, 0 }  ][line width=0.75]    (0,5.59) -- (0,-5.59)   ;
\draw [shift={(510,120)}, rotate = 270] [color={rgb, 255:red, 0; green, 0; blue, 0 }  ][line width=0.75]    (0,5.59) -- (0,-5.59)   ;
%Shape: Rectangle [id:dp1872536094651871] 
\draw   (100,50) -- (150,50) -- (150,350) -- (100,350) -- cycle ;
%Straight Lines [id:da06030714456809372] 
\draw    (150,230) -- (500,180) ;
%Straight Lines [id:da9703218982406223] 
\draw    (150,250) -- (500,200) ;
%Shape: Rectangle [id:dp09651691289474762] 
\draw   (499.67,49.58) -- (549.67,49.58) -- (549.67,349.58) -- (499.67,349.58) -- cycle ;

% Text Node
\draw (114,32.4) node [anchor=north west][inner sep=0.75pt]  [font=\scriptsize]  {$t_{2}$};
% Text Node
\draw (114,330.4) node [anchor=north west][inner sep=0.75pt]  [font=\scriptsize]  {$t_{2}$};
% Text Node
\draw (521,32.4) node [anchor=north west][inner sep=0.75pt]  [font=\scriptsize]  {$t_{2}$};
% Text Node
\draw (521,330.4) node [anchor=north west][inner sep=0.75pt]  [font=\scriptsize]  {$t_{2}$};
% Text Node
\draw (151,102.4) node [anchor=north west][inner sep=0.75pt]  [font=\scriptsize]  {$t_{1}$};
% Text Node
\draw (152,153.4) node [anchor=north west][inner sep=0.75pt]  [font=\scriptsize]  {$t_{1}$};
% Text Node
\draw (151,307.05) node [anchor=north west][inner sep=0.75pt]  [font=\scriptsize]  {$t_{1}$};
% Text Node
\draw (487,252.4) node [anchor=north west][inner sep=0.75pt]  [font=\scriptsize]  {$t_{1}$};
% Text Node
\draw (487,102.4) node [anchor=north west][inner sep=0.75pt]  [font=\scriptsize]  {$t_{1}$};
% Text Node
\draw (486.34,281.98) node [anchor=north west][inner sep=0.75pt]  [font=\scriptsize]  {$t_{1}$};
% Text Node
\draw (105,202.4) node [anchor=north west][inner sep=0.75pt]  [font=\scriptsize]  {$( \alpha _{k} ')^{L}$};
% Text Node
\draw (511,152.4) node [anchor=north west][inner sep=0.75pt]  [font=\scriptsize]  {$( \beta _{k} ')^{L}$};
% Text Node
\draw (301,152.4) node [anchor=north west][inner sep=0.75pt]  [font=\scriptsize]  {$x_{1}^{p}$};
% Text Node
\draw (301,302.4) node [anchor=north west][inner sep=0.75pt]  [font=\scriptsize]  {$x_{1}^{r}$};
% Text Node
\draw (152,233.4) node [anchor=north west][inner sep=0.75pt]  [font=\scriptsize]  {$t_{1}$};
% Text Node
\draw (487,182.4) node [anchor=north west][inner sep=0.75pt]  [font=\scriptsize]  {$t_{1}$};
% Text Node
\draw (301,232.4) node [anchor=north west][inner sep=0.75pt]  [font=\scriptsize]  {$x_{1}^{p}$};

\end{tikzpicture}

         \caption{Three consecutive top corridors}
         \label{fig:topCors}
     \end{figure}
     \item Suppose, for some $k$ with $r_k>0$, that there are consecutive top bars with $p,q,$ and $r$ cells respectively, such that $p\neq q$. Note that bars are not themselves augmented corridors. The situation is as depicted in Figure \ref{fig:topCors} ($r_k$ is taken to be 2 in this instance for simplicity). Letting $\gamma_1=(\alpha_k')^{L},\gamma_2=(\b_k')^{L}$ for ease of notation, we have $\gamma_1^{-1} x_{r_k}^p\gamma_2 = x_{r_k}^q$ and $\gamma_1^{-1} x_{r_k}^q\gamma_2 = x_{r_k}^r$. This second equation is equivalent to $x_{r_k}^r=\gamma_1^{-1} x_{r_k}^q\gamma_2=\gamma_1^{-1} x_{r_k}^{q-p}\gamma_1\gamma_1^{-1}x_{r_k}^p\gamma_2=\gamma_1^{-1} x_{r_k}^{q-p}\gamma_1x_{r_k}^q$. Since conjugate powers of $x_{r_k}$ must have exponents with the same magnitude by property \eqref{distinctpowers} of $H$, this implies $|r-q|=|q-p|$. 

    Starting from the top corridor closest to $u_k$, enumerate the top corridors $S_1$ through $S_N$, and let $\lambda_1,\rho_1$ be the paths from the start and endpoints of $u_k$ to the start and endpoints of $S_1$; analogously, let $\lambda_2,\rho_2$ be the paths from the start and endpoints of $S_N$ to the start and endpoints of $v_k$. Every $t_{r_k}$-edge in $\lambda_1,\rho_1,\lambda_2,\rho_2$ is part of a boundary corridor, so their total length is at a constant multiple of the maximal length of $\alpha_k,\beta_k$. In particular, the number of cells in both $S_1$ and $S_N$ are at most some constant times $\sigma$, since $\langle x_{r_k}\rangle$ is undistorted in $H_{j}$. The previous paragraph shows that the number of cells either increases in an arithmetic progression, or switches between two values on consecutive bars. If the the first case holds for any $k$, we obtain a linear bound for $N$. The number of cells in $T_k$ is linear in $LN$, and so we are done. If the second case always holds, a minor modification of the next case suffices.   
\item Suppose, for all $k$ with $r_k>0$, that all top bars have the same number of cells. We create a loop $\lambda_1$ as follows. Start at the beginning of the first $\alpha_1$ instance in the left contour of $\Delta_1$ containing top edge or bar $B_1$, then move to the beginning of $B_1$ and follow it to $T_2^+$. Move along the side of $T_2^+$ down to the beginning of the next instance of $\alpha_1$ (\textit{not} $\alpha_1'$ or $(\alpha_1')^L$). Cross $T_2^+$ along the top of this instance's rung, and move along the side of $T_2^+$ to the beginning of the first $\alpha_2$ instance in the left contour of $\Delta_2$ containing top edge or bar $B_2$, and continue this path along each $\Delta_k$ until a loop is formed.

Replacing all instances of ``first" in this construction with ``second" will give another loop $\gamma_2$: the paths along the bars are identical by assumption, and the paths up and down the sides of augmented $t$-corridors are translates of those in $\gamma_1$ by exactly $L$-cells, and thus \textit{also} have the same label. Therefore, the region between $\gamma_1$ and $\gamma_2$ may be excised, and we have shortened all augmented $t$-corridors. This implies there is a minimal length conjugator taking $u$ to $v$ of length at most $C(|u|+|v|)$, and we are done. Our proof is thus complete.
     
 \end{enumerate}\end{proof}

\appendix

\section{Conjugator Length of Generalized Baumslag-Solitar Groups}

In this appendix, $\Gamma$ is the base graph of a graph of groups $G$ (defined with respect to a fixed spanning tree $T\subseteq \Gamma$) where every vertex- and edge-group is $\ZZ$. The elements $a_e,b_e\in G$ will generate the images of the inclusion maps $H_e\to G_{\alpha(e)},H_e\to G_{\omega(e)}$, as in Subsection \ref{FundGrps}. For ease of notation, the generator of a vertex group $G_v$ ($v\in V(\Gamma)$) shall simply be identified with $v$.

\begin{lemma}\label{AppLemma}
    Either $\langle v\rangle_G$ is undistorted in $G$ for all $v\in V(\Gamma)$, or for all $v\in V(\Gamma)$, there exists a $g_v\in G$ such that $\langle v,g_v\rangle_G\simeq \BS(n_1,n_2)$ where $|n_1|\neq |n_2|$ depend on $v$, and hence $\langle v\rangle_G$ is exponentially distorted.
\end{lemma}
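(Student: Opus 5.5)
The plan is to use the modular homomorphism of the GBS group $G$. Since every vertex group is $\ZZ$ and the graph is connected, all vertex generators are commensurable. For each edge $e$, write $a_e=\alpha(e)^{p_e}$ and $b_e=\omega(e)^{q_e}$. Define $\Delta:G\to\mathbb{Q}^\times$ as follows. For $g\in G$, pick a vertex $v$ and $m,m'\neq 0$ with $g v^m g^{-1}=v^{m'}$; such a pair exists because $g\langle v\rangle g^{-1}$ is commensurable with $\langle v\rangle$ (this holds for generators, hence for all of $G$). Set $\Delta(g)=m'/m$. This is a well-defined homomorphism, independent of $v$ because any two vertex generators have common powers. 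Moreover, $\Delta(t_e)=\pm\, q_e p_e^{-1}\cdot(\text{tree-path correction})$.

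We split into two cases according to whether $|\Delta|$ is trivial.

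\textbf{Case 1: $|\Delta(g)|\neq 1$ for some $g$.} Take an arbitrary vertex $v$ and set $g_v=g$. Then $g v^m g^{-1}=v^{m'}$ with $|m|\neq|m'|$, so $\langle v,g\rangle$ is a quotient of $\BS(m,m')$. To see that it is actually isomorphic to $\BS(m,m')$, I would use the action on the Bass–Serre tree. The element $g$ is hyperbolic there: an elliptic $g$ would lie in a conjugate of a vertex group $\ZZ$, and then $|\Delta(g)|=1$. The subgroup $\langle v,g\rangle$ then acts on the tree with $v$ elliptic and $g$ hyperbolic. A ping-pong/normal form argument (Britton's Lemma on the induced splitting) should show that it is the HNN extension of $\langle v\rangle$ with stable letter $g$, that is, $\BS(m,m')$. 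If this identification is delicate, I expect it to be the main obstacle. A fallback is that the statement only really needs the consequence: exponential distortion follows directly from $g^k v^{m^k} g^{-k}=v^{m'^k}$, since this gives $|v^{m'^k}|\le 2k|g|+|m|^k$ while the exponent is $|m'|^k$ (swap roles if $|m|>|m'|$). This is the same computation as in Proposition \ref{iterFundConjPowers}.

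\textbf{Case 2: $|\Delta|\equiv 1$ (unimodular).} Then every $g$ satisfies $g v^m g^{-1}=v^{\pm m}$ for some $m\neq0$. I would show that $\langle v\rangle$ is undistorted. The standard route is that $\bigcap_{g}g\langle v^N\rangle g^{-1}$ contains a nontrivial cyclic normal subgroup $Z$ commensurable with $\langle v\rangle$. To obtain it, take $N$ to be the lcm of the edge indices raised to a suitable power, or argue via the finite orbit of $\langle v\rangle$'s commensurability class under the finitely many generators, using that in the unimodular case the indices $[\langle v\rangle:\langle v\rangle\cap g\langle v\rangle g^{-1}]$ are uniformly bounded. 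Then $G/Z$ is a graph of finite groups, hence virtually free, and $G$ is virtually $\ZZ\times F$ (after passing to the index-$\le2$ orientation kernel of the $\pm$ sign). In $\ZZ\times F$ the central factor is undistorted, because it is a retract in the product. Finite index and commensurability preserve undistortion by Fact \ref{QIEmbedd}, so $\langle v\rangle$ is undistorted for every $v$.

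Since $|\Delta|$ is either trivial or not, and this does not depend on $v$, the dichotomy is global over $V(\Gamma)$, which gives the statement. The main work lies in Case 1's identification of $\langle v,g_v\rangle$ with $\BS$ and in producing the normal cyclic subgroup in Case 2.
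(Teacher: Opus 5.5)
Your Case 2 is fine, and more detailed than the paper, which simply asserts that a unimodular GBS group is virtually $F_n\times\ZZ$. Case 1, however, has a genuine gap, both in the main line and in the fallback.

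\textbf{The BS identification fails.} The claim $\langle v,g\rangle\cong\BS(m,m')$ is false for an arbitrary $g$ with $|\Delta(g)|\neq 1$, even with $m$ minimal. Take $G=\BS(2,3)=\langle a,t\mid ta^2t^{-1}=a^3\rangle$, $v=a$, $c=tat^{-1}$ and $g=ctc^{-1}=tata^{-1}t^{-1}$, so $\Delta(g)=3/2$. Britton's Lemma gives $ga^mg^{-1}\in\langle a\rangle$ if and only if $6\mid m$, with $ga^6g^{-1}=a^9$. However, $c^2=a^3$ and $ga^3g^{-1}=c\,(ta^3t^{-1})\,c^{-1}=c^3$. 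Hence $c\in\langle a,g\rangle$, so $t=c^{-1}gc\in\langle a,g\rangle$ and $\langle a,g\rangle=G$. Since $\BS(2,3)^{\mathrm{ab}}\cong\ZZ$ while $\BS(6,9)^{\mathrm{ab}}\cong\ZZ\times\ZZ/3$, this subgroup is not $\BS(6,9)$. Geometrically, the vertex $x_0$ fixed by $a$ lies off the axis of $g$, so $[x_0,gx_0]$ and $[x_0,g^{-1}x_0]$ share their first edge. The map from the $\BS(6,9)$-tree therefore folds, and no ping-pong is available. The paper avoids this by citing Levitt's Lemma 2.4(1), which supplies one \emph{specific} pair $(v_0,g_{v_0})$ generating a BS subgroup. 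It then passes to the other vertices via commensurability of vertex groups.

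\textbf{The fallback gives only polynomial distortion.} The identity $g^kv^{m^k}g^{-k}=v^{m'^k}$ only yields $\Dist(n)\succeq n^{\log|m'|/\log|m|}$ when $|m|\geq 2$, as with $(m,m')=(6,9)$ above. That is polynomial, not exponential; the computation in Proposition~\ref{iterFundConjPowers} only needed superlinearity. The missing idea is compression by Euclidean division. Assume $|m|<|m'|$, replacing $g$ by $g^{-1}$ if necessary. Write $N=m'q+r$ with $0\le r<|m'|$, so that $v^N=g\,v^{mq}\,g^{-1}v^r$ with $|mq|\le(|m|/|m'|)|N|+|m|$. Iterating gives $|v^N|_G=O(\log|N|)$ for \emph{every} $N$, which is the form used later in the appendix. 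This works at every vertex for any $g$ with $|\Delta(g)|\neq 1$, with no BS identification needed.

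\textbf{The upper bound is missing.} ``Exponentially distorted'' also requires the matching upper bound, which you never address. The paper obtains it by removing the $t_e$-pinches for $e\notin E(T)$, at exponential cost, and landing in the tree product, where cyclic subgroups are undistorted by Lemma~\ref{amalgUndist}.
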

\begin{proof}
    That each $\langle v\rangle_G$ is at most exponentially distorted holds because removing all $t_e$-pinches for $e\not\in E(T)$ increases the length of a word exponentially and gives an element of the restriction of $G$ to $T$, wherein (by Lemma \ref{amalgUndist}) no cyclic subgroups are distorted. If $G$ is unimodular, it is virtually $F_n\times \ZZ$, so all cyclic subgroups are undistorted. If it is not unimodular, by \cite[Lemma 2.4(1)]{Levitt2007GBS} there exists some $v_0$ and $g_{v_0}$ such that $\langle v_0,g_{v_0}\rangle_G\simeq \BS(n_1,n_2)$. Every other vertex in $V(\Gamma)$ has a power that is equal to a power of $v_0$, so we are done.
\end{proof}

We say an element of $G$ is elliptic if it is conjugate to an element of some $G_v$, hyperbolic otherwise; note that the elements of one conjugacy class must either be all elliptic or all hyperbolic. Also, any word representing an elliptic ellement can be conjugated to an element of $G_v$ by a linear-length conjugator by the normal form for fundamental group(oid)s of graphs. To compute $
\CL_G(n)$, we pass to the fundamental groupoid of $\Gamma$, which by an abuse of notation we also write as $\mathcal{F}(G)$; the terms elliptic and hyperbolic port to this group in the obvious way. In either case, it can be shown by standard Bass-Serre theoretic techniques that cyclic subgroups generated by hyperbolic elements are undistorted.

\begin{proof}[Proof of the Hyperbolic Case]
    Let $w_1,w_2$ be words over the generators of $G$ of total length $n$ representing conjugate elements, and let $\Omega$ be a reduced annular diagram for $\iota(w_1),\iota(w_2)$ over $H$. Cyclically permute both so that all $t$-arches from $\Omega$ form $t$-pinches in $w_1,w_2$, and excise such $t$-pinches. If there are no radial $t$-corridors in $\Omega$, then this excision gives words over $V(\Gamma)$ which are freely conjugate (there are no other cells in $\Omega$). The conjugator, moreover, consists of a subword of $w_1,$ a power $v^
\ell$ of some $v
    \in V(\Gamma)$, and a subword of $w_2$. Moreover, if $G$ is unimodular than $\ell$ is linear in $|w_1|+|w_2|,$ otherwise it is exponential in the same; our claim follows by the previous lemma and the same logarithmic-length-shortenning technique as in the $\BS(1,m)$ argument of \cite{BRS}. If instead there is a radial corridor, the same argument as the second case in the proof of Theorem \ref{SecondMainTheorem}, in the case where $r_k=0$ for all $k$, gives that $w_1$ and $w_2$ are (up to cyclic permutation) conjugate by some $v^{\ell}$, where $\ell$ is again linear or exponential in $|w_1|+|w_2|$ if $G$ is (respectively) unimodular or not. The claim follows as above.
\end{proof}

\begin{rem}
    Along with our discussion of distortion, Wei\ss's  algorithm given in \cite[Proposition 28]{Weiss} also suffices to prove this case.   
\end{rem}
The argument for the elliptic case is substantially due to \cite{Weiss} and sources cited therein, particularly in its translation of conjugacy in $G$ to a congruence on $\NN^m\times \NN^{|V(\Gamma)|}$ and its appeal to techniques for semilinear sets -- for the reduction to a congruence on $\NN^m$ we are not aware of a direct source.
\begin{proof}[Proof of Elliptic Case] Let $\mathcal{P}=\{p_1,\ldots, p_m\}$ be the set of primes dividing one of the powers of a vertex equaling some $a_e$, including -1 as a prime. We can represent any element of the form $v^k$ ($v\in V(\Gamma)$) with the tuple $(r_k, e_1,e_2,\ldots,e_m, z_v)$ where $r_k$ is the largest divisor of $k$ not containing any element of $\mathcal{P}$, $e_i$ is the largest power of $p_i$ dividing $k$, and $z_v$ is the vector in $\NN^{|V(\Gamma)|}$ with a 1 in the spot corresponding to $v$ and 0's elsewhere. One can show (\cite{Weiss}) that $v^k$ and $u^
\ell$ are conjugate only if $r_k=r_\ell$; we accordingly suppress the first coordinate of our tuple and represent $v^k$ by $(e_1,e_2,\ldots,e_m, z_v)\in \NN^m\times \NN^{|V(\Gamma)|}$. Slightly varying from \cite{Weiss} in order to maintain consistency with our own notation, we abbreviate $(e_1,e_2,\ldots,e_m)$ as $E(k)$. For a vector $f=(f_1,f_2,\ldots, f_m)\in \NN^m$, define $N(f)=\prod_{i=1}^m p_i^{f_i}$ and note $N(E(k))=k/r_k$. Here are two crucial facts about $N$ we shall use -- the first is trivial, and the second follows immediately from the first.
\begin{itemize}
    \item For $f,g\in \NN^m$, $N(f+g)=N(f)N(g)$,
    \item For $f,g,h\in \NN^m$, if $\gamma$ conjugates $v^{N(f)}$ to $u^{N(g)}$, then it also conjugates $v^{N(f+h)}$ to $u^{N(g+h)}$
\end{itemize}

Wei$\ss$ defines the relation $\sim$ on $\NN^m\times \{z_v\mid v\in V(\Gamma)\}$ by declaring $(f,z_v)\sim(g,z_u)$ if and only if  $v^{N(f)}$ is conjugate to $u^{N(g)}$, and extends it to a congruence on all of $\NN^m\times \NN^{|V(\Gamma)|}$ in the trivial way. Here we study $\sim$ on $\NN^m\times \{z_v\mid v\in V(\Gamma)\}$ in greater detail. Let $e$ be an edge in $\Gamma$ between $v$ and $u$, and suppose the corresponding relation in $\mathcal{F}(G)$ is $t_ev^{A_e}t_e^{-1}=u^{B_e}$ for some integers $A_e,B_e$. Declare $(E(A_e)+g, z_v)\approx(E(B_e)+g,z_u)$ for all $g\in \NN^m$, and let $\approx^*$ be the transitive closure of $\approx$. By the Conjugacy Criterion \cite{horadam1981word}, $(f,z_v)\approx^*(g,z_u)$ if and only if $v^{N(f)}$ is conjugate to $u^{N(g)}$.

Wei$\ss$ \cite{Weiss} showed that $\sim$ is a semilinear relation on $\NN^m\times \NN^{|V(\Gamma)|}$, a fact which we now use to define semilinear relations on just $\NN^m$. For $v,u\in V(\Gamma)$, let $R_{v,u}=\{(f,g)\mid (f,z_v)\approx^*(g,z_u)\}\subseteq 'NN^m\times \NN^m$. This is found by intersecting the graph of $\approx^*$ (or, equivalently, $\sim$) with the semilinear set $(\NN^m\times\{z_v\})\times(\NN^m\times \{z_u\})$, then projecting to the $\NN^m$ factor in each coordinate, that is, forgetting the $z_v,z_u$ unit vectors. Semilinear sets are closed under projection and intersection by a classical result of Ginsburg-Spanier \cite{GinsburgSpanier1966}, hence $R_{v,u}$ is semilinear.

By the definition of semilinear, $R_{v,u}$ is thus the finite union $\bigcup_i S_i^{v,u}$ where $S_i^{v,u}$ is of the form $\{c_i^{v,u}+\sum_j t_jp_{i,j}^{v,u}\mid t_j\in \NN\}$ for $c_i^{v,u}\in R_{v,u}$ and non-zero $p_{i,j}^{v,u}\in \NN^m\times \NN^m$. To finish our conjugator length bound, we require only one more claim:
\begin{claim} If $(x,y)\in R_{v,u}$ then $(y,y)+p_{i,j}^{v,u}\in R_{u,u}$ for all $p_{i,j}^{v,u}$. 
\end{claim}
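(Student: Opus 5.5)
The plan is to read the claim as follows: if $p=(p',p'')\in\NN^m\times\NN^m$ is a period of one of the linear pieces $S_i^{v,u}$, then $u^{N(y+p')}$ is conjugate to $u^{N(y+p'')}$. I would prove this in three steps: use the linear structure of $S_i^{v,u}$ to get one conjugacy between $u$-powers, transport it to $y$ using the pair $(x,y)$, and then remove an extraneous common factor. The only tools are the two bulleted facts about $N$ (multiplicativity, and that one conjugator also works after adding $h$ to both exponent vectors) and the Conjugacy Criterion describing $\approx^*$.

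\textbf{Step 1.} Write the base point as $c_i^{v,u}=(c',c'')$. Both $c_i^{v,u}$ and $c_i^{v,u}+p$ lie in $S_i^{v,u}\subseteq R_{v,u}$. This gives $v^{N(c')}\sim u^{N(c'')}$ and $v^{N(c'+p')}\sim u^{N(c''+p'')}$. Apply the second bullet to the first relation with $h=p'$: the same conjugator takes $v^{N(c'+p')}$ to $u^{N(c''+p')}$. Transitivity then gives
\[u^{N(c''+p')}\sim u^{N(c''+p'')}.\]

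\textbf{Step 2.} Bring in $(x,y)$. Apply the second bullet to $v^{N(x)}\sim u^{N(y)}$ with $h=c'$, and to $v^{N(c')}\sim u^{N(c'')}$ with $h=x$. Chaining these yields $u^{N(y+c')}\sim u^{N(x+c'')}$. Now shift this relation and the Step 1 relation by suitable vectors ($p'$, $p''$, and $x$) and chain them. The result is
\[u^{N(y+c'+p')}\sim u^{N(y+c'+p'')}.\]
This is the claim up to the extra summand $c'$.

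\textbf{Step 3: the cancellation, which is the main obstacle.} We must pass from $u^{N(y+c'+p')}\sim u^{N(y+c'+p'')}$ to $u^{N(y+p')}\sim u^{N(y+p'')}$. Dividing out a common factor of exponents is false in general in GBS groups, since conjugacy of $u^{aN}$ and $u^{bN}$ need not imply conjugacy of $u^a$ and $u^b$. My first attempt would use the Conjugacy Criterion. By that criterion, the relation from Step 2 is witnessed by a $\approx$-chain of elementary moves $(E(A_e)+g,z_\cdot)\approx(E(B_e)+g,z_\cdot)$. Each move only requires the current exponent vector to dominate $E(A_e)$ or $E(B_e)$ coordinatewise. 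I would try to show that the added vector $c'$ is never "used" by these moves. To do so, I would choose $c_i^{v,u}$ minimal in its piece, and use that $y$, being the second coordinate of a point of $R_{v,u}$, already dominates the vectors $E(B_e)$ needed along the path from $v$ to $u$. Then the whole chain can be translated by $-c'$ while staying in $\NN^m$.

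If this domination argument fails, the fallback is to weaken the claim. One would either replace $y$ by $y+c'$, which suffices for the linear conjugator bound because it only shifts exponents by a bounded amount, or replace $p$ by a bounded multiple $kp$. Either way the constants still depend only on $\Gamma$, which is all the subsequent conjugator-length estimate needs.
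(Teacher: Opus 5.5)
\textbf{There is a genuine gap at Step 3, and it cannot be closed.} The cancellation you want is false in the generality you are attempting, namely arbitrary $(x,y)\in R_{v,u}$ with a period taken from an arbitrary linear piece.

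Take $\mathrm{BS}(2,4)=\langle v,t\mid tv^2t^{-1}=v^4\rangle$ and restrict to positive exponents (sign coordinate $0$). The only relevant prime is $2$, and $v^{2^a}\sim v^{2^b}$ iff $a=b=0$ or $a,b\geq 1$. Hence $R_{v,v}=\{(0,0)\}\cup\{(a,b):a,b\geq 1\}$. Any decomposition into linear sets must contain a piece with a period $p=(\alpha,0)$, $\alpha\geq 1$, in order to cover the points $(a,1)$ as $a\to\infty$. Now take $(x,y)=(0,0)$, which just says $v\sim v$. Then $(y,y)+p=(\alpha,0)\notin R_{v,v}$, because $v^{2^\alpha}\not\sim v$.

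In your notation, use the piece $(1,1)+\NN(1,0)+\NN(0,1)$, so $c'=c''=1$ and $p=(1,0)$. Your Steps 1 and 2 correctly give $v^4\sim v^2$, but Step 3 would need $v^2\sim v$. The domination heuristic also fails here: $y=0$ dominates neither $E(2)$ nor $E(4)$, and $c$ is already minimal in its piece.

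\textbf{What the paper's proof actually uses.} When it says $(x,y)+p_{i,j}^{v,u}\in R_{v,u}$ holds ``automatically,'' it is tacitly assuming that $(x,y)$ lies in the same linear piece $S_i^{v,u}$ as the period. In that case $(x,y)+p_{i,j}^{v,u}\in S_i^{v,u}\subseteq R_{v,u}$ for free. One then shifts $v^{N(x)}\sim u^{N(y)}$ by $h=\pi$ using the second bullet and chains the two conjugacies.

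This is exactly your Step 1, run with $(x,y)$ in place of $c_i^{v,u}$. It is also all that is needed afterwards: the constant $C$ is defined using only the instance $(x,y)=c_i^{v,u}=(\gamma_i^{v,u},(\gamma_i^{v,u})')$, which your Step 1 already establishes.

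\textbf{How to fix your write-up.}
State the claim for $(x,y)\in S_i^{v,u}$ and keep Step 1 as the proof. Discard Steps 2--3 and the fallbacks. The ``$y+c'$'' variant is true, since Step 2 proves it, but it is a different statement and is unnecessary. The ``$kp$'' variant is unsupported.
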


Write $p_{i,j}^{v,u}=(\pi,\pi')$. We automatically have $(x,y)+p_{i,j}^{v,u}\in R_{v,u}$, so $v^{N(x+\pi)}$ is conjugate to $u^{N(y+\pi')}$. But, $v^{N(x+\pi)}=v^{N(x)N(\pi)}$ is conjugate to $u^{N(y)N(\pi)}=u^{N(y+\pi)}$. The claim follows from the definitions of $R_{u,u}$ and $\approx^*$.

Now we conclude our proof. Write $c_{i}^{v,u}=(\gamma_i^{v,u},(\gamma_i^{v,u})')$ and $p_{i,j}^{v,u}=(\pi_{i,j}^{v,u},(\pi_{i,j}^{v,u})')$, where $(\gamma_i^{v,u})'$ and $(\pi_{i,j}^{v,u})'$ denote specific elements of $\NN^m$ (we separate out the prime solely for clarity's sake). Let $C$ be the maximal length of a minimal conjugator taking any $$v^{N(\gamma_i^{v,u})} \text{  to }u^{N((\gamma_i^{v,u})')}$$ or $$u^{N((\gamma_i^{v,u})'+\pi_{i,j}^{v,u})}\text{  to }u^{N((\gamma_i^{v,u})'+(\pi_{i,j}^{v,u})')}.$$ Note that a conjugator of the second sort exists by our Claim. Let $v^k$ and $u^\ell$ be conjugate non-trivial elements, so $(E(k),E(\ell))\in R_{v,u}$, and without loss of generality suppose $(E(k),E(\ell))\in S_1^{v,u}$. For brevity we abbreviate $c_1^{v,u}$ by $c=(\gamma,\gamma')$ and $p_{1,j}^{v,u}$ by $p_j=(\pi_j,\pi_j')$. Semilinearity gives $(E(k),E(\ell))=c+\sum_j t_j p_j$, so $E(k)=\gamma+\sum_j t_j\pi_j, E(\ell)=\gamma'+\sum_jt_j\pi_j'$. There exists a conjugator of length at most $C$ taking $v^k=v^{r_k N(\gamma)N(\sum_j t_j\pi_j)}$ to $u^{r_k N(\gamma')N(\sum_j t_j\pi_j)}$. Similarly, there exist $\sum_j t_j$ many conjugators of length at most $C$ whose product takes $u^{r_k N(\gamma')N(\sum_j t_j\pi_j)}$ to $u^{r_k N(\gamma')N(\sum_j t_j\pi_j')}=u^\ell$ (the length bound on these conjugators follows from the second fact we noted about $N$ and the second displayed segment of the definition of $C$). Since we are taking a sum of non-zero vectors of natural numbers, $\sum_jt_j$ is at most the sum of all entries of $E(k)+E(\ell)$, which Wei$\ss$ noted is logarithmic in $k+\ell$ \cite{Weiss}. The conjugator length bound then then follows by Lemma \ref{AppLemma}.

\end{proof}
\bibliographystyle{alpha}
\bibliography{bib}

@article{Levitt2007GBS,
  author  = {Levitt, Gilbert},
  title   = {On the automorphism group of generalized {B}aumslag--{S}olitar groups},
  journal = {Geom. Topol.},
  fjournal = {Geometry \& Topology},
  volume  = {11},
  number  = {1},
  pages   = {473--515},
  year    = {2007},
  doi     = {10.2140/gt.2007.11.473},
  eprint  = {math/0511083},
  archivePrefix = {arXiv},
  primaryClass  = {math.GR},
  issn    = {1465-3060},
}

@article{GinsburgSpanier1966,
  author    = {Seymour Ginsburg and Edwin H. Spanier},
  title     = {Semigroups, Presburger formulas, and languages},
  journal   = {Pacific Journal of Mathematics},
  volume    = {16},
  number    = {2},
  pages     = {285--296},
  year      = {1966},
  publisher = {Pacific Journal of Mathematics},
  doi       = {10.2140/pjm.1966.16.285},
  url       = {https://projecteuclid.org/journals/pacific-journal-of-mathematics/volume-16/issue-2/Semigroups-Presburger-formulas-and-languages/pjm/1102994974.full}
}

@article{horadam1981word,
  author     = {Horadam, K. J.},
  title      = {The word problem and related results for graph product groups},
  journal    = {Proceedings of the American Mathematical Society},
  volume     = {82},
  number     = {2},
  pages     = {157--164},
  year      = {1981},
  publisher = {American Mathematical Society},
  issn      = {0002-9939}
}

@article{bezverkhnii2016conjugacy,
  title={Conjugacy word problem in the tree product of free groups with a cyclic amalgamation},
  author={Bezverkhnii, V. N. and Logacheva, E. S.},
  journal={Diskretnaya Matematika},
  volume={28},
  number={1},
  pages={3--18},
  year={2016},
  publisher={Russian Academy of Sciences, Steklov Mathematical Institute of Russian~…}
}

@incollection {Weiss,
    AUTHOR = {Wei\ss, A.},
     TITLE = {A logspace solution to the word and conjugacy problem of
              generalized {B}aumslag-{S}olitar groups},
 BOOKTITLE = {Algebra and computer science},
    SERIES = {Contemp. Math.},
    VOLUME = {677},
     PAGES = {185--212},
 PUBLISHER = {Amer. Math. Soc., Providence, RI},
      YEAR = {2016},
      ISBN = {978-1-4704-2303-2},
   MRCLASS = {20E06 (20F10 68Q17 68R05)},
  MRNUMBER = {3589811},
MRREVIEWER = {Alejandra\ Garrido},
}

@Inbook{Gersten1992,
author="Gersten, S. M.",
editor="Baumslag, G.
and Miller, C. F.",
title="Dehn Functions and l1-norms of Finite Presentations",
bookTitle="Algorithms and Classification in Combinatorial Group Theory",
year="1992",
publisher="Springer New York",
address="New York, NY",
pages="195--224",
isbn="978-1-4613-9730-4",
doi="10.1007/978-1-4613-9730-4_9",
url="https://doi.org/10.1007/978-1-4613-9730-4_9"
}

@article{platonov2004isoparametric,
  title={An isoperimetric function of the {B}aumslag--Gersten group},
  author={Platonov, A. N.},
  journal={Vestnik Moskovskogo Universiteta. Seriya 1. Matematika. Mekhanika},
  number={3},
  pages={12--17},
  year={2004},
  publisher={Lomonosov Moscow State University}
}

@article{BridsonRileySaleMMJ,
  author        = {Bridson, Martin R. and Riley, Timothy R. and Sale, Andrew W.},
  title         = {Conjugacy in a family of free-by-cyclic groups},
  journal       = {Michigan Mathematical Journal},
  year          = {2026},
  note          = {In press. Preprint available at arXiv:2506.01248},
  eprint        = {2506.01248},
  archivePrefix = {arXiv},
  primaryClass  = {math.GR},
  url           = {https://arxiv.org/abs/2506.01248}
}

@article{gillis2025conjugator,
  title={Conjugator Length in the Baumslag-Gersten Group},
  author={Gillis, C.},
  journal={arXiv preprint arXiv:2507.21505},
  year={2025}
}

@article{BAUMSLAGTAYLOR,
author = {Baumslag, G. and Taylor, T.},
journal = {Mathematische Annalen},
pages = {315-319},
title = {The Centre of Groups with One Defining Relator.},
url = {http://eudml.org/doc/161671},
volume = {175},
year = {1968},
}

@book{rotman1999introduction,
  title={An Introduction to the Theory of Groups},
  author={Rotman, J.},
  isbn={9780387942858},
  lccn={94006507},
  series={Graduate Texts in Mathematics},
  url={https://books.google.com/books?id=vb9kUfHqQigC},
  year={1999},
  publisher={Springer New York}
}

@article{HNNOriginal,
author = {Higman, G. and Neumann, B. H. and Neumann, H.},
title = {Embedding Theorems for Groups},
journal = {Journal of the London Mathematical Society},
volume = {s1-24},
number = {4},
pages = {247-254},
doi = {https://doi.org/10.1112/jlms/s1-24.4.247},
url = {https://londmathsoc.onlinelibrary.wiley.com/doi/abs/10.1112/jlms/s1-24.4.247},
eprint = {https://londmathsoc.onlinelibrary.wiley.com/doi/pdf/10.1112/jlms/s1-24.4.247},
year = {1949}
}

@book{trees,
 author = {Serre, J. P.},
 date = {1980},
 doi = {10.1007/978-3-642-61856-7},
 editor = {},
 pages = {},
 title = {Trees},
 url = {https://app.dimensions.ai/details/publication/pub.1044288604},
 year = {1980}
}

@article{kharlampovich1998hyperbolic,
  title={Hyperbolic groups and free constructions},
  author={Kharlampovich, O. and Myasnikov, A.},
  journal={Transactions of the American Mathematical Society},
  volume={350},
  number={2},
  pages={571--613},
  year={1998}
}

@incollection{Bridson6,
	author = {M. R. Bridson},
	booktitle = {Invitations to Geometry and Topology},
	editor = {M. R. Bridson and S. M. Salamon},
	pages = {33-94},
	publisher = {O.U.P.},
	title = {The geometry of the word problem},
	year = {2002}}

@article{Larsen_1977, title={The conjugacy problem and cyclic HNN constructions}, volume={23}, DOI={10.1017/S1446788700019546}, number={4}, journal={Journal of the Australian Mathematical Society}, author={Larsen, L.}, year={1977}, pages={385–401}}

@article{Pietrowski1974,
author = {Pietrowski, A.},
journal = {Mathematische Zeitschrift},
pages = {95-106},
title = {The Isomorphism Problem for One-relator Groups with Non-trivial Centre.},
url = {http://eudml.org/doc/172028},
volume = {136},
year = {1974},
}

@book{lyndon2001combinatorial,
  title={Combinatorial Group Theory},
  author={Lyndon, R. and Schupp, P.},
  isbn={9783540411581},
  lccn={76012537},
  series={Classics in Mathematics},
  url={https://books.google.com/books?id=aiPVBygHi\_oC},
  year={2001},
  publisher={Springer Berlin Heidelberg}
}

@misc{dani2024fractional,
      title={Fractional distortion in hyperbolic groups}, 
      author={P. Dani and T. Riley},
      year={2024},
      eprint={2403.08645},
      archivePrefix={arXiv},
      primaryClass={math.GR}
}

@book{Magnus_Karrass_Solitar, place={Mineola, N.Y}, title={Combinatorial group theory: Presentations of groups in terms of generators and relations}, publisher={Dover Publ}, author={Magnus, W. and Karrass, A. and Solitar, D.}, year={1976}}

@misc{BRS,
      title={Conjugator length in finitely presented groups}, 
      author={Martin R. Bridson and Timothy R. Riley and Andrew W. Sale},
      year={2026},
      eprint={2607.20401},
      archivePrefix={arXiv},
      primaryClass={math.GR},
      url={https://arxiv.org/abs/2607.20401}, 
}

\ni  {Conan Gillis} \rule{0mm}{6mm} \\
Department of Mathematics, 310 Malott Hall,  Cornell University, Ithaca, NY 14853, USA 
\\ {cg527@cornell.edu}, \
\href{https://math.cornell.edu/conan-gillis}{http://www.math.cornell.edu/conan-gillis/}

\end{document}